\documentclass[11pt,a4paper]{amsart}

\usepackage{hyperref}
\usepackage{graphicx}
\usepackage{color}
\usepackage{url}
\usepackage{tikz}
\usepackage{cleveref}
\Crefname{thm}{Theorem}{Theorems}
\Crefname{conj}{Conjecture}{Conjectures}
\Crefname{que}{Question}{Questions}

\usepackage{pinlabel}
\usetikzlibrary{calc}
\usetikzlibrary{shapes.geometric}

\textwidth=14cm
\hoffset=-0.5cm
\pagestyle{headings}

\newtheorem{thm}{Theorem}
\numberwithin{thm}{section}
\numberwithin{equation}{section}
\newtheorem{cor}[thm]{Corollary}
\newtheorem{que}[thm]{Question}
\newtheorem{proposition}[thm]{Proposition}
\newtheorem{algorithm}[thm]{Algorithm}
\newtheorem{lemma}[thm]{Lemma}
\newtheorem{conj}[thm]{Conjecture}

\newcommand{\R}{\mathbf{R}}

\newcommand{\Z}{\mathbf{Z}}
\newcommand{\calF}{\mathcal{F}}
\newcommand{\Mod}{\operatorname{Mod}}

\begin{document}
\title{Minimal pseudo-Anosov stretch factors on nonoriented surfaces}
\author{Livio Liechti}
\address{Department of Mathematics, University of Fribourg, Ch.\ du Mus\'ee 23, \newline1700 Fribourg, Switzerland}
\email{livio.liechti@unifr.ch}
\author{Bal\'azs Strenner}
\address{Georgia Institute of Technology, School of Mathematics, Atlanta GA 30332, USA}
\email{strenner@math.gatech.edu}

\begin{abstract}
  We determine the smallest stretch factor among
  pseudo-Anosov maps with an orientable invariant foliation on the closed
  nonorientable surfaces of genus 4, 5, 6, 7, 8, 10, 12, 14, 16, 18 and 20. We
  also determine the smallest stretch factor of an orientation-reversing
  pseudo-Anosov map with orientable invariant foliations on
  the closed orientable surfaces of genus 1, 3, 5, 7, 9 and~11. As a byproduct,
  we obtain that the stretch factor of a pseudo-Anosov map on a nonorientable
  surface or an orientation-reversing pseudo-Anosov map on an orientable
  surface does not have Galois conjugates on the unit circle. This shows that
  the techniques that were used to disprove Penner's conjecture on orientable
  surfaces are ineffective in the nonorientable cases.
\end{abstract}
\maketitle

\section{Introduction}
\label{sec:intro}

Let $S$ be a surface of finite type. A homeomorphism $f$ of $S$ is
\emph{pseudo-Anosov} if there are transverse singular measured foliations
$\calF^u$ and $\calF^s$ and a real number $\lambda > 1$ such that
$f(\calF^u) = \lambda \calF^u$ and
$f(\calF^s) = \lambda^{-1} \calF^s$ \cite{Thurston88}. The
number $\lambda$ is called the \emph{stretch factor} of $f$.

Denote by $N_g$ the closed nonorientable surface of genus $g$ (the connected
sum of $g$ projective planes) and by $\delta^+(N_g)$ the minimal stretch factor
among pseudo-Anosov homeomorphisms of $N_g$ with an orientable invariant
foliation. (Only one of the foliations can be orientable, otherwise the surface
would have to be orientable as well.) The number $\delta^+(N_g)$ exists,
because for any surface $S$, the set of pseudo-Anosov stretch factors on $S$ is
a discrete set \cite{ArnouxYoccoz81, Ivanov88}.

\begin{thm}\label{theorem:stretch_factors_nonor}
  The values and minimal polynomials of $\delta^+(N_g)$ for $g=4$, $5$, $6$,
  $7$, $8$, $10$, $12$, $14$, $16$, $18$ and $20$ are as follows:
  \begin{center}
    \begin{tabular}{c|c|c|c}
      $g$ & $\delta^+(N_g)\approx$ & Minimal polynomial of
      $\delta^+(N_g)$ & singularity type\\
      \hline
      4 & 1.83929 & $x^3-x^2-x-1$ & (6) \\
      5 & 1.51288 & $x^4-x^3-x^2+x-1$ & (4,4,4) \\
      6 & 1.42911 & $x^5-x^3-x^2-1$ & (10) \\
      7 & 1.42198 & $x^6-x^5-x^3+x-1$ & (4,4,4,4,4) \\
      8 & 1.28845 & $x^7-x^4-x^3-1$ & (14) \\
      10 & 1.21728 & $x^9-x^5-x^4-1$ & (18) \\
      12 & 1.17429 & $x^{11}-x^6-x^5-1$ & (22) \\
      14 & 1.14551 & $x^{13}-x^7-x^6-1$ & (26) \\
      16 & 1.12488 & $x^{15}-x^8-x^7-1$ & (30) \\
      18 & 1.10938 & $x^{17}-x^9-x^8-1$ & (34) \\
      20 & 1.09730 & $x^{19}-x^{10}-x^9-1$ & (38) \\
    \end{tabular}
  \end{center}
  The table also contains the singularity type of the minimizing pseudo-Anosov
  map. For example, (4,4,4) means that the pseudo-Anosov map has three
  4-pronged singularities.
\end{thm}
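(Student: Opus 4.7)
The plan is to establish matching lower and upper bounds on $\delta^+(N_g)$ for each genus on the list. The central tool is the orientation double cover $\pi\colon \Sigma_{g-1}\to N_g$: any pseudo-Anosov $f$ on $N_g$ with an orientable invariant foliation lifts to an orientation-reversing pseudo-Anosov $\tilde f$ on $\Sigma_{g-1}$ with two orientable invariant foliations and the same stretch factor $\lambda$. This converts the problem into one about orientation-reversing pseudo-Anosov maps on an orientable surface, where classical linear-algebraic tools apply, and simultaneously yields the second half of the main results.

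For the lower bound, the orientability of both foliations upstairs forces $\lambda$ to be an eigenvalue of $\tilde f_\ast$ on $H_1(\Sigma_{g-1};\R)$, so its minimal polynomial has degree at most $2g-2$. Because $\tilde f$ reverses orientation, $\tilde f_\ast$ reverses the algebraic intersection form, so the characteristic polynomial of $\tilde f_\ast$ is anti-reciprocal: its nonzero roots come in pairs $(\mu,-1/\mu)$. Combined with $\lambda$ being a Perron unit (monic integer coefficients, constant term $\pm 1$), this restricts the candidate minimal polynomials to a short list. I would enumerate, for each admissible singularity stratum of $N_g$ (listed via the Euler--Poincar\'e formula and the action of the deck involution on prongs), the anti-reciprocal integer polynomials of degree up to $2g-2$ whose Perron root is below the claimed value, and rule out each one by either the Perron condition, a coefficient constraint tied to the stratum, or incompatibility with the symmetry imposed by the deck involution.

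For the upper bound I would exhibit, for each $g$, an explicit pseudo-Anosov on $N_g$ realizing the claimed stretch factor, minimal polynomial, and singularity type. The uniform shape $x^{g-1}-x^{g/2}-x^{g/2-1}-1$ for even $g$, together with a single $(2g-2)$-pronged singularity, strongly suggests a single family of constructions, for instance a train track with one vertex whose transition matrix is companion-type, or Thurston's construction applied to a pair of filling multicurves whose intersection pattern grows linearly in $g$. The odd cases $g=5,7$, with their respective $(g-2)$ four-pronged singularities, are handled by ad hoc small constructions. In each case verification reduces to checking Perron--Frobenius irreducibility of the transition matrix, computing its leading eigenvalue, and reading off the stratum from the train-track vertex valences.

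The main obstacle is the lower-bound enumeration. For each $g$ one must exclude every anti-reciprocal Perron polynomial of degree at most $2g-2$ whose Perron root falls below $\delta^+(N_g)$; the small-genus cases are the hardest because the degree bound leaves room for many competitors, so finer information---parities of prong orders, cyclotomic factors dictated by the permutation of prongs under the deck involution, and the action of $\tilde f_\ast$ on singularity-supported homology---is needed to prune the list. Packaging this into a clean argument, uniform for the infinite family of even $g$ and case by case for the small and odd $g$, is the bulk of the technical work.
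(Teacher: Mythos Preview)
Your high-level strategy---construct explicit examples for the upper bound, enumerate candidate polynomials for the lower bound---matches the paper, but the execution diverges in ways that matter.

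For the lower bound, you pass to the orientation double cover $\Sigma_{g-1}$ and work with the skew-reciprocal characteristic polynomial of $\tilde f_\ast$ on $H_1(\Sigma_{g-1};\R)$, of degree $2g-2$. The paper instead works directly with the action of $\psi^*$ on $H^1(N_g;\R)$, which has dimension $g-1$. This halves the degree, and more importantly the resulting degree-$(g-1)$ polynomial carries two constraints you do not have access to on the double cover: it is \emph{not} reciprocal or skew-reciprocal (since $\pm\lambda^{-1}$ is never a root), and it \emph{is} reciprocal mod $2$. The paper singles out the reciprocal-mod-$2$ condition as the decisive filter; without it the enumeration does not close, and with it no stratum or Lefschetz arguments are needed at all for the listed genera. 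Your plan goes in the opposite direction, relying on exactly the Lefschetz/stratum pruning that the paper found both unnecessary (for even $g$) and insufficient (for odd $g$, where it leaves stubborn survivors). So your enumeration, as described, is unlikely to terminate cleanly.

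You also understate the computational side. The paper's enumeration is computer-assisted for every $g\ge 5$, using Newton power-sum bounds (sharpened to exploit that all roots except $\lambda$ lie strictly inside the annulus $\lambda^{-1}<|z|<\lambda$) to cut the search to a feasible size; the $g=20$ case still required a day on a cluster. A hand argument is given only for $g=4$.

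For the upper bound, the paper's construction is explicit and uniform: on a rotationally symmetric nonorientable surface $\Sigma_{n,k}$ built from a crosscapped disk with $n$ twisted strips, take $f_{n,k}=r\circ T_{c_1}$, the composition of one Dehn twist with a finite-order rotation. Its $n$th power is a Penner product, and its transition matrix on the cone of measures is the companion matrix of $x^n-x^{n-r}-\cdots-x^r-1$. The even-$g$ entries in the table are $f_{g-1,2}$; the $g=5,7$ entries are $f_{6,3}$ and $f_{10,5}$. Your sketch (``companion-type train track or Thurston's construction'') points in the right direction but does not yet produce the maps or verify the strata; the paper's rotation-plus-twist description is what makes the singularity counts and stretch factors immediate.
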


Based on this result, we conjecture the following.

\begin{conj}\label{conj:nonor-polynomials}
  For all $k \ge 2$, $\delta^+(N_{2k})$ is the largest root of
  \begin{displaymath}
    x^{2k-1}-x^k-x^{k-1}-1.
  \end{displaymath}
\end{conj}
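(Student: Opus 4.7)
The plan is to establish matching upper and lower bounds on $\delta^+(N_{2k})$. For the upper bound, I would produce, for each $k \ge 2$, an explicit pseudo-Anosov homeomorphism $f_k$ of $N_{2k}$ with an orientable invariant foliation, a single $(4k-2)$-pronged singularity, and stretch factor equal to the largest root of $p_k(x) := x^{2k-1} - x^k - x^{k-1} - 1$. Every even-genus entry of \Cref{theorem:stretch_factors_nonor} already fits this pattern, so the goal is a uniform construction. The natural approach is to find a stabilizing family of train tracks on $N_{2k}$, each carrying an orientable foliation with a single $(4k-2)$-pronged singularity, together with an explicit sequence of folds (or an explicit product of Dehn twists) whose transition matrix has characteristic polynomial $p_k(x)$. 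The factorization $p_k(x) = x^{k-1}(x^k - x - 1) - 1$, in which $x^k - x - 1$ is the minimal polynomial of the smallest Pisot number of degree $k$, suggests that the relevant recursion on the train track branches should have a two-step shape reminiscent of that family.

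For the lower bound, the Euler--Poincar\'e formula on $N_{2k}$ gives $\sum_i (p_i - 2) = 4k - 4$ for the prong orders $p_i$ of the invariant foliations, and orientability forces every $p_i$ to be even and at least $4$. This leaves only finitely many singularity profiles for each $k$. Passing to the orientation double cover, which is $\Sigma_{2k-1}$ with deck involution $\tau$, any lift $\tilde f_k$ is orientation-preserving and commutes with $\tau$, so its action on $H_1(\Sigma_{2k-1}; \R)$ preserves the splitting $V_+ \oplus V_-$ into $\pm 1$ eigenspaces of $\tau_*$, each of dimension $2k - 1$. The class dual to the orientable foliation pulled up to $\Sigma_{2k-1}$ lies in $V_-$, because $\tau$ reverses its transverse orientation; hence $\lambda = \delta^+(N_{2k})$ is an eigenvalue of $\tilde f_{k*}$ on $V_-$, and its minimal polynomial has degree at most $2k - 1$. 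For each singularity profile I would combine this degree bound with the byproduct from the abstract stating that $\lambda$ has no Galois conjugate on the unit circle to obtain Mahler-measure-type lower bounds on $\lambda$, and then rule out every profile other than $(4k-2)$.

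The main obstacle is asymptotic. As $k$ grows, the largest root of $p_k$ tends to $1$, so the lower bounds one needs on $\lambda$ for competing profiles must tighten in a $k$-uniform way. For any single $k$ one can enumerate the $O(k)$ admissible profiles and handle them by direct computation, as is implicit in the proof of \Cref{theorem:stretch_factors_nonor}, but an infinite-family statement demands a sharper tool. I expect this to require combining Salem--Boyd-type estimates for the leading roots of the recursively defined family $p_k$ with the parity constraint coming from the $V_\pm$ decomposition of $H_1(\Sigma_{2k-1}; \R)$, packaged into a single algebraic inequality whose truth must be checked uniformly in $k$. Crafting that inequality and verifying it is, in my view, the crucial and most delicate step.
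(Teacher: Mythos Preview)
The statement you are trying to prove is \emph{Conjecture}~1.2 in the paper, not a theorem; the paper offers no proof of it and explicitly leaves it open. What the paper does prove is the finite list of cases in \Cref{theorem:stretch_factors_nonor} (corresponding to $k=2,\dots,10$), and the method there is a computer-assisted brute-force elimination of candidate polynomials (\Cref{alg}) that by its nature cannot extend to all $k$. So there is no ``paper's own proof'' to compare against.

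On the substance of your plan: the upper-bound half is already done in the paper. The family $f_{n,k}$ of \Cref{sec:construction-nonor}, specialized to $n=2k-1$ and $k=2$ (in the paper's notation), gives for every $k\ge 2$ a pseudo-Anosov map on $N_{2k}$ with an orientable invariant foliation, a single $(4k-2)$-pronged singularity, and stretch factor the largest root of $x^{2k-1}-x^k-x^{k-1}-1$ (see \Cref{prop:nonor-defining-poly,prop:singularity-type-nonor,prop:nonor-surface-type}). You need not reinvent this.

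The lower-bound half is where the conjecture lives, and here your proposal is a research outline rather than a proof. Your $V_+\oplus V_-$ decomposition and the degree bound $\deg(\lambda)\le 2k-1$ are correct (this is essentially \Cref{prop:nonor-poly-properties}), and the paper's \Cref{thm:Galois-nonor} does exclude conjugates on the unit circle. But ``Mahler-measure-type lower bounds'' and ``Salem--Boyd-type estimates'' are placeholders, not arguments: the known uniform lower bounds on Mahler measure for non-reciprocal polynomials (Smyth's theorem and its refinements) are far too weak to pin down the leading root to the precision required here, and you give no mechanism for ruling out the many competing polynomials that the paper's algorithm eliminates one by one. You yourself flag this as ``the crucial and most delicate step,'' which is accurate---it is the entire content of the conjecture.
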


We think that the minimal stretch factors in the genus 9 and 11 are as follows.
We will discuss supporting evidence in \Cref{sec:final-section}.

\begin{conj}\label{conj:nine_eleven}
  The approximate values and minimal polynomials of $\delta^+(N_g)$ for $g=9,11$ are as follows:
  \begin{center}
    \begin{tabular}{c|c|c|c}
      $g$ & $\delta^+(N_g)\approx$ & Minimal polynomial of
      $\delta^+(N_g)$ & singularity type\\
      \hline
      9 & 1.35680 & $x^8 - x^5 - x^4 - x^3 - 1$ & (16) \\
      11 & 1.22262 & $\frac{x^{12} - x^{7} - x^{6} - x^{5} - 1}{x^{2}
                              + x + 1}$ & (8,8,8)\\
    \end{tabular}
  \end{center}
\end{conj}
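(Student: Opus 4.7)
The plan is to establish matching upper and lower bounds for each of the two cases, mirroring the strategy behind \Cref{theorem:stretch_factors_nonor}. For the upper bound, I would exhibit explicit pseudo-Anosov maps with the claimed stretch factors and singularity data. Both prescriptions are consistent with the Euler--Poincar\'e relation: $2\chi(N_9)=-14$ is accounted for by a single $16$-pronged singularity, and $2\chi(N_{11})=-18$ by three $8$-pronged singularities. A natural source of such examples is the orientation double cover $N_g \to \Sigma_{g-1}$, which turns a pseudo-Anosov on $N_g$ with orientable invariant foliation into an orientation-reversing pseudo-Anosov on $\Sigma_{g-1}$; equivalently, one builds a train track on $N_g$ whose Perron--Frobenius transition matrix has the prescribed characteristic polynomial and whose combinatorics realizes the required singularity type.

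For the lower bound, I would enumerate all candidate stretch factors below the conjectured minimum. Since the invariant foliation is orientable, $\delta^+(N_g)$ is a Perron number whose minimal polynomial divides the characteristic polynomial of the action on a concrete integer summand of $H_1(\Sigma_{g-1};\Z)$, split according to the eigenvalues $\pm 1$ of the deck transformation; the degree of this polynomial is controlled by $g$ and by the singularity data via the Euler--Poincar\'e formula. The byproduct of the main theorem---the absence of Galois conjugates on the unit circle---further restricts the admissible polynomials. The task is then to list all Perron numbers arising in this way that lie strictly below the conjectured value and exclude each one, either on algebraic grounds (Perron root too large or too small, forbidden Galois behavior, wrong parity) or by a topological argument showing that no pseudo-Anosov on $N_g$ in the correct stratum can realize it.

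The main obstacle, and presumably the reason these two genera remain conjectural while the neighboring ones in \Cref{theorem:stretch_factors_nonor} are fully settled, lies in the lower bound. For $g=11$ the singularity type $(8,8,8)$ involves nontrivial permutation data on three singularities, so the homological representation may be reducible: indeed $x^{12}-x^7-x^6-x^5-1$ is divisible by the cyclotomic factor $x^2+x+1$, and the conjectured minimal polynomial is the complementary quotient. This both enlarges the space of candidate polynomials and weakens the algebraic invariants used to separate them, because one must keep track of the cyclotomic factor coming from a possible $3$-cycle of singularities. For $g=9$ the difficulty is of a similar flavor but milder: ruling out the handful of Perron numbers of degree at most $8$ that lie just below $1.35680$ appears to require either a finer stratum-aware invariant (for instance, information coming from a twisted coefficient system sensitive to the singularity structure) or a computer-assisted enumeration at a scale notably beyond what suffices for the even genera. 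I expect that sharpening either of these two tools is precisely where a proof of \Cref{conj:nine_eleven} would have to live.
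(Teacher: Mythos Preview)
This statement is a \emph{conjecture}, and the paper does not prove it. What the paper does provide is (i) explicit examples realizing the upper bounds and (ii) a precise description of the obstruction to the lower bounds. Your proposal correctly identifies the two-sided strategy, but it misidentifies both the construction of the examples and, more seriously, the nature of the obstruction.

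For the upper bound, the paper does not pass through the orientation double cover at all; the examples come directly from the family $f_{n,k} = r \circ T_{c_1}$ of \Cref{sec:construction-nonor} (specifically $f_{8,3}$ for $g=9$ and $f_{12,3}$ for $g=11$, see \Cref{cor:construction-nonor}). Your description of the double cover is also inverted: one has $S_{g-1} \to N_g$, and the lift of a pseudo-Anosov on $N_g$ is orientation-\emph{preserving} on $S_{g-1}$, not orientation-reversing.

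The more important gap is your diagnosis of the lower-bound difficulty. You attribute the problem for $g=11$ to the cyclotomic factor $x^2+x+1$ in the conjectured characteristic polynomial, arising from a $3$-cycle on the singularities of type $(8,8,8)$. That is not where the paper gets stuck. After running \Cref{alg} and the Lefschetz tests, the uneliminated candidates (\Cref{tab:noLefschetz}) are
\[
(x^7-x^4-x^3-1)(x-1),\quad (x^7-x^5-x^2-1)(x-1)\quad\text{for }g=9,
\]
\[
(x^9-x^5-x^4-1)(x-1)\quad\text{for }g=11,
\]
each corresponding to strata $(4^7)$ or $(4^9)$. These are products of a polynomial already appearing in a \emph{lower} genus with the cyclotomic factor $x-1$, and the paper explicitly says it does not know how to rule these out. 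So the difficulty is not reducibility caused by the conjectured singularity permutation; it is the possibility that a lower-genus stretch factor reappears in a higher genus through a different stratum. Any argument settling \Cref{conj:nine_eleven} would have to exclude these specific survivors, which is a rather different problem from the one you describe.
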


For our second main result, denote by $\delta^+_{rev}(S_g)$ the minimal stretch
factor among orientation-reversing pseudo-Anosov homeomorphisms of the closed
orientable surface~$S_g$ of genus $g$ that have orientable invariant
foliations.

\begin{thm}\label{theorem:stretch_factors_rev}
  The values and minimal polynomials of $\delta^+_{rev}(S_g)$ for
  $g=1$, $3$, $5$, $7$, $9$ and $11$ are as follows:
  \begin{center}
    \begin{tabular}{c|c|c|c}
      $g$ & $\delta^+_{rev}(S_g)\approx$ & Minimal polynomial of
      $\delta^+_{rev}(S_g)$ & singularity type\\
      \hline
      1 & 1.61803 & $x^2-x-1$ & no singularities \\
      3 & 1.25207 & $\frac{x^8-x^5-x^3-1}{x^2+1}$ & (4,4,4,4) \\
      5 & 1.15973 & $\frac{x^{12}-x^7-x^5-1}{x^2+1}$ & (6,6,6,6) \\
      7 & 1.11707 & $\frac{x^{16}-x^9-x^7-1}{x^2+1}$ & (8,8,8,8) \\
      9 & 1.09244 & $\frac{x^{20}-x^{11}-x^9-1}{x^2+1}$ & (10,10,10,10) \\
      11 & 1.07638 & $\frac{x^{24} - x^{13} - x^{11} - 1}{x^2+1}$ & (12,12,12,12) \\
    \end{tabular}
  \end{center}
  Moreover, we have
  \begin{displaymath}
    \delta^+_{rev}(S_g) \ge \delta^+_{rev}(S_{g-1})
  \end{displaymath}
  for $g=2, 4, 6, 8$ and $10$.
\end{thm}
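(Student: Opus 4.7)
\emph{Overall strategy.} The theorem splits into two kinds of statements: exact equalities $\delta^+_{rev}(S_g) = \lambda_g$ for odd $g \in \{1,3,5,7,9,11\}$, and the monotonicity inequalities $\delta^+_{rev}(S_g) \ge \delta^+_{rev}(S_{g-1})$ for $g \in \{2,4,6,8,10\}$. For each equality I need an upper bound coming from an explicit construction of a minimizing pseudo-Anosov and a matching lower bound ruling out every smaller candidate. I would treat these three ingredients (upper bound, lower bound, monotonicity) separately.

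\emph{Upper bounds.} I would first exhibit, for each odd $g$, an orientation-reversing pseudo-Anosov on $S_g$ with orientable invariant foliations and stretch factor equal to the table's value. For $g=1$, the linear map of $T^2$ associated to $\begin{pmatrix}1 & 1\\1 & 0\end{pmatrix}$ has determinant $-1$, reverses orientation, and has stretch factor the golden ratio. For $g\ge 3$, the presence of the factor $x^2+1$ in the denominator of the minimal polynomial and the symmetric singularity data $(2k,2k,2k,2k)$ suggest a uniform construction with an order-four symmetry. A natural source is to start from a pseudo-Anosov on the nonorientable surface $N_{g+1}$ appearing in \Cref{theorem:stretch_factors_nonor}, lift through the orientation double cover to $S_g$, and verify that the lift reverses orientation and preserves orientable foliations; the additional $\Z/4$-symmetry should account for the extra $x^2+1$ factor visible in the characteristic polynomial on $H_1$.

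\emph{Lower bounds.} This is the main work. Orientability of $\calF^u$ forces $\lambda$ to be an eigenvalue of $f_*$ acting on $H_1(S_g;\R)$, and orientation-reversal forces $\det f_* = -1$. The crucial algebraic input — announced in the abstract as a byproduct — is that $\lambda$ has no Galois conjugates on the unit circle, so no cyclotomic factor of the characteristic polynomial $\chi_{f_*}(x)\in\Z[x]$ can appear other than those forced by a finite-order symmetry. Together with a Perron-type upper bound on the largest root coming from the height of $\chi_{f_*}$, this should produce a finite, tractable list of candidate polynomials of degree at most $2g$ and determinant $-1$ whose largest real root lies below the claimed minimum. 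I would then rule out each candidate by combining the Euler--Poincar\'e constraint on admissible singularity types with an inspection of which integer matrices of that shape are actually realized by a pseudo-Anosov carried by a compatible train track.

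\emph{Even genus inequality and main obstacle.} For the comparison $\delta^+_{rev}(S_g) \ge \delta^+_{rev}(S_{g-1})$ in even genus, the plan is to associate to any candidate orientation-reversing pseudo-Anosov $f$ on $S_g$ an orientation-reversing pseudo-Anosov on $S_{g-1}$ with orientable foliations whose stretch factor is no larger. I would try this by moving along a fibered face of the Thurston norm ball of the mapping torus of $f$ — using Fried's theorem on continuity and convexity of normalized entropy — to reach an adjacent integral fiber of genus $g-1$, or, failing that, by a direct surgery that destroys one handle fixed by the dynamics. The hardest step I anticipate is completeness of the lower-bound enumeration: even with the Galois-conjugate restriction, ensuring that no small-height integer polynomial of the correct shape has a root between $1$ and the claimed $\lambda_g$ will require careful algebraic estimates, and matching each surviving polynomial to an actual pseudo-Anosov on the right surface is where the combinatorial bookkeeping will concentrate.
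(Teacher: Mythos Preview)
Your overall decomposition into upper bound, lower bound, and monotonicity is right, but each of the three pieces has a concrete problem.

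\textbf{Upper bound.} Lifting the minimizers on $N_{g+1}$ through the orientation double cover cannot produce the examples in the table: for instance $\delta^+(N_4)\approx 1.84$ while $\delta^+_{rev}(S_3)\approx 1.25$. The stretch factor is preserved under covering, so any lift of a nonorientable minimizer is far too large. The paper instead builds the examples directly on $S_g$: a cycle of $2k$ annuli with core curves $c_1,\ldots,c_{2k}$, and the map $\psi_k = r^{k-1}\circ T_{c_1}$ for a suitable rotation $r$. A power of $\psi_k$ is a Penner product, and the transition matrix on the cone of measures is the companion matrix of $x^{2k}-x^{k+1}-x^{k-1}-1$.

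\textbf{Lower bound.} The Galois-conjugate-off-the-circle fact is a \emph{consequence}, not the engine, of the argument. The structural constraint you are missing is that an orientation-reversing map acts \emph{anti-symplectically} on $H_1(S_g)$, which forces the characteristic polynomial to be skew-reciprocal: $p(x) = (-1)^g x^{2g} p(-x^{-1})$, with constant term $(-1)^g$ and with $-\lambda^{-1}$ (not $\lambda^{-1}$) a root. Combined with the Perron bound $|\mu|\in(\lambda^{-1},\lambda)$ on the remaining roots, this cuts the search space enough that a Lanneau--Thiffeault style enumeration via Newton power-sum bounds finishes by computer, leaving a single polynomial in each case. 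No train-track realizability or singularity-type analysis is needed; the paper emphasizes that this is precisely where the nonorientable/reversing setting is cleaner than the classical one.

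\textbf{Even genus inequality.} There is no need for fibered-face deformation or surgery. Running the same polynomial enumeration in genus $g$ with upper bound $r=\delta^+_{rev}(S_{g-1})+\varepsilon$ yields (for $g=2,4,6,8,10$) exactly one surviving polynomial, and its relevant root is $\delta^+_{rev}(S_{g-1})$ itself. Hence any orientation-reversing pseudo-Anosov on $S_g$ with orientable foliations and stretch factor below that value is impossible, which is the inequality. Your proposed descent $S_g\to S_{g-1}$ would, if it worked, prove monotonicity for \emph{all} $g$, contradicting \Cref{conj:zigzag}.
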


Based on these results, it is natural to conjecture the following.

\begin{conj}\label{conj:rev-polynomials}
  For all $k \ge 2$, $\delta^+_{rev}(S_{2k-1})$ is the largest root of
  \begin{displaymath}
    x^{4k}-x^{2k+1}-x^{2k-1}-1.
  \end{displaymath}
\end{conj}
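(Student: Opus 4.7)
The plan is to prove the conjecture in two parts: an upper bound via an explicit construction realizing the conjectured stretch factor, and a matching lower bound ruling out anything smaller. For the upper bound, the cases $k = 2, \ldots, 6$ already handled by \Cref{theorem:stretch_factors_rev} strongly suggest a uniform construction, since the minimizer in every case has singularity type $(2k, 2k, 2k, 2k)$ and a minimal polynomial of the same form. The four equal singularities point to a $\mathbb{Z}/4$-symmetric model, most likely a polygonal or Penner-type train track on which an orientation-reversing involution cyclically permutes the four singularities. My first step would be to extract such a symmetric model from the minimizers of the small cases, then verify by direct computation that the Perron--Frobenius transition matrix has characteristic polynomial $x^{4k}-x^{2k+1}-x^{2k-1}-1$ times a cyclotomic factor, and that the quotient by $x^2+1$ gives the stretch factor's minimal polynomial. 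Checking orientation-reversal and orientability of the invariant foliations would be done by tracking the induced action on tangent directions along the train track.

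For the lower bound, I would lean on the algebraic constraints highlighted in the abstract: the stretch factor $\lambda$ of an orientation-reversing pA with orientable foliations on $S_g$ is a weak Perron number whose Galois conjugates avoid the unit circle, and the corresponding minimal polynomial is a signed reciprocal polynomial of degree at most $2g = 4k-2$ with sign data prescribed by orientation-reversal. Combining these restrictions with the Gauss--Bonnet bound $\sum(p_i-2) = 8k-8$ on the prong structure reduces the search to a finite list of admissible strata and a controlled family of candidate polynomials. The strategy is to show that in every such stratum, the smallest admissible largest root is at least the root of $x^{4k}-x^{2k+1}-x^{2k-1}-1$, and that equality is attained only in the $(2k,2k,2k,2k)$ stratum. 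The monotonicity statement in \Cref{theorem:stretch_factors_rev} suggests the induction step could be organized so that larger-genus strata are excluded by comparison with smaller genera.

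The hard part will be the lower bound uniformly in $k$. For $k \le 6$ the admissible polynomials and strata can be enumerated by computer, but for the asymptotic regime the gap between generic Lehmer-type lower bounds for signed reciprocal polynomials and the conjectured minimum is narrow: the largest root of $x^{4k}-x^{2k+1}-x^{2k-1}-1$ tends to $1$ like $1+\tfrac{\log k}{k}$, so a sharp argument must exploit the \emph{specific} coefficient pattern, not merely the degree and sign. My best guess is that one needs a substitution $y = \lambda + \lambda^{-1}$ (using the reciprocal symmetry) to reduce to a real-rooted polynomial of degree $2k-1$, and then combine Perron--Frobenius interlacing with the Galois-conjugate restriction to force the sub-leading coefficients into the exact configuration appearing in the conjecture. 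The main danger is that other candidate polynomials passing the homological and stratum tests produce a slightly smaller root: ruling them out will likely require a genuinely new algebraic input beyond what suffices for the tabulated cases.
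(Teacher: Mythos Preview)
The statement you are addressing is \emph{Conjecture}~\ref{conj:rev-polynomials}, and the paper does not prove it. What the paper does establish is the upper bound for all $k\ge 2$, via the explicit family $\psi_k$ of \Cref{sec:construction-rev-odd}; this is precisely the rotationally symmetric Penner-type construction you anticipate, with four boundary components collapsing to the four $2k$-pronged singularities. The matching lower bound is obtained only for $k=2,\ldots,6$, by the computer-assisted polynomial elimination of \Cref{alg} and \Cref{prop:elimination-reversing}. For general $k$ the lower bound is left open, and that is why the statement is recorded as a conjecture.

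Your proposal is therefore a research outline for an open problem rather than a proof, and the genuine gap is exactly where you place it: a uniform-in-$k$ lower bound. Two of the specific tools you reach for would need correction before they could help. First, by \Cref{prop:reversing-poly-properties} the characteristic polynomial on $H_1(S_g)$ is \emph{skew}-reciprocal, $p(x)=(-1)^g x^{2g}p(-x^{-1})$, so roots are paired as $\mu\leftrightarrow -\mu^{-1}$; the substitution $y=\lambda+\lambda^{-1}$ exploits ordinary reciprocity and does not apply here. Second, your asymptotic $\lambda\sim 1+\tfrac{\log k}{k}$ is off: the paper's own heuristic (\Cref{conj:rev-limit}) gives $\lambda^{g}\to 1+\sqrt{2}$, hence $\lambda\sim 1+\tfrac{c}{k}$ for a constant $c$, with no logarithmic factor. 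Finally, the paper offers no stratum-by-stratum or inductive mechanism of the kind you sketch; its lower-bound technology is purely enumerative and already strains at $g=11$, which is why nothing beyond the tabulated cases is claimed.
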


\Cref{theorem:stretch_factors_rev} shows that $\delta^+_{rev}(S_g)$ fails to be
strictly decreasing at every other step for small values of $g$. We conjecture
that in fact the value of $\delta^+_{rev}(S_g)$ strictly increases at every
other step. We will discuss evidence for this after
\Cref{prop:elimination-reversing}.

\begin{conj}\label{conj:zigzag}
  For all $k\ge 1$, we have
  \begin{displaymath}
    \delta^+_{rev}(S_{2k}) > \delta^+_{rev}(S_{2k-1}).
  \end{displaymath}
\end{conj}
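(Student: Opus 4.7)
The plan is to strengthen the weak inequality in \Cref{theorem:stretch_factors_rev} to a strict inequality and extend it to all $k \ge 1$ by combining the algebraic constraints on the homological action with a polynomial elimination scheme. Fix $k \ge 1$ and let $f$ be an orientation-reversing pseudo-Anosov on $S_{2k}$ with orientable invariant foliations and stretch factor $\lambda$. Since $f^*\omega = -\omega$ for the intersection form $\omega$, the matrix $M$ of $f_*$ on $H_1(S_{2k};\R)$ is anti-symplectic, so its eigenvalues come in pairs $\{\mu, -1/\mu\}$. Consequently $\lambda$ and $-\lambda^{-1}$ are both eigenvalues, and the characteristic polynomial $p$ of $M$ is an antipalindromic integer polynomial of degree $4k$.

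Assume for contradiction that $\lambda \le \delta^+_{rev}(S_{2k-1})$. The first step is to enumerate all antipalindromic integer polynomials of degree $4k$ whose largest real root is bounded by $\delta^+_{rev}(S_{2k-1})$. Trace and coefficient bounds coming from this constraint, together with the Perron property of $\lambda$, confine the coefficient vector to an explicit finite window, so the list of candidates is finite for each fixed $k$. For each candidate I would then attempt to rule it out via the same elimination tools used to establish \Cref{theorem:stretch_factors_nonor,theorem:stretch_factors_rev}: (i) compatibility with an admissible singularity type on $S_{2k}$, constrained by $\sum(p_i - 2) = 8k - 4$; (ii) lower bounds of Lehmer type on the largest root; and (iii) sign-of-Galois-conjugates tests coming from the fact, proved elsewhere in the paper, that stretch factors of such pseudo-Anosov maps have no Galois conjugates on the unit circle.

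The main obstacle, which is presumably why the statement remains a conjecture, is carrying out the argument uniformly in $k$. For the remaining small cases (say $k=6,7$, filling in where \Cref{theorem:stretch_factors_rev} stops), a computer-assisted enumeration along the lines of the paper's main theorems should be feasible. For a proof in full generality one needs a rigidity statement asserting that, up to predictable factors, the only antipalindromic polynomial of degree $4k$ whose largest root is at most the largest root of $x^{4k}-x^{2k+1}-x^{2k-1}-1$ is that polynomial itself, followed by an argument showing that this exceptional polynomial cannot arise as the characteristic polynomial of the $H_1$-action of a pseudo-Anosov on $S_{2k}$ -- plausibly because its factorization through $(x^{4k}-x^{2k+1}-x^{2k-1}-1)/(x^2+1)$ forces a splitting of $H_1(S_{2k};\R)$ that matches the singularity data of $S_{2k-1}$ rather than that of $S_{2k}$. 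This rigidity-plus-realizability step is where I expect the real work of any complete proof to lie, and it also makes the result contingent on a prior resolution of \Cref{conj:rev-polynomials}, since without an explicit expression for $\delta^+_{rev}(S_{2k-1})$ there is no target polynomial against which to enumerate.
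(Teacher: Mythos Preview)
The statement you are attempting to prove is \Cref{conj:zigzag}, which is a \emph{conjecture} in the paper; the authors do not give a proof, so there is no argument in the paper to compare yours against. You clearly recognize this, since your proposal is a proof strategy rather than a proof, and you explicitly flag the step that prevents it from going through.

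Your outline is broadly compatible with the evidence the paper actually offers. The paper's support for the conjecture is \Cref{prop:elimination-reversing}: for each even genus $g=2,4,6,8,10$, the polynomial elimination algorithm, run with the upper bound $r = \delta^+_{rev}(S_{g-1})$, leaves exactly one candidate, namely the polynomial associated to the odd-genus minimizer (times a cyclotomic factor such as $x^2-1$). The paper then remarks that Lefschetz number arguments in the style of Lanneau--Thiffeault should rule out this remaining polynomial in genus~2, and conjectures the same holds for all even genera. So the concrete obstruction is not the enumeration step---that has already been carried out for small $g$---but rather showing that the single surviving polynomial cannot be realized by an orientation-reversing pseudo-Anosov on $S_{2k}$. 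Your proposal gestures at this (``realizability step''), but the specific tool the paper points to is the Lefschetz fixed-point formula applied to the candidate singularity data, not Lehmer-type bounds or the absence of Galois conjugates on the unit circle; those latter tests are already baked into the elimination that produced \Cref{prop:elimination-reversing} and would not help with the residual polynomial.

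A couple of technical remarks. Your eigenvalue pairing $\{\mu,-1/\mu\}$ and the degree count are correct and match \Cref{prop:reversing-poly-properties}. The paper's terminology is \emph{skew-reciprocal} rather than antipalindromic. Your claim that a full proof is ``contingent on a prior resolution of \Cref{conj:rev-polynomials}'' is slightly too strong: for any fixed $k$ one only needs an upper bound on $\delta^+_{rev}(S_{2k-1})$, which is provided by the explicit constructions in \Cref{cor:construction-reversing}, not the exact value. The genuine difficulty, as you correctly identify, is making the non-realizability argument uniform in $k$.
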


\subsection*{Motivation}
\label{sec:motivation}

One motivation for studying $\delta(S)$, the smallest stretch factor of an
orientation-preserving pseudo-Anosov map on an orientable surface, is that the
shortest closed geodesic (in the Teichm\"uller metric) on the moduli space of
algebraic curves homeomorphic to $S$ has length $\log\delta(S)$.

Another motivation for studying small stretch factors comes from 3-manifold
theory. The mapping torus of a pseudo-Anosov map $f$ is a hyperbolic 3-manifold
$M_f$ and the stretch factor of $f$ is related to the hyperbolic volume of
$M_f$ (\cite{KinKojimaTakasawa09,KojimaMcShane18}). This relates small-volume
hyperbolic manifolds (\cite{Agol02, AgolStormThurston07,
  GabaiMeyerhoffMilley09, Milley09}) to small stretch factor pseudo-Anosov
maps.

One reason to study minimal stretch factors specifically in nonorientable
settings is the following connection to the conjecture of Schinzel and Zassenhaus
that asserts the existence of a universal constant~$c>0$
such that for any algebraic integer that is not a root of unity,
the largest modulus among its Galois conjugates is bounded from below by
$1+c/d,$
where~$d$ is the degree of the algebraic integer~\cite{SchinzelZassenhaus65}.
Using a result due to Breusch~\cite{Breusch51}, the first author has shown that this conjecture
has an equivalent reformulation that
compares, for each genus, the minimal spectral radius~$>1$ among homological
actions of orientation-preserving mapping classes with the minimal spectral radius~$>1$
among homological actions of orientation-reversing mapping classes~\cite{LiechtiSZ}.
For instance, if for all but finitely many genera, one can obtain smaller spectral radii by
orientation-reversing mapping classes, the conjecture of Schinzel and Zassenhaus
is true~\cite{LiechtiSZ}.

By studying the stretch factors of pseudo-Anosov mapping classes with an orientable
invariant foliation, we restrict to a certain class of homological actions; however, for
this class, our results seem to suggest that indeed smaller spectral radii can typically
be obtained by orientation-reversing mapping classes, compare \Cref{conj:rev-limit}
with \Cref{eq:or-limit} below.

\subsection*{Previous results}
\label{sec:history}

The value of $\delta(S_g)$ for hyperbolic $S_g$ is only known for
$g = 2$ \cite{ChoHam08}. This value is the largest root
of $x^4-x^3-x^2-x+1$, which is approximately 1.72208.
More is known about $\delta^+(S_g)$, the minimal stretch factor of
orientation-preserving pseudo-Anosov maps on $S_g$ with orientable invariant
foliations. The known values are summarized in \Cref{tab:classical-results}
below.
\renewcommand{\arraystretch}{1.2}
\begin{table}[ht]
  \centering
  \begin{tabular}{c|c|c}
    $g$ & $\delta^+(S_g)\approx$ & Minimal polynomial of
                                   $\delta^+(S_g)$ \\
    \hline
    1 & 2.61803 & $x^2-3x+1$ \\
    2 & 1.72208 & $x^4-x^3-x^2-x+1$ \\
    3 & 1.40127 & $x^6-x^4-x^3-x^2+1$ \\
    4 & 1.28064 & $x^8-x^5-x^4-x^3+1$ \\
    5 & 1.17628 & $x^{10}+x^9-x^7-x^6-x^5-x^4-x^3+x+1 = \frac{x^{12}-x^7-x^6-x^5+1}{x^2 - x + 1}$ \\
    7 & 1.11548 & $x^{14} +x^{13}-x^9-x^8-x^7-x^6-x^5+x+1$ \\
    8 & 1.12876 & $x^{16}-x^9-x^8-x^7+1$ \\
  \end{tabular}
  \caption{The known values of $\delta^+(S_g)$.}
  \label{tab:classical-results}
\end{table}

Initially, the pseudo-Anosov maps realizing the stretch factors in \Cref{tab:classical-results}
were constructed in different ways. The construction is due to Zhirov
\cite{Zhirov95} for $g=2$, Lanneau \& Thiffeault \cite{LanneauThiffeault11} for
$g=3,4$, Leininger \cite{Leininger04} for $g=5$, Kin \& Takasawa
\cite{KinTakasawa13} and Aaber \& Dunfield \cite{AaberDunfield10} for $g=7$ and
Hironaka \cite{Hironaka10} for $g=8$. Hironaka \cite{Hironaka10} then showed
that all of the examples above except the $g=7$ example arise from the
fibration of a single hyperbolic 3-manifold, the mapping torus of the
``simplest hyperbolic braid''. The fact that the values in \Cref{tab:classical-results} are indeed
the minimal stretch factors was shown by Lanneau and Thiffeault
\cite{LanneauThiffeault11} by a systematic way of narrowing down the set of
possible minimal polynomials of the minimal stretch factors. For the larger
values of $g$, their proof is computer-assisted.

\subsection*{Asymptotics}
\label{sec:asymptotics}

The rough asymptotic behavior of $\delta(S)$ is well-understood: Penner
\cite{Penner91} showed that $\log\delta(S_g) \sim \frac1g$. For other
constructions of small stretch factors and asymptotics for different sequences
of surfaces, see
\cite{Bauer92,McMullen00,Minakawa06,HironakaKin06,Tsai09,Valdivia12,Yazdi18}.

Since the larger root of the polynomial $x^2-2x-1$ is $1+\sqrt{2}$,
\Cref{conj:nonor-polynomials,conj:rev-polynomials} would imply the following
conjectures on the exact limits of the normalized minimal stretch factors.

\begin{conj}\label{conj:nonor_limit}
  \begin{displaymath}
    \lim_{\substack{g \to \infty \\ g\mbox{\scriptsize\ even}}}
    (\delta^+(N_{g}))^{g} = (1+\sqrt{2})^2 = (\mbox{silver ratio})^2.
  \end{displaymath}
\end{conj}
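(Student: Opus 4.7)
The plan is to reduce the claim to a short computation based on the polynomial relation supplied by \Cref{conj:nonor-polynomials}. Assuming that conjecture, $\delta^+(N_{2k})$ equals the largest real root $\lambda_k$ of $p_k(x) = x^{2k-1} - x^k - x^{k-1} - 1$, so the task is to show $\lambda_k^{2k} \to (1+\sqrt{2})^2$. The idea is to massage the relation $p_k(\lambda_k)=0$ into a \emph{single} equation satisfied by $\sqrt{\lambda_k^{2k}}$ whose coefficients depend continuously on $\lambda_k$, and then exploit $\lambda_k\to 1$.

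First I would multiply the defining identity $\lambda_k^{2k-1} = \lambda_k^k + \lambda_k^{k-1} + 1$ by $\lambda_k$ to obtain
\[
\lambda_k^{2k} = \lambda_k^{k+1} + \lambda_k^k + \lambda_k.
\]
Setting $\mu_k := \lambda_k^{2k}$ and using $\lambda_k^k = \sqrt{\mu_k}$, this rearranges into the quadratic identity
\[
(\sqrt{\mu_k})^2 - (\lambda_k + 1)\sqrt{\mu_k} - \lambda_k = 0
\]
in the positive unknown $\sqrt{\mu_k}$. Taking the positive branch gives the closed form
\[
\sqrt{\mu_k} = \frac{(\lambda_k + 1) + \sqrt{(\lambda_k + 1)^2 + 4\lambda_k}}{2}.
\]

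Next I would verify $\lambda_k \to 1$ directly from $p_k$: for any $\epsilon > 0$, the term $(1+\epsilon)^{2k-1}$ dominates the other terms of $p_k(1+\epsilon)$ once $k$ is large, so $p_k(1+\epsilon) > 0$ eventually and hence $\lambda_k < 1+\epsilon$; combined with $\lambda_k > 1$ (because a pseudo-Anosov stretch factor is greater than $1$, and also $p_k(1) = -2$), this yields $\lambda_k \to 1$. Substituting into the closed form above gives
\[
\sqrt{\mu_k} \longrightarrow \frac{2 + \sqrt{8}}{2} = 1 + \sqrt{2},
\]
so $\mu_k \to (1+\sqrt{2})^2$, as required.

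The only real obstacle here is \Cref{conj:nonor-polynomials} itself, which I am taking as a hypothesis; once the minimal polynomial is granted, the limit is essentially a one-line manipulation that bypasses any need to separately bound $\mu_k$ away from $0$ and $\infty$, because the quadratic formula already gives a continuous dependence of $\sqrt{\mu_k}$ on $\lambda_k$. If one wished to prove the limit unconditionally, one would need matching upper and lower bounds on $\delta^+(N_{2k})$: the upper bound $\lambda_k$ should follow from an explicit construction extending the even-genus entries of \Cref{theorem:stretch_factors_nonor}, while the asymptotic lower bound is the harder half and would presumably require a systematic polynomial-elimination argument in the spirit of the Lanneau--Thiffeault method for the orientable case.
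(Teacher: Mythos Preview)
The statement is a \emph{conjecture}, and the paper does not prove it. The paper's only justification is the single sentence preceding \Cref{conj:nonor_limit}: ``Since the larger root of the polynomial $x^2-2x-1$ is $1+\sqrt{2}$, \Cref{conj:nonor-polynomials,conj:rev-polynomials} would imply the following conjectures on the exact limits of the normalized minimal stretch factors.'' Your proposal is a correct and complete fleshing-out of exactly this implication: your quadratic $(\sqrt{\mu_k})^2 - (\lambda_k+1)\sqrt{\mu_k} - \lambda_k = 0$ degenerates to $x^2 - 2x - 1 = 0$ as $\lambda_k \to 1$, which is precisely the polynomial the paper invokes. You are also explicit that the argument is conditional on \Cref{conj:nonor-polynomials}, matching the paper's stance. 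So there is no discrepancy to report; your write-up simply supplies the details the paper omits.
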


\begin{conj}\label{conj:rev-limit}
  \begin{displaymath}
    \lim_{\substack{g \to \infty \\ g\mbox{\scriptsize\ odd}}}
    (\delta^+_{rev}(S_{g}))^{g} = 1+\sqrt{2} = \mbox{silver ratio}.
  \end{displaymath}
\end{conj}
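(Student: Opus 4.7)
The plan is to reduce the limit to an explicit one-variable algebraic calculation, taking \Cref{conj:rev-polynomials} for granted. Write $\lambda_k = \delta^+_{rev}(S_{2k-1})$, so that $\lambda_k$ is the largest real root of
\[
p_k(x) = x^{4k} - x^{2k+1} - x^{2k-1} - 1,
\]
and set $\mu_k = \lambda_k^{2k-1}$. Since $g = 2k-1$ on the odd subsequence, the target $(\delta^+_{rev}(S_g))^g \to 1+\sqrt{2}$ is exactly $\mu_k \to 1 + \sqrt{2}$.

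The key observation is that $4k = 2(2k-1)+2$ and $2k+1 = (2k-1)+2$, so $p_k(\lambda_k) = 0$ is really a \emph{quadratic} in $\mu_k$ whose coefficients depend only on $\lambda_k^2$:
\[
\lambda_k^{2}\,\mu_k^{2} - (\lambda_k^{2}+1)\,\mu_k - 1 = 0.
\]
Solving for the positive root yields the closed form
\[
\mu_k \;=\; \frac{(\lambda_k^{2}+1) + \sqrt{(\lambda_k^{2}+1)^{2} + 4\lambda_k^{2}}}{2\lambda_k^{2}},
\]
a continuous function of $\lambda_k$ which at $\lambda_k = 1$ evaluates to $(2+2\sqrt{2})/2 = 1+\sqrt{2}$, the larger root of $\mu^{2} - 2\mu - 1 = 0$. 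Thus it suffices to prove $\lambda_k \to 1$.

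For that, fix $\varepsilon > 0$ and set $M = (1+\varepsilon)^{2k-1}$; a direct rewrite gives
\[
p_k(1+\varepsilon) \;=\; (1+\varepsilon)^{2}\, M(M-1) - M - 1,
\]
which grows like $M^{2}$ and is therefore positive for all sufficiently large $k$. A routine derivative check shows $p_k$ is strictly increasing on $[1,\infty)$ (in fact $p_k'(x) = x^{2k-2}\bigl[4k x^{2k+1} - (2k+1)x^{2} - (2k-1)\bigr] > 0$ for $x > 1$), so $p_k(1+\varepsilon) > 0$ forces $\lambda_k < 1+\varepsilon$ eventually. Letting $\varepsilon \downarrow 0$ yields $\lambda_k \to 1$, and then $\mu_k \to 1+\sqrt{2}$ follows from the displayed formula by continuity.

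The steps above are essentially one line each once the polynomial family is in hand, so the genuine obstacle is not in the limit argument but in the input: without \Cref{conj:rev-polynomials} there is no explicit sequence of minimal polynomials to pass to the limit through, and it is the identification of the minimizer at every odd genus that carries the real content. An identical bookkeeping applied to $x^{2k-1}-x^{k}-x^{k-1}-1$ from \Cref{conj:nonor-polynomials}, setting $\nu_k = \lambda_k^{k-1}$ and using $\lambda_k^{2k} = \lambda_k^{2}\nu_k^{2}$, would prove \Cref{conj:nonor_limit} and produce the $(1+\sqrt{2})^2$ on the nonorientable side.
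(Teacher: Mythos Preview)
The paper does not prove this statement---it is listed as a conjecture, with the one-line remark that it would follow from \Cref{conj:rev-polynomials} because the larger root of $x^{2}-2x-1$ is $1+\sqrt{2}$. Your proposal correctly supplies the details of that implication (the substitution $\mu_k=\lambda_k^{2k-1}$ reducing $p_k(\lambda_k)=0$ to a quadratic in $\mu_k$, plus $\lambda_k\to 1$), and your explicit acknowledgment that the genuine content lies in \Cref{conj:rev-polynomials} matches the paper's framing exactly.
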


In order to compare \Cref{conj:nonor_limit,conj:rev-limit} to the
orientation-preserving case, we recall that Hironaka asked in \cite[Question
1.12]{Hironaka10} whether
\begin{equation}\label{eq:or-limit}
  \lim_{g\to \infty} (\delta^+(S_g))^g = \left(\frac{1+\sqrt{5}}{2}\right)^2 =
  (\mbox{\emph{golden ratio}})^2.
\end{equation}
Since any pseudo-Anosov map on $N_{g+1}$ can be lifted to a
pseudo-Anosov map on $S_g$ with the same stretch factor, it is natural that the
limit in \Cref{conj:nonor_limit} is larger than the limit in
(\ref{eq:or-limit}). The fact that the limit in \Cref{conj:rev-limit} is
smaller than the limit in (\ref{eq:or-limit}) is consistent with the fact that
nonorientable hyperbolic 3-manifolds can have smaller volume than orientable
ones. For example, the smallest volume non-compact hyperbolic 3-manifold is the
Gieseking manifold, a nonorientable manifold \cite{Adams87}. Since the stretch
factor is related to the volume of the mapping torus \cite{KojimaMcShane18}, on
a fixed surface one can expect to find orientation-reversing pseudo-Anosov maps
with smaller stretch factor than orientation-preserving ones.

\subsection*{Asymptotics along other genus sequences}
\label{sec:asymptotics-other-seqeuences}

We expect the limits in \Cref{conj:nonor_limit,conj:rev-limit} to be different
for other genus sequences. For example, we conjecture the following.

\begin{conj}\label{conj:second-subsequence}
  \begin{displaymath}
    \liminf_{\substack{g \to \infty \\ g\mbox{\scriptsize\ odd}}}
    (\delta^+(N_{g}))^{g} > (1+\sqrt{2})^2 = (\mbox{silver ratio})^2.
  \end{displaymath}
\end{conj}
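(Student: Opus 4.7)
The plan is to combine empirical data with a case-by-case lower bound on admissible stretch factors. First I would verify numerically that the conjecture holds throughout the range covered by \Cref{theorem:stretch_factors_nonor} and \Cref{conj:nine_eleven}: computing $(\delta^+(N_g))^g$ for $g=5,7,9,11$ gives approximately $7.9$, $11.8$, $15.6$, $9.1$, all comfortably above $3+2\sqrt{2}\approx 5.83$. This supports the conjecture and identifies the smallest of the computed values (here, $g=11$) as the candidate extremal along the currently accessible odd-$g$ subsequence.

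Next, to obtain a true liminf bound I would adapt the Lanneau--Thiffeault elimination scheme used elsewhere in this paper. For each odd $g$, enumerate the singularity profiles $(p_1,\ldots,p_k)$ with each $p_i$ even and $\sum(p_i-2) = 2g-4$. Each profile constrains the degree and Newton polygon of the minimal polynomial of the stretch factor $\lambda_g$, with extra constraints coming from the free $\mathbb{Z}/2$ deck action of the orientable double cover $S_{g-1}\to N_g$, under which each singularity of $N_g$ lifts to a pair of singularities on $S_{g-1}$. One would then verify, for every admissible profile, that the smallest Perron root of a polynomial of the permitted shape satisfies $\lambda_g^g > 3+2\sqrt{2}+\varepsilon$ for some absolute $\varepsilon>0$.

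The main obstacle is uniformity across the many possible profiles. The conjectured extremals $(4,4,4)$, $(4,4,4,4,4)$, $(16)$, $(8,8,8)$ for $g=5,7,9,11$ have very different shapes, so no single asymptotic analysis handles all cases. One would have to control a family of asymptotic estimates --- for example, showing that anti-reciprocal polynomials resembling $x^{g-1}-x^{(g+1)/2}-x^{(g-1)/2}-1$ and parity-shifted variants all produce roots $\lambda$ with $\lambda^g$ bounded below by a common constant --- and then prove that the profile minimizing $\lambda_g^g$ still lies above $3+2\sqrt{2}$. Ruling out that some exotic odd-$g$ profile sneaks below the even-$g$ benchmark is the heart of the difficulty: for even $g$ the extremal appears to concentrate all topology into a single $(2g-2)$-pronged singularity, and although Euler--Poincar\'e does not forbid this for odd $g$ either, the empirical evidence suggests a parity obstruction from the double cover that forces the odd-$g$ extremal into a less efficient configuration. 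Making this obstruction quantitative and uniform in $g$ is the step I would expect to be the hardest.
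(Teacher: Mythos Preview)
This statement is a \emph{conjecture} in the paper, not a theorem; the paper offers no proof and does not claim one. What the paper does provide is heuristic evidence from a companion work: for the restricted class of pseudo-Anosov maps arising from Penner's construction, the normalized minimal stretch factors $\delta_P(N_g)^g$ have two accumulation points, with the odd-$g$ limit strictly above $(1+\sqrt{2})^2$. The inference to $\delta^+(N_g)$ itself rests on the expectation (cf.~\Cref{question:penners-conj-nonor}) that Penner's construction captures the minimizers, which is itself open.

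Your proposal is honest about not being a proof either: you outline a Lanneau--Thiffeault-style elimination and then flag exactly the step that would need to be made rigorous---a uniform-in-$g$ lower bound across all singularity profiles---as ``the hardest''. That is a fair assessment, but it means your write-up is a research plan, not a proof, and it should not be presented as one. A couple of specific issues: your numerical check for $g=5,7,9,11$ uses the values from \Cref{conj:nine_eleven}, which is itself conjectural for $g=9,11$; and even with those values in hand, finitely many cases say nothing about a $\liminf$. More substantively, the paper's own polynomial-elimination machinery already fails to close out the odd-$g$ cases beyond $g=7$ (see \Cref{prop:elimination-nonor} and \Cref{tab:noLefschetz}), so the scheme you propose to adapt is precisely the one that runs into trouble here. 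There is no known way to make it uniform in $g$, and the paper does not attempt to.
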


This conjecture is supported by the following result in the paper
\cite{LiechtiStrennerPenner}. In that paper, we show that if $\delta_{P}(N_g)$
denotes the minimal stretch factor among pseudo-Anosov mapping classes on $N_g$
obtained from Penner's construction, then the sequence $\delta_P(N_g)$ has
exactly two accumulation points as $g \to \infty$. One accumulation
point,~$(1+\sqrt{2})^2$, is the limit for the sequence restricted to even $g$.
The other accumulation point, conjectured to be the largest root of
$x^4-8x^3+13x^2-8x+1$, which is strictly greater than~$(1+\sqrt{2})^2$, is the
limit for the sequence restricted to odd $g$. We expect this dichotomy to be
indicative how the sequence $(\delta^+(N_{g}))^{g}$ behaves for odd and even
genus sequences, respectively, since so far no pseudo-Anosov mapping class of a
nonorientable surface is known to not have a power arising from Penner's
construction (compare with \Cref{question:penners-conj-nonor} below).

\subsection*{Uniformity of minimizing examples}
In the orientation-preserving case, the concrete descriptions of the examples
are all very different. For $g=2$, Zhirov describes the example by the induced
homomorphism~$\pi_1(S) \to \pi_1(S)$. Lanneau and Thiffeault \cite[Appendix
C]{LanneauThiffeault11} describe the same example as a product of the Humphries
generators. For $g=3,4$, Lanneau and Thiffeault use Rauzy--Veech induction, and
for $g=5$, Leininger uses Thurston's construction. While Hironaka gives a
unified construction in \cite{Hironaka10} using fibered face theory, her work
does not give an explicit description of the maps.

In contrast, the descriptions of our examples are explicit and uniform: all of
our examples are constructed as a composition of a Dehn twist and a finite
order mapping class. As we will explain shortly, such constructions cannot
work in the orientable setting.

We remark that it is also possible to construct the examples in
\Cref{theorem:stretch_factors_nonor} and \Cref{theorem:stretch_factors_rev} by
studying fibrations of certain small volume nonorientable hyperbolic
3-manifolds, although we will not discuss this construction in this paper.

\subsection*{Galois conjugates and Penner's construction}

All of our examples have a power that arises from Penner's construction of
pseudo-Anosov mapping classes. In sharp contrast, none of the classical minimal
stretch factor examples have a power that arises from Penner's construction.
This is because these stretch factors have Galois conjugates on the unit
circle. However, Shin and the second author showed in \cite{ShinStrenner15}
that examples with this property do not have a power arising from Penner's
construction.

One may wonder what the reason of this discrepancy is. A heuristic reason for
why Galois conjugates of small stretch factors \emph{should} lie on the unit
circle is that every pseudo-Anosov stretch factor $\lambda$ is a bi-Perron
algebraic unit: a real number larger than~1 whose Galois conjugates 
lie in the annulus $\lambda^{-1} \le |z| \le \lambda$. If $\lambda$
is close to 1, this annulus is a thin neighborhood of the unit circle, so it
seems natural for the Galois conjugates to lie on the unit circle.

However, in \Cref{sec:poly-properties} we will prove the following theorem that
explains why the nonorientable cases are different.

\begin{thm}\label{thm:Galois-nonor}
  If $f$ is a pseudo-Anosov map on a nonorientable surface or an
  orientation-reversing pseudo-Anosov map on an orientable surface, then the
  stretch factor of $f$ does not have Galois conjugates on the unit circle.
\end{thm}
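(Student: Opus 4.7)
The plan is to reduce the nonorientable case to the orientation-reversing orientable case by passing to the orientation double cover and choosing the orientation-reversing lift; both lifted foliations are then orientable. Hence it suffices to consider an orientation-reversing pseudo-Anosov $f$ on a closed orientable surface $S$ with both invariant foliations $\calF^u,\calF^s$ orientable. The induced map $f_*$ on $H_1(S;\mathbf{R})$ preserves the intersection form up to sign $-1$, so it is anti-symplectic; equivalently, its characteristic polynomial $\chi$ satisfies $x^{2g}\chi(-1/x)=\pm\chi(x)$, and its eigenvalues come in pairs $(\mu,-1/\mu)$. Since $f$ reverses the orientation of $S$ and both foliations are orientable, $f$ must reverse exactly one of the two leaf-orientations, so the pair of eigenvalues coming from $[\calF^u]$ and $[\calF^s]$ is either $\{\lambda,-1/\lambda\}$ or $\{-\lambda,1/\lambda\}$. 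These two cases are interchanged by $\lambda\mapsto -\lambda$, which preserves the unit-circle condition on conjugates, so without loss I may assume that $\lambda$ itself is an eigenvalue of $f_*$.

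The first key step is to show that the minimal polynomial $p$ of $\lambda$ is anti-reciprocal, i.e., that $-1/\lambda$ is a Galois conjugate of $\lambda$. Since $f^2$ is an orientation-preserving pseudo-Anosov with the same orientable invariant foliations and stretch factor $\lambda^2$, the classical orientable theory gives that $\lambda^2$ is an algebraically simple eigenvalue of $(f^2)_*=(f_*)^2$. The multiplicity of $\lambda^2$ in the characteristic polynomial of $(f_*)^2$ equals the sum of the multiplicities of $\lambda$ and $-\lambda$ in $\chi$, so since $\lambda$ already contributes at least one, $-\lambda$ cannot be an eigenvalue of $f_*$. By the anti-symplectic pairing, $1/\lambda$ is then not an eigenvalue either, hence not a Galois conjugate of $\lambda$. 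But applying the classical orientable result to $f^2$ gives that the minimal polynomial of $\lambda^2$ is reciprocal, forcing some Galois automorphism $\sigma$ to send $\lambda$ to $\pm 1/\lambda$; having ruled out $+1/\lambda$, one has $\sigma(\lambda)=-1/\lambda$, so $p$ is anti-reciprocal.

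To conclude, suppose for contradiction that $\mu$ is a Galois conjugate of $\lambda$ on the unit circle. The special values $\mu\in\{\pm 1,\pm i\}$ are immediately incompatible with $p$ being irreducible and having $\lambda>1$ as a root, so $\mu$ is generic. Complex conjugation then gives $\bar\mu=1/\mu$ as another conjugate, anti-reciprocality of $p$ gives $-1/\mu=-\bar\mu$, and a further complex conjugation gives $-\mu$. In particular, both $\mu$ and $-\mu$ lie in the Galois orbit of $\lambda$, so squaring sends them to the common value $\mu^2$. Hence the induced map from conjugates of $\lambda$ to conjugates of $\lambda^2$ is not injective, forcing $[\mathbf{Q}(\lambda):\mathbf{Q}(\lambda^2)]=2$. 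The nontrivial element $\lambda\mapsto -\lambda$ of the relative Galois group extends to an element of $\mathrm{Gal}(\overline{\mathbf{Q}}/\mathbf{Q})$, so $-\lambda$ is a Galois conjugate of $\lambda$, hence a root of $p$ and in particular of $\chi$. This contradicts the fact, established above, that $-\lambda$ is not an eigenvalue of $f_*$.

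The step most requiring care is the algebraic simplicity of $\lambda^2$ as an eigenvalue of $(f^2)_*$: geometric simplicity of the unstable-foliation eigenvector is standard for pseudo-Anosovs with orientable invariant foliations, but ruling out Jordan blocks at $\lambda^2$ requires a small additional input from the orientable pseudo-Anosov theory.
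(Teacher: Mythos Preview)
There are two gaps. The minor one is in your reduction: passing to the orientation double cover of a nonorientable surface does \emph{not} in general make both lifted foliations orientable, since a foliation's orientation class in $H^1(N_g;\Z/2\Z)$ need not equal $w_1(N_g)$. This can be repaired with a further cover (on an orientable surface the two transverse foliations share an orientation class, and the deck transformation of that cover is orientation-preserving, so the lift remains orientation-reversing), but you should say so; you also omit the punctured case entirely.

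The substantive gap is your appeal to a ``classical orientable result'' asserting that the \emph{minimal} polynomial of $\lambda^2$ is reciprocal. The standard fact is only that the \emph{characteristic} polynomial of $(f^2)_*$ is reciprocal; an irreducible factor of a reciprocal polynomial need not itself be reciprocal (it may instead appear together with its reciprocal partner), so you cannot conclude that $1/\lambda^2$ is a Galois conjugate of $\lambda^2$. Your deduction that $p$ is anti-reciprocal is therefore unjustified.

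In fact the whole detour through $\lambda^2$ and anti-reciprocity is unnecessary, and bypassing it recovers exactly the paper's argument. You have already correctly shown that $1/\lambda$ is not an eigenvalue of $f_*$, hence not a Galois conjugate of $\lambda$. Now if $p$ has a root $\mu$ on the unit circle, then $\bar\mu=1/\mu$ is also a root, so the irreducible polynomial $p(x)$ and $x^{\deg p}p(1/x)$ share a root; they must then agree up to sign, so $p$ is reciprocal and $1/\lambda$ is a root of $p$ --- the desired contradiction. This is precisely how the paper proceeds, handling the nonorientable case directly (via the cited result that $\lambda$ and $\lambda^{-1}$ are never Galois conjugate for pseudo-Anosov maps on nonorientable surfaces) rather than through your cover reduction.
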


\subsection*{Penner's conjecture on nonorientable surfaces}

Penner asked in \cite{Penner88} whether every pseudo-Anosov map has a power
that arises from his construction.\footnote{The conjecture that this is true is
  known colloquially as Penner's conjecture. However, from the writing in
  \cite[p.~195]{Penner88}, it is unclear whether Penner intended to pose this
  as a question or a conjecture, or even whether he conjectured the opposite.}
This was answered in the negative by Shin an the second author in
\cite{ShinStrenner15} by providing the obstruction mentioned earlier: if the
stretch factor has a Galois conjugate on the unit circle, the pseudo-Anosov map
cannot have a power arising from Penner's construction.

However, \Cref{thm:Galois-nonor} demonstrates that this obstruction is vacuous
for nonorientable surfaces and for orientation-reversing maps. Since there are
no other known obstructions, it is possible that the answer to Penner's
question is in fact ``yes'' in these settings. Some evidence for this is
provided by the fact that all the minimal stretch factor examples we give in
\Cref{theorem:stretch_factors_nonor,theorem:stretch_factors_rev} have a power arising from Penner's
construction. Some evidence against is provided by the failure of the second author
in \cite[Section 7]{StrennerDegrees} to construct certain pseudo-Anosov maps on
nonorientable surfaces using Penner's construction. 

\begin{que}\label{question:penners-conj-nonor}
  Does every pseudo-Anosov map on a nonorientable surface have a power arising
  from Penner's construction?
\end{que}

\begin{que}\label{question:penners-conj-rev}
  Does every orientation-reversing pseudo-Anosov map on an orientable surface
  have a power arising from Penner's construction?
\end{que}

\subsection*{Outline of the paper}
\label{sec:lower-bound-intro}

In
\Cref{sec:construction-nonor,sec:construction-rev-odd},
we construct the examples for
\Cref{theorem:stretch_factors_nonor,theorem:stretch_factors_rev}. This is done by a
generalization of the construction we gave for the Arnoux--Yoccoz pseudo-Anosov
maps in \cite{LiechtiStrennerAY}.

In \Cref{sec:poly-properties}, we give various properties that the
characteristic polynomials of the action on homology have to satisfy for maps
on nonorientable surfaces and orientation-reversing
maps. We also give the proof of \Cref{thm:Galois-nonor} here.

To show that our examples have minimal stretch factor, we follow Lanneau and
Thiffeault's approach for orientable surfaces
\cite{LanneauThiffeault11,LanneauThiffeault11a}: we run a brute-force search
for integral polynomials whose largest root is smaller than our candidate for
the minimal stretch factor and hope that we do not find any. Aside from some
low genus cases, this search is computer-assisted. Our code can be found at
\href{https://github.com/b5strbal/polynomial-filtering}{\texttt{https://github.com/b5strbal/polynomial-filtering}}.

In \Cref{sec:poly-elimination}, we describe this polynomial elimination process
and prove \Cref{theorem:stretch_factors_nonor} without computer assistance in
the case $g=3$. This elimination process ends up being significantly cleaner
for us than it was for Lanneau and Thiffeault. In their case, the restrictions
on the polynomials alone are not sufficient to rule out all polynomials, so
they were left with a few polynomials that needed to be ruled out by studying
the possible singularity structures of the pseudo-Anosov maps and by using
Lefschetz number arguments. For us, no arguments like these are necessary.

\subsection*{Acknowledgements}

We are grateful to Jean-Luc Thiffeault for sharing the code that was used for
the papers \cite{LanneauThiffeault11,LanneauThiffeault11a}. We also thank Dan
Margalit, Mehdi Yazdi and an anonymous referee for helpful comments on an 
earlier version of this paper. The first author was supported by the Swiss National 
Science Foundation~(grant no.~175260)

\section{Construction of pseudo-Anosov maps on nonorientable surfaces}
\label{sec:construction-nonor}

In this and the next section, we use Penner's construction to construct 
pseudo-Anosov mapping classes. We briefly recall Penner's
construction below, stating it in a way that works both for orientable and for 
nonorientable surfaces. For more details, see
\cite[Section 4]{Penner88} or \cite[Section 2]{StrennerDegrees}.

In Penner's construction, we have a collection of two-sided simple closed
curves $C = \{c_1,\ldots,c_n\}$ that fill the surface (the complement of the
curves is a union of disks and once-punctured disks), that pairwise intersect minimally,
and that are \emph{marked inconsistently}. This means that there is a small regular 
neighborhood $N(c_i)$ for each curve $c_i$ and an orientation of each annulus $N(c_i)$ 
such that the orientation of $N(c_i)$ and~$N(c_j)$ are different at each
intersection whenever $i\ne j$. Penner showed that any product of the Dehn
twists $T_{c_i}$ is pseudo-Anosov assuming that
\begin{itemize}
\item each twist $T_{c_i}$ is right-handed according to the orientation of
  $N(c_i)$,
\item each twist $T_{c_i}$ is used in the product only with positive powers,
\item each twist $T_{c_i}$ is used in the product at least once.
\end{itemize}

Note that if the surface is oriented, then the above conditions in Penner's 
construction say that the collection of curves is a union of two multicurves~$\Gamma_1$ 
and~$\Gamma_2$, and the Dehn twists along the curves in~$\Gamma_1$ are 
all right-handed, whereas the Dehn twists along the curves in~$\Gamma_2$ are 
all left-handed with respect to the orientation of the surface.  

We will present the construction of our examples as follows. First we define the rotationally
symmetric graphs that will be the intersection graphs of the collections of
curves. Then we describe the rotationally symmetric surfaces and curves on
these surfaces whose intersection matrices realize the given graphs. Finally,
we define our mapping classes as a composition of a Dehn twist and a rotation.

\subsection{The graphs}
Let $k$ and $n$ be integers of different parity such that $n\ge 3$ and
$1 \le k \le n-1$. Let~$G_{n,k}$ be the graph whose vertices are the vertices
of a regular $n$-gon and every vertex $v$ is connected to the $k$ vertices that
are the farthest away from $v$ in the cyclic order of the vertices.

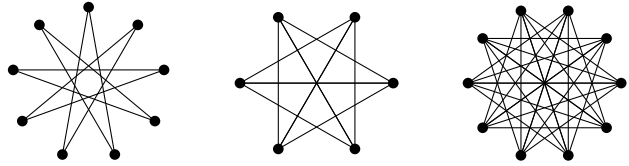
\begin{figure}[htb]
  \centering
  \begin{tikzpicture}
    \begin{scope}[xshift=-6cm]
    \node[draw=none,minimum size=2cm,regular polygon,regular polygon sides=9] (a) {};
    \foreach \x in {1,2,...,9}{
      \fill (a.corner \x) circle[radius=2pt];
      \foreach \y in {5}{
        \pgfmathtruncatemacro{\j}{mod(\x+\y-1,9)+1}
        \draw (a.corner \x) -- (a.corner \j);
      }
    }
    \end{scope}

    \begin{scope}[xshift=-3cm]
    \node[draw=none,minimum size=2cm,regular polygon,regular polygon sides=6] (a) {};
    \foreach \x in {1,2,...,6}{
      \fill (a.corner \x) circle[radius=2pt];
      \foreach \y in {2,3}{
        \pgfmathtruncatemacro{\j}{mod(\x+\y-1,6)+1}
        \draw (a.corner \x) -- (a.corner \j);
      }
    }
    \end{scope}

    \node[draw=none,minimum size=2cm,regular polygon,regular polygon sides=10] (a) {};
    \foreach \x in {1,2,...,10}{
      \fill (a.corner \x) circle[radius=2pt];
      \foreach \y in {3,4,5}{
        \pgfmathtruncatemacro{\j}{mod(\x+\y-1,10)+1}
        \draw (a.corner \x) -- (a.corner \j);
      }
    }

  \end{tikzpicture}
  \caption{The graphs $G_{9,2}$, $G_{6,3}$ and $G_{10,5}$.}
  \label{fig:surface-and-graphs}
\end{figure}

\subsection{The surfaces}
\label{sec:surfaces}

For each $G_{n,k}$, we will construct a nonorientable surface $\Sigma_{n,k}$ that contains 
a collection of curves with intersection graph $G_{n,k}$. To construct $\Sigma_{n,k}$, start with 
a disk with one crosscap. By this, we mean that we cut a smaller disk out of the disk and identify 
the antipodal points of the boundary of the small disk. We indicate this identification with a cross 
inside the small disk, see \Cref{fig:surface}. The resulting surface is homeomorphic to the 
M\"obius strip.

Next, we consider $2n$ disjoint intervals on the boundary of the disk and label the intervals with 
integers from 1 to $n$ so that each label is used exactly twice. In the cyclic order, the labels are 
$1, s, 2, s+1, \ldots, n, s+n$, where $s =\frac{n+k+3}2$ and all labels are understood modulo $n$.

For each label, the corresponding two intervals are connected by a twisted strip, as on \Cref{fig:surface}.

\begin{figure}[htb]
\labellist
\scriptsize\hair 2pt
 \pinlabel {$1$} at 198 163
 \pinlabel {$9$} at 247 152
 \pinlabel {$2$} at 273 152
 \pinlabel {$10$} at 313 166
 \pinlabel {$3$} at 334 177
 \pinlabel {$1$} at 352 216
 \pinlabel {$4$} at 359 239
 \pinlabel {$2$} at 361 285
 \pinlabel {$5$} at 352 305
 \pinlabel {$3$} at 328 340
 \pinlabel {$6$} at 308 351
 \pinlabel {$4$} at 265 361
 \pinlabel {$7$} at 241 361
 \pinlabel {$5$} at 198 348
 \pinlabel {$8$} at 179 337
 \pinlabel {$6$} at 154 307
 \pinlabel {$9$} at 144 281
 \pinlabel {$7$} at 146 238
 \pinlabel {$10$} at 163 214
 \pinlabel {$8$} at 182 178
 \pinlabel {$c_{10}$} at 215 270
\endlabellist
\centering
\includegraphics[width=0.80\textwidth]{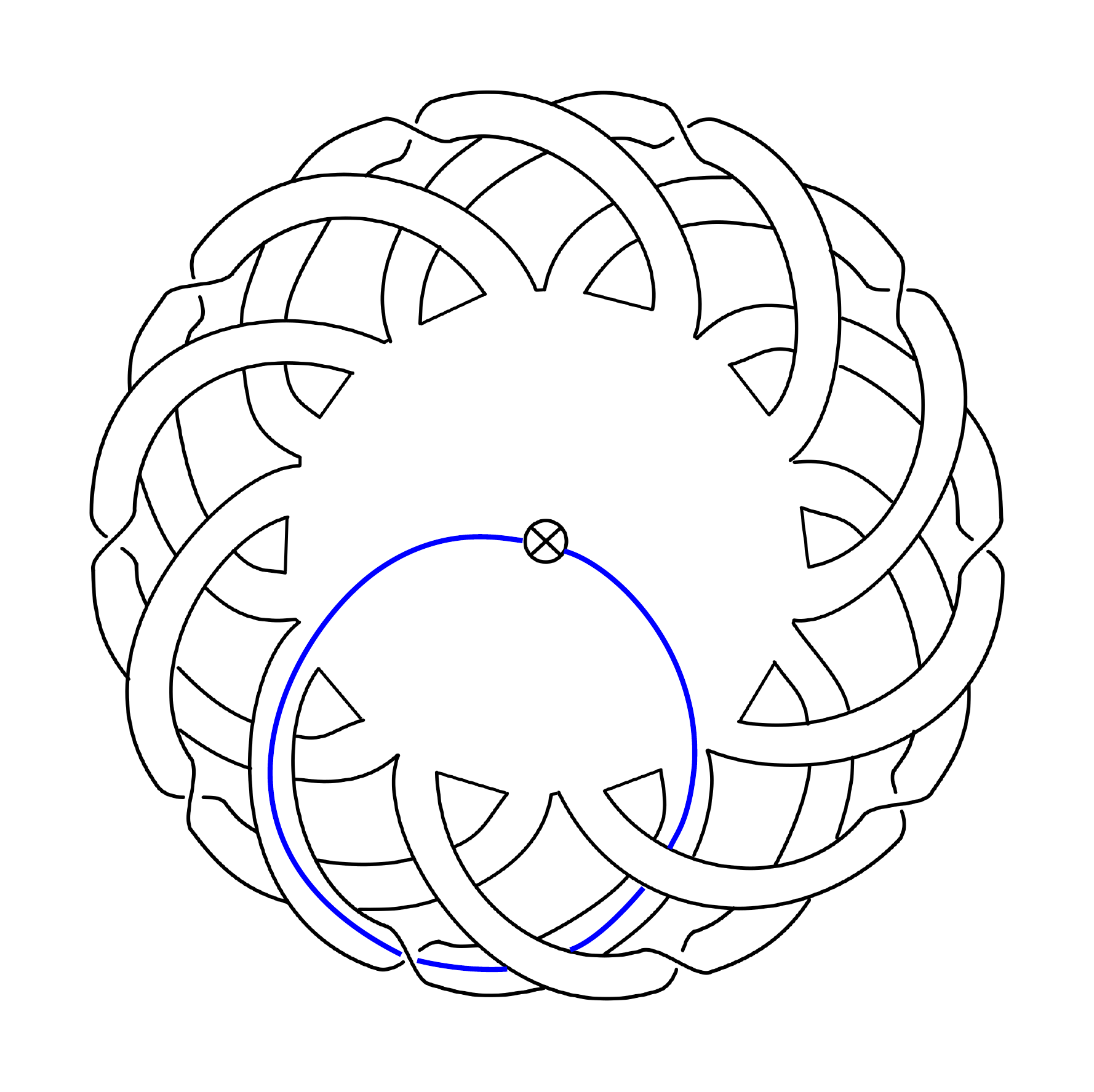}
  \caption{The surface $\Sigma_{10,5}$ and the curve $c_{10}$.}
\label{fig:surface}
\end{figure}

\begin{lemma}\label{lemma:euler-char}
  The Euler characteristic of $\Sigma_{n,k}$ is $-n$.
\end{lemma}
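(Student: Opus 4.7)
The plan is a direct additive computation of $\chi$ from the cell-like construction of $\Sigma_{n,k}$. First I would record the Euler characteristic of the starting piece: a disk with one crosscap is explicitly stated in the setup to be a M\"obius strip, which has $\chi = 0$. If desired, this can be justified independently by noting that removing an open disk from a disk drops $\chi$ from $1$ to $0$, and the subsequent antipodal identification on the new inner boundary circle is a quotient of a $1$-complex by a free involution, which preserves $\chi$.

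Next I would attach the $n$ twisted strips one at a time and track the effect on $\chi$. Each strip is topologically a rectangle (so $\chi = 1$) glued to the current surface along two disjoint intervals in its boundary (each with $\chi = 1$). By the inclusion-exclusion principle for Euler characteristic,
\begin{displaymath}
  \chi(S \cup \text{strip}) = \chi(S) + \chi(\text{strip}) - \chi(\text{glued intervals}) = \chi(S) + 1 - 2 = \chi(S) - 1.
\end{displaymath}
Applying this $n$ times starting from $\chi = 0$ gives $\chi(\Sigma_{n,k}) = 0 - n = -n$, which is the claim.

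I do not anticipate any serious obstacle. The computation is formal and does not depend on $k$ or on which pairs of boundary intervals get joined, only on the fact that a M\"obius strip is the base and exactly $n$ bands are attached; in particular the twisting of each strip, the cyclic labeling determined by $s = \frac{n+k+3}{2}$, and the combinatorics of $G_{n,k}$ are all irrelevant at this stage. The only point worth double-checking is that the described construction really produces the M\"obius strip as its base, which is already asserted in the sentence introducing the crosscap.
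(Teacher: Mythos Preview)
Your proposal is correct and follows exactly the same approach as the paper: the M\"obius strip base has $\chi=0$, and each of the $n$ attached strips contributes $-1$. Your inclusion-exclusion justification for the $-1$ contribution is a bit more explicit than the paper's one-line version, but the argument is the same.
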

\begin{proof}
  The disk with a crosscap has zero Euler characteristic (it is homeomorphic to a M\"obius strip), and each attached twisted strip has contribution $-1$.
\end{proof}

\begin{lemma}\label{lemma:bdy-components}
  The number of boundary components of $\Sigma_{n,k}$ is $\gcd(n,k)$.
\end{lemma}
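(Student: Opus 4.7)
My approach is a direct combinatorial count. The boundary of $\Sigma_{n,k}$ is naturally a union of $4n$ arcs: the $2n$ ``gap arcs'' on the M\"obius strip boundary lying between consecutive intervals, and the $2n$ ``strip arcs'' given by the long sides of the $n$ twisted strips. These arcs meet at the $4n$ endpoints (``corners'') of the intervals, with exactly one gap arc and one strip arc incident at each corner. Consequently the boundary decomposes as a disjoint union of cycles alternating between gap arcs and strip arcs, and it suffices to count these cycles.

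Let $\pi$ be the permutation of the $4n$ corners that advances the boundary by one gap arc followed by one strip arc, in a fixed direction along the M\"obius boundary. Since each cycle alternates arc types, it has even length and contains exactly two $\pi$-orbits, so the number of boundary components equals half the number of $\pi$-orbits. From the cyclic label sequence $1, s, 2, s+1, \ldots, n, s+n-1$ with $s=(n+k+3)/2$, a short calculation shows that the pairing of intervals by strips is $\sigma(p) \equiv p + n - k \pmod{2n}$ for even $p$ and $\sigma(p) \equiv p + n + k \pmod{2n}$ for odd $p$, from which one obtains an explicit formula for $\pi$ in terms of $\sigma$.

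The key observation is that $\pi^{2}$ preserves the four residue classes modulo $4$ among the corners, and on each such class (which has $n$ elements in natural bijection with $\Z/n\Z$) acts by translation by $\pm k$; thus $\pi^{2}$ has exactly $4\gcd(n,k)$ orbits. Moreover $\pi$ interchanges the classes $0\leftrightarrow 1$ and $2\leftrightarrow 3$ modulo $4$, so every $\pi$-orbit has even length and splits into two $\pi^{2}$-orbits of equal size. Hence $\pi$ has $2\gcd(n,k)$ orbits and $\Sigma_{n,k}$ has $\gcd(n,k)$ boundary components. The main care needed is in fixing the gluing convention for the twisted strips---i.e.\ whether the two long sides of each strip connect ``left-to-left'' or ``left-to-right'' endpoints of the joined intervals---but once this is pinned down in accordance with Penner's construction, the remainder of the argument is routine orbit arithmetic for a cyclic shift on $\Z/n\Z$.
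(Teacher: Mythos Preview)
Your argument is correct and the orbit arithmetic checks out (with the ``left-to-left'' convention one finds $\pi^2$ acts on each mod-$4$ class of corners as translation by $\pm k$ on $\Z/n\Z$, and $\pi$ swaps the classes in pairs, exactly as you assert). One small point of exposition: the claim that each boundary cycle contributes \emph{exactly two} $\pi$-orbits deserves a sentence of justification, since $\pi$ is not a uniform rotation of the cycle---it advances by two on corners where the gap arc points ``forward'' and retreats by two on the others---but the conclusion is the same.

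The paper's proof is considerably shorter because it skips the $4n$-corner bookkeeping entirely: it tracks only the $2n$ right endpoints of the intervals and observes (by a brief induction on $k$, decreasing by $2$) that the next right endpoint along the boundary from position $i$ is at position $i+(n-k)$. Thus boundary components correspond directly to orbits of the single shift $x\mapsto x+(n-k)$ on $\Z/2n\Z$, and since $n-k$ is odd this immediately gives $\gcd(n-k,2n)=\gcd(n,k)$. Your approach is more systematic and makes the dependence on the strip-gluing convention explicit, which is a virtue; the paper's shortcut trades that transparency for brevity, and its inductive step (``and so on'') is admittedly informal. Both routes ultimately reduce to counting orbits of a cyclic shift, just at different levels of resolution.
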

\begin{proof}
  We will show that the number of boundary components of $\Sigma_{n,k}$ is the
  same as the number of orbits of the dynamical system $x \mapsto x+n-k$ in the
  group $\Z/2n\Z$. The number of such orbits is $\gcd(n-k, 2n) = \gcd(k,n)$,
  since $n-k$ is odd.

  To prove our claim, we identify $\Z/2n\Z$ with the $2n$ intervals in the
  cyclic order. We claim that the right endpoint of the interval at position
  $i$ lies on the same boundary component as the right endpoint of the interval
  at position $i+n-k$. One can see this by induction. In the case $k = n-1$,
  the cyclic order of labels is $1, 1, \ldots, n, n$, so the twisted strips
  identify the right endpoint of every interval with the right endpoint of the
  next interval. When $k=n-3$, the cyclic order is
  $1, n, 2, 1, \ldots, n, n-1$, in which case every third right endpoint is on
  the same boundary component, and so on.
\end{proof}

\begin{proposition}\label{prop:nonor-surface-type}
  The surface $\Sigma_{n,k}$ is homeomorphic to the nonorientable surface of genus $n-\gcd(k,n)+2$ with $\gcd(k,n)$ boundary components.
\end{proposition}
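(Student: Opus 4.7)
The plan is to reduce the statement to the classification theorem for compact surfaces. Combined with \Cref{lemma:euler-char} and \Cref{lemma:bdy-components}, the only thing left to verify is that $\Sigma_{n,k}$ is nonorientable; then its homeomorphism type is determined by the two numerical invariants we already computed.

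First I would observe that $\Sigma_{n,k}$ is nonorientable. This is immediate from the construction: we start with a disk with one crosscap, which is (up to homeomorphism) a M\"obius band, and we obtain $\Sigma_{n,k}$ by attaching bands along disjoint intervals of the boundary. Attaching bands to a surface that already contains an embedded M\"obius band yields a surface that still contains that same M\"obius band, hence is still nonorientable.

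Next I would invoke the classification of compact surfaces. A compact nonorientable surface with $b$ boundary components and nonorientable genus $g$ (that is, the connected sum of $g$ projective planes with $b$ open disks removed) has Euler characteristic $2 - g - b$. Setting this equal to the value from \Cref{lemma:euler-char} and plugging in the number of boundary components from \Cref{lemma:bdy-components} gives
\begin{equation*}
  -n = 2 - g - \gcd(k,n),
\end{equation*}
which rearranges to $g = n - \gcd(k,n) + 2$, as desired.

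There is no real obstacle here; the only subtle point is making sure that the attached ``twisted strips'' do not somehow cancel the initial crosscap, but this cannot happen since a compact surface containing an embedded M\"obius band is nonorientable regardless of what else is glued to it. All the hard combinatorial work has already been packaged into the two preceding lemmas.
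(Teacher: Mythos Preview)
Your proof is correct and follows essentially the same approach as the paper: use \Cref{lemma:euler-char} and \Cref{lemma:bdy-components} together with the Euler characteristic formula $\chi = 2 - g - b$ for a nonorientable surface, then solve for $g$. The only difference is that you make the nonorientability of $\Sigma_{n,k}$ explicit (via the embedded M\"obius band coming from the crosscap), whereas the paper leaves this implicit from the construction.
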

\begin{proof}
  The Euler characteristic of the nonorientable surface of genus $g$ with $b$
  boundary components is $2-g-b$. By
  \Cref{lemma:euler-char,lemma:bdy-components}, we obtain the equation
  $2-g-\gcd(k,n) = -n$. Rearranging, we obtain $g = n-\gcd(k,n)+2$.
\end{proof}

\subsection{The curves}
\label{sec:curves}

We construct a two-sided curve $c_i$ for each label $i = 1, \ldots, n$ as
follows. Each curve consists of two parts. One part of each curve is the core
of the strip corresponding to the label. The other part is an arc inside the
disk that passes through the crosscap and connects the corresponding two
intervals. The curve $c_{10}$ is shown on \Cref{fig:surface}.

Note that every pair of curves intersects either once or not at all. The curves
$c_i$ and $c_j$ are disjoint if and only if the two $i$ labels and the two $j$
labels \emph{link} in the cyclic order. In other words, if the two $i$ labels
separate the two $j$ labels.

\begin{lemma}
  The intersection graph of the curves $c_i$ on $\Sigma_{n,k}$ is $G_{n,k}$.
\end{lemma}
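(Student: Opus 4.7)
The plan is to use rotational symmetry to reduce to a single curve, and then carry out a direct combinatorial computation of linkings on the boundary circle. By the observation recalled just before the lemma, $c_i$ and $c_j$ are disjoint if and only if the two $i$ labels and the two $j$ labels link in the cyclic order on the boundary of the disk, so I must identify the non-linking label pairs and check that they are precisely the edges of $G_{n,k}$.

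First I would index the $2n$ boundary intervals by positions $0,1,\ldots,2n-1$ in cyclic order; with $s=(n+k+3)/2$, the two positions carrying label $t$ are then $2t-2$ and $2t-2s+1 \equiv 2t+n-k-2 \pmod{2n}$. The cyclic shift $p\mapsto p+2$ on boundary positions preserves the labeling pattern and sends label $t$ to label $t+1 \pmod n$. Consequently the intersection graph is invariant under the cyclic symmetry $i\mapsto i+1$ of the vertex set, and it suffices to identify the labels $j$ whose positions fail to link with those of label $1$, which sit at positions $0$ and $n-k$.

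These two positions partition the remaining positions into a short arc $\{1,\ldots,n-k-1\}$ and a long arc $\{n-k+1,\ldots,2n-1\}$. A direct case analysis on whether the even position $2j-2$ and the odd position $2j+n-k-2 \pmod{2n}$ of label $j$ lie in the same arc or in different arcs shows that non-linking (i.e.\ intersection of $c_1$ with $c_j$) occurs precisely for
\[
  j \in \left\{\tfrac{n-k+3}{2},\ \tfrac{n-k+3}{2}+1,\ \ldots,\ \tfrac{n+k+1}{2}\right\},
\]
a set of $k$ consecutive labels centered at $(n+2)/2$. These are exactly the $k$ vertices farthest from vertex $1$ in the cyclic order on $\{1,\ldots,n\}$, hence precisely the neighbors of $1$ in $G_{n,k}$. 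Combined with the cyclic symmetry, this identifies the intersection graph with $G_{n,k}$. The only real technicality is parity bookkeeping—since $n$ and $k$ have opposite parity, $s$ and $(n\pm k+1)/2$ are integers—and careful tracking of reductions modulo $2n$ in the case split; no deeper obstacle is involved.
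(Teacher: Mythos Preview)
Your proof is correct, but it proceeds differently from the paper's. The paper argues by induction on $k$, starting from the base case $k=n-1$ (where the cyclic order of labels is $1,1,2,2,\ldots,n,n$, no pair of labels links, and the intersection graph is the complete graph $G_{n,n-1}$) and then observing that each time $k$ decreases by $2$, the value of $s$ drops by $1$ and every label becomes linked with exactly two additional labels, so the intersection graph passes from $G_{n,k}$ to $G_{n,k-2}$.

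Your argument instead fixes $k$, uses the rotational symmetry to reduce to the single vertex $1$, and computes directly which labels fail to link with label $1$. This yields the explicit description of the neighbor set $\{(n-k+3)/2,\ldots,(n+k+1)/2\}$ in one shot, at the price of the modular case analysis you allude to. The inductive proof in the paper is shorter and avoids that bookkeeping, but it is less explicit about which labels are adjacent. Either route is perfectly adequate here; if you keep yours, it would strengthen the write-up to actually display the short case split (even position in short or long arc, odd position before or after reduction modulo $2n$) rather than just asserting its outcome.
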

\begin{proof}
  We proof the lemma by induction. If $k=n-1$, then $s=1$, so the cyclic order is $1, 1, 2, 2, \ldots, n, n$. Since the no two labels link, all pairs of curves intersect and the intersection graph is the complete graph $G_{n,n-1}$.

  Now suppose $k$ is decreased by 2. Then $s$ is decreased by 1, and we obtain the cyclic order $1, n, 2, 1, 3, 2, \ldots, n, n-1$. As a consequence, 1 becomes linked with 2 and $n$. Hence the intersection graph is indeed $G_{n,k}$.

  It is easy to see that every time $k$ is decreased by two each label is linked with two more labels, hence the intersection graph is always $G_{n,k}$.
\end{proof}

\begin{lemma}
  The curves $c_i$ can be marked so that all intersections are inconsistent.
\end{lemma}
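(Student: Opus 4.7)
My plan is to construct an explicit inconsistent marking using the strip structure and the rotational symmetry of $\Sigma_{n,k}$. First I would orient each annulus $N(c_i)$ by the following canonical rule: the twisted strip labeled $i$ has an obvious transverse direction, and this orients $N(c_i)$ along the strip portion of $c_i$. Since $c_i$ is two-sided by construction, this orientation extends uniquely to an orientation of all of $N(c_i)$.

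The rotational symmetry of $\Sigma_{n,k}$ by angle $2\pi/n$ cyclically permutes the labeled strips, taking $c_i$ to $c_{i+1}$, and it carries the above marking rule to itself. Hence the intersection points of the curves fall into a small number of orbits, indexed by the label differences for which $c_i \cap c_{i+j} \neq \emptyset$, and it suffices to verify inconsistency on one representative from each orbit. For each such representative I would trace, in the picture of $\Sigma_{n,k}$ (see \Cref{fig:surface}), the two intersecting arcs from their respective strips to the intersection point, and compare the induced orientations of $N(c_i)$ and $N(c_j)$ at that point.

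The main obstacle I anticipate is tracking orientations through the crosscap. Passing through the crosscap reverses local orientation, so the strip-induced orientation on $N(c_i)$ can flip between the two endpoints of the arc in the disk, and whether the intersection lies before or after the crosscap along each of $c_i$ and $c_j$ must be handled carefully. This sign flip at the crosscap is precisely the topological feature that allows an inconsistent marking to exist at all, since the intersection graph $G_{n,k}$ is generally not bipartite and could therefore not be inconsistently marked on an orientable surface; the argument must use the nonorientability of $\Sigma_{n,k}$ in an essential way. I expect the cyclic order $1, s, 2, s+1, \ldots$ with $s = (n+k+3)/2$ to be engineered exactly so that the strip-induced orientations, after the appropriate crosscap flips, disagree at every intersection. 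The cleanest way to verify this is likely an induction on $k$ starting from the base case $k=n-1$, where the cyclic order is $1,1,2,2,\ldots,n,n$ and the orientation bookkeeping is immediate, paralleling the inductive argument used in the proof of the previous lemma.
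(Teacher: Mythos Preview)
Your approach is correct in outline and would work, but the paper sidesteps the orbit-by-orbit bookkeeping and the induction on $k$ with a single global device. Rather than tracking how the strip-induced orientation of $N(c_i)$ flips as you pass through the crosscap, the paper first cuts $\Sigma_{n,k}$ into an orientable surface by removing the crosscap and slicing each twisted strip at its midpoint. On this orientable piece one fixes a global orientation and then colors each arc of each $c_i$ red or blue according to whether the (rotationally symmetric) marking of $N(c_i)$ agrees or disagrees with that ambient orientation. The color of a curve flips exactly when it crosses the crosscap or a strip midpoint. With this coloring in hand, inconsistency is immediate: one simply observes from the picture that at every intersection a red arc meets a blue arc.

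What each approach buys: your plan makes the role of the crosscap explicit and connects naturally to the inductive structure used for the intersection-graph lemma, but it requires you to actually carry out the sign computation for each orbit of intersections and to set up an induction. The paper's coloring argument absorbs all of that into a single reference orientation on an auxiliary orientable surface, so no induction and no case analysis on the label difference $j$ is needed. Your remark that $G_{n,k}$ is generally not bipartite, and hence that nonorientability must be used essentially, is exactly right; in the paper's language this is reflected in the fact that the color changes at the crosscap (not only at the strip midpoints), which is what makes a consistent two-coloring of the arcs possible despite the odd cycles in $G_{n,k}$.
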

\begin{proof}
  Choose markings for the $c_i$ which are invariant under the rotational
  symmetry, see \Cref{fig:curves}. The marking of the curves is indicated by
  the coloring as follows. Consider the orientable surface obtained by removing
  the crosscap and cutting the strips attached to the disk in the middle.
  Choose an orientation of this surface. Then color the arcs composing the
  curves using red and blue depending on whether the orientation of the
  neighborhood of the curve matches the orientation of the surface or not. Note
  that the color of a curve changes when it goes through the crosscap or the
  middle of a strip.

  Since blue and red meets at every intersection, the marking is inconsistent.
\end{proof}

\begin{figure}[ht]
  \centering
  \includegraphics[width=0.72\textwidth]{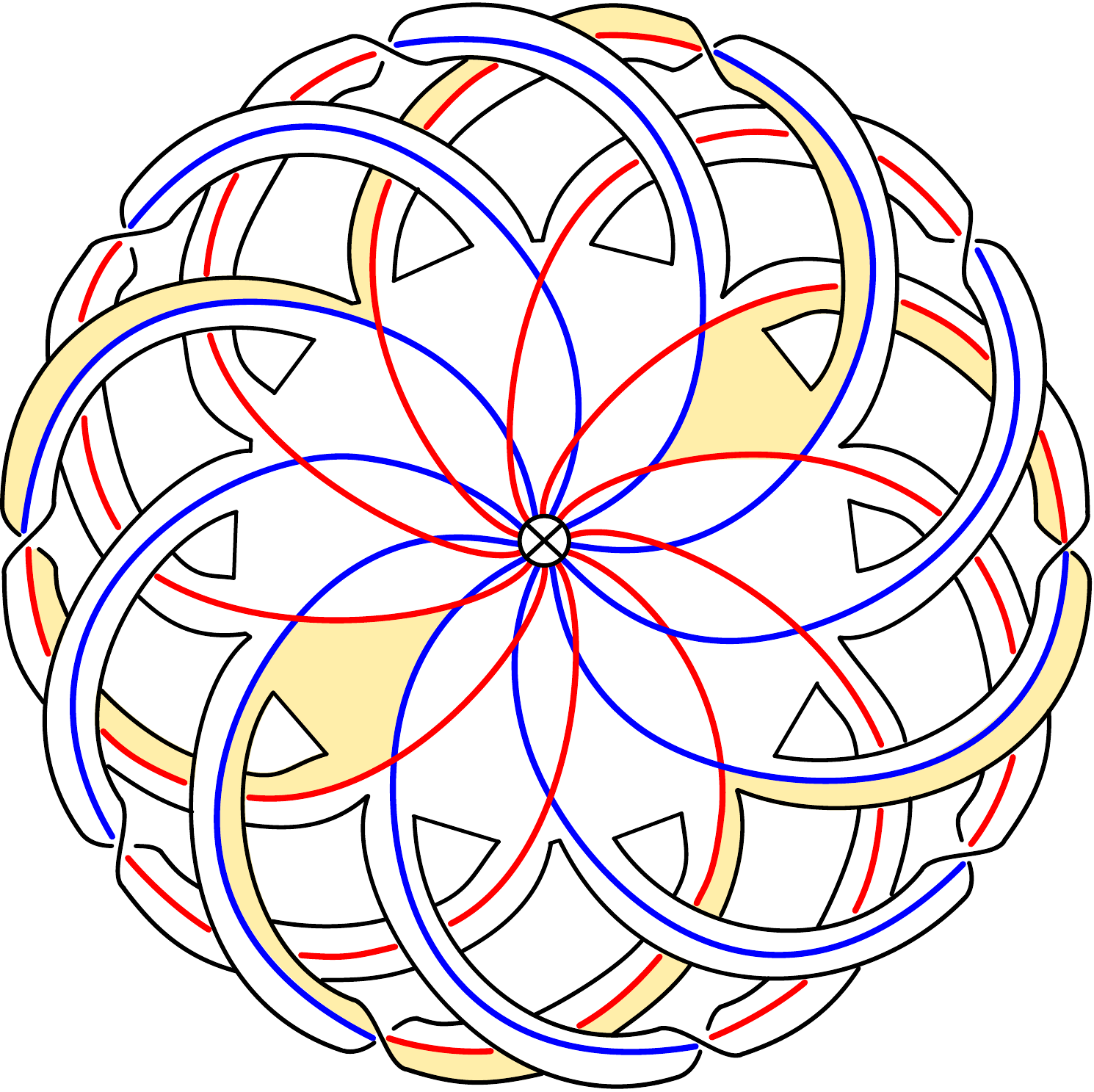}
  \caption{A collection of filling inconsistently marked curves.}
  \label{fig:curves}
\end{figure}

\subsection{The mapping classes}
\label{sec:mapping-classes}

Denote by $r$ the rotation of $\Sigma_{n,k}$ by one click in the clockwise direction. Define the mapping class
\begin{displaymath}
  f_{n,k} = r\circ T_{c_1}
\end{displaymath}
where $T_{c_1}$ is a Dehn twist about the curve $c_1$. (There are two possible
directions for the Dehn twist, but either choice works for our purposes.) Note that
\begin{displaymath}
  f_{n,k}^n = T_{c_n} \circ \cdots \circ T_{c_1},
\end{displaymath}
so $f_{n,k}^n$ arises from Penner's construction. In particular, $f_{n,k}^n$ is pseudo-Anosov and so is~$f_{n,k}$.

We remark that for~$k=n-1$, the mapping class~$f_{n,k}$ coincides with the
nonorientable Arnoux-Yoccoz mapping class~$h_{n-1}$, described as a product of
a Dehn twist and a finite order mapping class by the authors
in~\cite{LiechtiStrennerAY}.

\begin{proposition}\label{prop:nonor-defining-poly}
  The stretch factor of $f_{n,k}$ is the largest root of $x^n - x^{n-r} - x^{n-r-1} -\cdots - x^{r+1} - x^{r} - 1$, where $r = \frac{n-k+1}2$.
\end{proposition}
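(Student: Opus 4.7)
The plan is to identify the stretch factor of $f_{n,k}$ with the Perron--Frobenius eigenvalue of an explicit $n \times n$ non-negative matrix $M$, and then to extract the characteristic polynomial of $M$ via a rank-one perturbation argument.

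First I would set up $M$. Let $R$ be the permutation matrix of $r$ acting on the $n$-dimensional weight space spanned by $c_1, \ldots, c_n$; let $N \in \{0, 1\}^{n \times n}$ be the symmetric intersection matrix of the $c_i$, whose support is the edge set of $G_{n,k}$; and let $E_1 = e_1 e_1^T$. In Penner's construction, the Dehn twist $T_{c_1}$ acts on this weight space by $I + E_1 N$, so $f_{n,k}$ acts by
\[
M \;=\; R(I + E_1 N) \;=\; R + e^{\ast} \nu^T,
\]
where $e^{\ast} = R e_1$ and $\nu^T = (N_{11}, \ldots, N_{1n})$ is the first row of $N$, supported on the $k$ positions $\{r+1, r+2, \ldots, r+k\}$ corresponding to the labels adjacent to $1$ in $G_{n,k}$. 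The $n$-th power $f_{n,k}^n$ is a product in which each Dehn twist $T_{c_i}$ appears exactly once, so by Penner's theorem it is pseudo-Anosov with stretch factor equal to the Perron--Frobenius eigenvalue of the corresponding matrix product; this product is precisely $M^n$. Taking $n$-th roots, the stretch factor of $f_{n,k}$ equals the Perron--Frobenius eigenvalue of $M$ itself.

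Next I would compute $\det(xI - M)$. Since $M - R = e^{\ast} \nu^T$ has rank one, the matrix determinant lemma gives
\[
\det(xI - M) \;=\; \det(xI - R)\bigl(1 - \nu^T (xI - R)^{-1} e^{\ast}\bigr) \;=\; (x^n - 1)\bigl(1 - \nu^T(xI - R)^{-1}e^{\ast}\bigr).
\]
From the explicit formula $(xI - R)^{-1} = (x^n - 1)^{-1} \sum_{j=0}^{n-1} x^{n-1-j} R^j$ one reads off the entries of $(xI - R)^{-1} e^{\ast}$; pairing with $\nu$ (which is supported on the window $\{r+1, \ldots, r+k\}$) and making the change of variables $\ell = n + 1 - m$, together with the identity $r + k = n - r + 1$, produces
\[
\nu^T (xI - R)^{-1} e^{\ast} \;=\; \frac{1}{x^n - 1} \sum_{\ell = r}^{n - r} x^\ell.
\]
Substituting back gives $\det(xI - M) = x^n - \sum_{\ell = r}^{n - r} x^\ell - 1$, exactly the polynomial in the statement; its largest real root is the Perron--Frobenius eigenvalue of the non-negative matrix $M$, hence the stretch factor of $f_{n,k}$.

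The main subtlety, in my view, is confirming that the Perron--Frobenius eigenvalue of the $n \times n$ matrix $M$ is genuinely the stretch factor of $f_{n,k}$ rather than a sub-eigenvalue dominated by something from a larger invariant subspace of the full Penner train track. I would resolve this by passing through $f_{n,k}^n$, where Penner's theorem yields the identification directly; beyond this reduction, the rest of the argument is pure linear algebra.
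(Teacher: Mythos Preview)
Your proof is correct and follows the same setup as the paper: both identify the stretch factor with the Perron eigenvalue of the action $M = R(I + E_1 N)$ on Penner's cone of measures. Two differences in execution are worth noting. First, the paper simply observes that $M$ is the companion matrix of the stated polynomial (look at your formula $M = R + e^{\ast}\nu^T$ with $e^{\ast} = e_n$: the superdiagonal comes from $R$ and the bottom row is $(1,\nu_2,\ldots,\nu_n)$), so the characteristic polynomial is read off without the matrix determinant lemma. Second, for the ``subtlety'' you flag, the paper argues directly that since the cone spanned by the characteristic measures $\mu_i$ is invariant under both $r$ and $T_{c_1}$ separately, it contains the unstable foliation, and hence the stretch factor is the top eigenvalue of $M$; no detour through $f_{n,k}^n$ and Penner's theorem is needed.
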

\begin{proof}
  To compute the stretch factor, we use Penner's approach in the section titled
  ``An upper bound by example'' in \cite{Penner91}. Penner constructed an
  invariant bigon track by smoothing out the intersections of the curves $c_i$.
  Each $c_i$ defines a characteristic measure $\mu_i$ on this bigon track,
  defined by assigning 1 to the branches traversed by $c_i$ and zero to the
  rest. The cone generated by the $\mu_i$ is invariant under both $T_{c_1}$ and
  $r$, hence it contains the unstable foliation, and the stretch factor is
  given by the largest eigenvalue of the action of $r\circ T_{c_1}$ on this
  cone. The rotation $r$ acts by a permutation matrix and the matrix
  corresponding to $T_{c_1}$ is the sum of the identity matrix and matrix
  obtained by the intersection matrix $i(C,C)$ by zeroing out all rows except
  the first row. The product of these two matrices takes the following form:
\begin{displaymath}
  \begin{pmatrix}
    0 & 1 & 0 & 0 & 0 & 0 & 0 & 0 & 0 & 0 \\
    0 & 0 & 1 & 0 & 0 & 0 & 0 & 0 & 0 & 0 \\
    0 & 0 & 0 & 1 & 0 & 0 & 0 & 0 & 0 & 0 \\
    0 & 0 & 0 & 0 & 1 & 0 & 0 & 0 & 0 & 0 \\
    0 & 0 & 0 & 0 & 0 & 1 & 0 & 0 & 0 & 0 \\
    0 & 0 & 0 & 0 & 0 & 0 & 1 & 0 & 0 & 0 \\
    0 & 0 & 0 & 0 & 0 & 0 & 0 & 1 & 0 & 0 \\
    0 & 0 & 0 & 0 & 0 & 0 & 0 & 0 & 1 & 0 \\
    0 & 0 & 0 & 0 & 0 & 0 & 0 & 0 & 0 & 1 \\
    1 & 0 & 0 & 1 & 1 & 1 & 1 & 1 & 0 & 0 \\
  \end{pmatrix}
\end{displaymath}
This particular matrix belongs to $f_{10,5}$.

This matrix is the companion
matrix of the polynomial in the statement of the proposition. Hence the
characteristic polynomial of this matrix is indeed that polynomial.
\end{proof}

Our next goal is to determine the singularity structure of the mapping classes
$f_{n,k}$. For this, first we need a lemma.

Consider the complementary regions of the curves $\{c_1,\ldots,c_n\}$. There
are two types of regions depending on whether a region contains a boundary
component of $\Sigma_{n,k}$ (type~1) or not (type~2). A region of type~1 is an
annulus that is bounded by a boundary component $\beta$ of $\Sigma_{n,k}$ on one
side and by a polygonal path consisting of arcs of the curves $c_i$ on the
other side. The shaded region on \Cref{fig:curves} illustrates a region of type~1.

\begin{lemma}\label{lem:polygon-length}
  The length of these polygonal paths is $\frac{4n}{\gcd(k,n)}$.
\end{lemma}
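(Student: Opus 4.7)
My strategy is to reduce the lemma to showing that every type-2 complementary region is bounded by exactly four (coarse) sub-arcs of the curves $c_i$, and then to prove this quadrilateral claim.

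First, I would extend the orbit-counting argument from the proof of \Cref{lemma:bdy-components}. A disk-arc goes from the right endpoint of one interval to the left endpoint of the next (in the cyclic order on the disk boundary), while a strip-side connects a right endpoint to a left one because of the twist of the attached strip; hence transition points on any boundary component $\beta$ alternate between right and left interval endpoints as one traverses it. Combined with the orbit size $2n/\gcd(k,n)$ from that proof, every $\beta$ contains exactly $4n/\gcd(k,n)$ transition points and alternates $2n/\gcd(k,n)$ disk-arcs with $2n/\gcd(k,n)$ strip-sides. Next, I would perform a double count on the CW-decomposition of $\Sigma_{n,k}$ whose $0$-cells are the intersections of the $c_i$ together with the transition points on $\partial\Sigma_{n,k}$, and whose $1$-cells are the coarse sub-arcs of the $c_i$ together with the disk-arcs and strip-sides. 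Capping off each boundary component produces a closed nonorientable surface of Euler characteristic $-n+\gcd(k,n)$, and Euler's formula then gives exactly $n(k-2)/2$ type-2 regions. Double-counting (coarse sub-arc, region) incidences and using rotational symmetry to equate $\ell_1$ across the $\gcd(k,n)$ type-1 annuli yields the identity
\[
\gcd(k,n)\,\ell_1 + \sum_{D} m_D = 2nk,
\]
where $D$ ranges over type-2 regions and $m_D$ denotes the number of coarse sub-arcs on $\partial D$. Consequently the formula $\ell_1 = 4n/\gcd(k,n)$ follows as soon as $m_D = 4$ for every type-2 region.

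The main obstacle is precisely this quadrilateral claim. The inconsistent marking already forces each type-2 region to have an even boundary side count with alternating colors, so the task is to rule out hexagons and larger even-gons. I would attempt this via a direct combinatorial analysis of how the arcs through the crosscap inside the disk arrange themselves, using the explicit cyclic order $1, s, 2, s+1, \ldots$ of interval labels to understand which triples of curves can simultaneously bound a single interior region; rotational symmetry should also allow one to reduce to a small number of orbits of type-2 regions to check.
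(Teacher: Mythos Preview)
Your plan has a genuine gap: the entire strategy hinges on the quadrilateral claim ($m_D=4$ for every type-2 region), and you only sketch an approach to it. Without that claim established, the double-counting identity does not determine $\ell_1$. More importantly, the detour through Euler characteristic and double counting is unnecessary for this lemma. The paper's argument is a one-line observation: each point of the $\Z/2n\Z$-orbit corresponding to $\beta$ (that is, each of the $2n/\gcd(k,n)$ intervals on that boundary component) has exactly two arcs of the polygonal path associated to it, so the path has $2\cdot \tfrac{2n}{\gcd(k,n)}=\tfrac{4n}{\gcd(k,n)}$ arcs. In fact your step~1 is already almost this argument, just applied to the wrong boundary of the type-1 annulus: you correctly show that $\beta$ itself decomposes into $4n/\gcd(k,n)$ arcs, alternating disk-arcs and strip-sides. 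Now simply observe that inside the type-1 annulus each strip-side of $\beta$ runs parallel to a segment of a strip core (an arc of some $c_i$) and each disk-arc of $\beta$ runs parallel to a segment of a crosscap arc (also an arc of some $c_j$); this gives a bijection between the arcs of $\beta$ and the arcs of the polygonal path, and you are done without ever touching the type-2 regions.

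As a side remark, the quadrilateral fact you are trying to prove is indeed true and is used in the paper---but only in the \emph{next} proposition, on the singularity structure, where it is asserted directly from the local picture. You have inverted the logical order: \Cref{lem:polygon-length} is the easy input, and the rectangle structure of type-2 regions is a separate (and independently simple) observation.
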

\begin{proof}
  This follows from the observation that every point in the orbit in $\Z/2n\Z$
  corresponding to the boundary component $\beta$ (see the proof of
  \Cref{lemma:bdy-components}) has two associated arcs. Since the number of
  orbits is~$\gcd(k,n)$, the length of each orbit is~$\frac{2n}{\gcd(k,n)}$,
  and hence the length of the polygonal path is twice this quantity.
\end{proof}

\begin{proposition}\label{prop:singularity-type-nonor}
  The pseudo-Anosov mapping class $f_{n,k}$ has $gcd(k,n)$ singularities, one for each boundary component. The number of prongs of each singularity is $\frac{2n}{\gcd(k,n)}$.
\end{proposition}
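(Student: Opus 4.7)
The plan is to invoke the standard dictionary from Penner's construction: the invariant foliation of the pseudo-Anosov $f_{n,k}$ is carried by the bigon track $\tau$ obtained by smoothing the intersections of $c_1,\ldots,c_n$ according to the inconsistent marking, and each complementary region of $\tau$ corresponds to a singularity of the foliation whose number of prongs equals the number of cusps on the boundary of the region. I would carry out the analysis on $\Sigma_{n,k}$ and then cap off each boundary component by a disk to obtain the closed surface and the corresponding interior singularities.

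First, I would classify the complementary regions of $c_1\cup\cdots\cup c_n$ on $\Sigma_{n,k}$ into type~1 (annuli around the boundary components, of which there are $\gcd(k,n)$ by \Cref{lemma:bdy-components}) and type~2 (disks). An Euler characteristic calculation using \Cref{lemma:euler-char}, with $V=nk/2$ intersections and $E=nk$ arcs on the curve graph, gives $n(k-2)/2$ type~2 regions. Comparing the total corner count $2nk$ with the $4n$ corners on type~1 polygonal boundaries supplied by \Cref{lem:polygon-length} leaves $2n(k-2)$ corners on type~2 regions, i.e.\ exactly four per region on average; using the rotational symmetry of the configuration and the local picture inside the disk with the crosscap, I would verify that every type~2 region is actually a 4-gon.

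Second, I would carry out the smoothing. At each intersection, the inconsistent marking cusps the two diagonally opposite quadrants determined by the two-coloring of \Cref{fig:curves}, so a type~2 4-gon smooths to a bigon with two cusps, which is a regular point of the foliation and contributes no singularity. For a type~1 region, after the enclosed boundary component is capped by a disk, one obtains a disk whose polygonal boundary has $\frac{4n}{\gcd(k,n)}$ corners by \Cref{lem:polygon-length}, and the inconsistent marking forces the cusps and non-cusps to alternate along this polygon, leaving exactly $\frac{2n}{\gcd(k,n)}$ cusps. This yields one $\frac{2n}{\gcd(k,n)}$-pronged singularity per boundary component, as claimed.

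The main obstacle is establishing the alternation of cusps along the polygonal boundary of a type~1 region; this is a local-to-global check that I would perform by tracking the red/blue color changes of \Cref{fig:curves} as one walks around the polygon, noting that color changes occur precisely at the crosscap and at the midpoints of the twisted strips, and reading off from the local picture which of the two diagonally opposite quadrants at each intersection is cusped. As a cross-check, the resulting singularity data satisfies the Euler--Poincar\'e identity $\sum(p_i-2)=-2\chi(N_g)=2(g-2)$ for orientable foliations:
\[
\gcd(k,n)\left(\frac{2n}{\gcd(k,n)}-2\right)=2(n-\gcd(k,n))=2(g-2),
\]
the last equality by \Cref{prop:nonor-surface-type}, which confirms that no further singularities can be hidden in the type~2 regions.
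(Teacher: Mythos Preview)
Your approach is essentially the same as the paper's: classify the complementary regions of $c_1\cup\cdots\cup c_n$ into type~1 (annuli around boundary components) and type~2, observe that type~2 regions are 4-gons and hence smooth to bigons with no singularity, and read off the prong count at each type~1 region from \Cref{lem:polygon-length} as half the polygonal boundary length. The paper simply asserts that the type~2 regions are rectangles and that half the corners become cusps (this alternation being a standard feature of Penner's smoothing), whereas you supply an Euler-characteristic count and an explicit color-tracking argument for these points; your Euler--Poincar\'e cross-check is a nice addition not present in the paper.
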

\begin{proof}
  Each complementary region of the curves $\{c_1,\ldots,c_n\}$ contains either
  one singularity or none. The number of prongs of a singularity equals the
  number of cusps of the bigon track obtained by the smoothing process that are
  contained in the same region as the singularity. If the number of cusps is 2,
  then the region does not contain a singularity. If the number of cusps is
  $k>2$, then it contains a $k$-pronged singularity.

  Regions of type 2 are rectangles (bounded by four subarcs of the curves
  $c_i$), and hence contain two cusps. So they do not correspond to
  singularities.

  The lengths of the polygonal paths bounding regions of type 1 are
  $\frac{4n}{\gcd(k,n)}$ by \Cref{lem:polygon-length}, so the number of cusps
  in these regions is $\frac{2n}{\gcd(k,n)}$. Therefore the singularities have
  that many prongs. By \Cref{lemma:bdy-components}, the number of such regions
  is $\gcd(k,n)$, so that is also the number of the singularities.
\end{proof}

As a corollary of
\Cref{prop:nonor-defining-poly,prop:singularity-type-nonor,prop:nonor-surface-type},
we have the following.

\begin{cor}\label{cor:construction-nonor}
  There exist pseudo-Anosov mapping classes with an orientable invariant
  foliation on the surfaces $N_g$ with the data below. All of these examples
  belong to the family $f_{n,k}$ for the $n$ and $k$ shown in the table.
  \begin{center}
    \begin{tabular}[ht]{l|l|l|l|l|l}
      $g$ & $n$ & $k$ & $\lambda(f_{n,k})$ & minimal polynomial & singularity type\\
      \hline
      4* & 3 & 2 & 1.83929 & $x^3 - x^2 - x - 1$ & (6)\\
      5* & 6 & 3 & 1.51288 & $x^4 - x^3 - x^2 + x - 1$ & (4,4,4)\\
      6* & 5 & 2 & 1.42911 & $x^5 - x^3 - x^2 - 1$ & (10)\\
      7* & 10 & 5 & 1.42198 & $x^6 - x^5 - x^3 + x - 1$ & (4,4,4,4,4) \\
      8* & 7 & 2 & 1.28845 & $x^7 - x^4 - x^3 - 1$ & (14)\\
      9 & 8 & 3 & 1.35680 & $x^8 - x^5 - x^4 - x^3 - 1$ & (16) \\
      10* & 9 & 2 & 1.21728 & $x^9 - x^5 - x^4 - 1$ & (18)\\
      11 & 12 & 3 & 1.22262 & $\frac{x^{12} - x^{7} - x^{6} - x^{5} - 1}{x^{2}
                              + x + 1}$ & (8,8,8)\\
      12* & 11 & 2 & 1.17429 & $x^{11} - x^6 - x^5 - 1$  & (22)\\
      13 & 22 & 11 & 1.27635 & $x^{12} - x^{11} - x^6 + x - 1$ & (4$^{11}$) \\
      14* & 13 & 2 & 1.14551 & $x^{13} - x^7 - x^6 - 1$ & (26)\\
      15 & 14 & 3 & 1.18750 & $x^{14} - x^{8} - x^{7} - x^{6} - 1$ & (28) \\
      16* & 15 & 2 & 1.12488 & $x^{17} - x^9 - x^8 - 1$ & (30)\\
      17 & 18 & 3 & 1.14259 & $\frac{x^{18} - x^{10} - x^{9} - x^{8} - 1}{x^{2}
                              + x + 1}$ & (12,12,12) \\
      18* & 17 & 2 & 1.10938 & $x^{19} - x^{10} - x^9 - 1$ & (34)\\
      19 & 18 & 5 & 1.20514 & $x^{18} - x^{11} - x^{10} - x^{9} - x^{8} - x^{7} - 1$
         & (36) \\
      20* & 19 & 2 & 1.09730 & $x^{23} - x^{12} - x^{11} - 1$ & (38)\\
    \end{tabular}
  \end{center}
  (4$^{11}$ means that there are 11 singularities with 4 prongs.)
\end{cor}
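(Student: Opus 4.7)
The corollary is a row-by-row tabulation of the invariants of the mapping classes $f_{n,k}$ for the parameter pairs listed, so the plan is to apply \Cref{prop:nonor-surface-type,prop:nonor-defining-poly,prop:singularity-type-nonor} in each row. For each row, set $d := \gcd(n,k)$ and $r := (n-k+1)/2$.

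First, I would verify the surface and singularity columns simultaneously. \Cref{prop:nonor-surface-type} says that $\Sigma_{n,k}$ has nonorientable genus $n - d + 2$ with $d$ boundary components; capping each boundary by a disk containing a singular point produces $N_g$ with $g = n - d + 2$. I would check this identity row by row: for the starred rows ($k=2$, $d=1$) it reduces to $g = n + 1$, while the remaining rows are verified by direct arithmetic (for example, $g=11$ with $n=12$, $k=3$, $d=3$, giving $12 - 3 + 2 = 11$). \Cref{prop:singularity-type-nonor} then states that the singularity type is $(2n/d)$ repeated $d$ times, which matches the rightmost column. The fact that $2n/d$ is even in every row is what allows one of the two invariant foliations to be orientable.

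For the polynomial column, \Cref{prop:nonor-defining-poly} gives the characteristic polynomial
\[
P_{n,k}(x) = x^n - x^{n-r} - x^{n-r-1} - \cdots - x^{r+1} - x^r - 1.
\]
In the rows with $g \in \{4, 6, 8, 9, 10, 12, 14, 15, 16, 18, 19, 20\}$, this is already the minimal polynomial listed in the table. In the remaining (reducible) rows $g \in \{5, 7, 11, 13, 17\}$, I would exhibit an explicit cyclotomic divisor of $P_{n,k}$ (for instance $x^2 + x + 1$ for $g = 5, 11, 17$) and perform polynomial long division; the quotient is the minimal polynomial shown in the table, and its largest real root is then checked to agree with the approximate value in the third column.

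The main point requiring care is the reducible rows. One must not only identify the correct cyclotomic divisor but also confirm that the stretch factor $\lambda(f_{n,k})$ is the Perron root of the remaining non-cyclotomic factor, rather than a competing root of the cyclotomic one. This is automatic from the bi-Perron property of pseudo-Anosov stretch factors: all roots of a cyclotomic polynomial have modulus exactly $1$, whereas $\lambda(f_{n,k}) > 1$, so the largest real root of $P_{n,k}$ coincides with the largest real root of the non-cyclotomic factor. With this observation in place, no further conceptual input beyond the three propositions is required, and the remainder is routine arithmetic.
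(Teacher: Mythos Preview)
Your proposal is correct and follows essentially the same approach as the paper, which simply states the corollary as an immediate consequence of \Cref{prop:nonor-surface-type,prop:nonor-defining-poly,prop:singularity-type-nonor}. Your write-up is in fact more detailed than the paper's, particularly in explaining how to extract the minimal polynomial from the (sometimes reducible) characteristic polynomial $P_{n,k}$ and why the stretch factor must come from the non-cyclotomic factor; one small inaccuracy is that not all starred rows have $k=2$ (e.g.\ $g=5,7$), but this does not affect the argument.
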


In each genus, the family $f_{n,k}$ contains several examples. In the table
above, we have listed only the example with the smallest stretch factor. In the
starred cases, we will be able to certify that the given stretch factors are not
only minimal in the family $f_{n,k}$ but among all pseudo-Anosov maps with
an orientable invariant foliation.

\section{Orientation-reversing pseudo-Anosov mapping classes on odd genus surfaces}
\label{sec:construction-rev-odd}

In this section, we construct an orientation-reversing pseudo-Anosov mapping
class with small stretch factor on every odd genus orientable surface. The construction is
analogous to the construction in the previous section, but simpler. As in the
previous section, we separate the construction of the surfaces, the curves and
finally the mapping classes.

\subsection{The surfaces}

For every $k\ge 2$, consider the surface $\Sigma_k$ obtained by chaining together $2k$ annuli in a cycle as on
\Cref{fig:even-cycle}.

\begin{figure}[ht]
  \centering
  \includegraphics[width=0.60\textwidth]{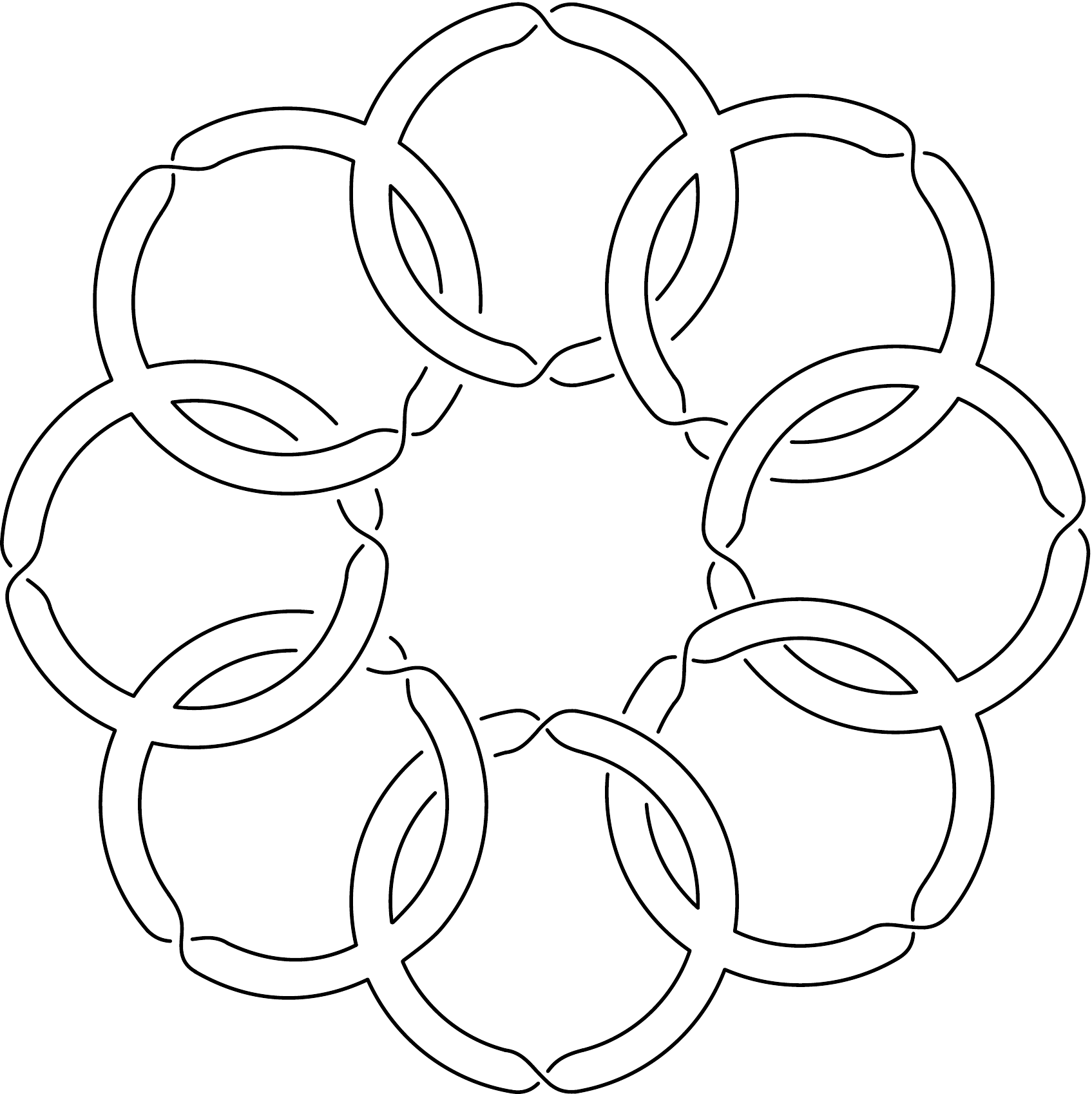}
  \caption{The surface~$\Sigma_k$.}
  \label{fig:even-cycle}
\end{figure}

\begin{proposition}\label{prop:reversing-bdy}
  The number of boundary components of $\Sigma_k$ is 4 if $k$ is even and 2 if~
  $k$ is odd.
\end{proposition}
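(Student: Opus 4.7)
My approach is to trace the boundary components of $\Sigma_k$ directly, in analogy with the proof of \Cref{lemma:bdy-components}. Each of the $2k$ annuli in the cycle contributes two boundary circles, and each junction identifies an arc on a boundary circle of one annulus with an arc on a boundary circle of the next annulus, merging those two circles. The boundary components of $\Sigma_k$ are therefore the equivalence classes of the resulting merging relation on the collection of original boundary circles, and they can be counted as the orbits of a suitable permutation on a finite cyclic label set.

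The plan is to read off the explicit gluing pattern from \Cref{fig:even-cycle} and encode it as a permutation $\sigma$ on a cyclic label set indexed by $\Z/(4k)\Z$ (a ``top'' and a ``bottom'' endpoint for each of the $2k$ annuli). The rotational symmetry of the construction forces $\sigma$ to commute with the cyclic shift by one annulus, so the orbit structure of $\sigma$ is determined entirely by the step size at which $\sigma$ advances along this cyclic set. The parity of $k$ then dictates whether this step size is coprime to $4k$ (yielding two orbits and hence $2$ boundary components) or shares an extra common factor of $2$ with $4k$ (yielding four orbits and hence $4$ boundary components), in the same $\gcd$-spirit as the dynamical system $x\mapsto x+n-k$ on $\Z/(2n)\Z$ appearing in \Cref{lemma:bdy-components}.

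To anchor the pattern I would inspect the cases $k=1$ and $k=2$ directly from \Cref{fig:even-cycle}, where one can count $2$ and $4$ boundary components respectively by following the boundary arcs by eye; the general case then follows by invoking the rotational symmetry. The main obstacle is the bookkeeping: setting up the arc labeling, correctly transcribing the junction identifications from the figure into the permutation $\sigma$, and keeping track of orientations as one crosses each junction. Once $\sigma$ is written down explicitly, the parity dichotomy reduces to an elementary $\gcd$ calculation, in complete parallel with the nonorientable analog in \Cref{lemma:bdy-components}.
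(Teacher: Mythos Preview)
Your strategy is the same as the paper's: exploit the rotational symmetry to reduce the boundary count to the orbit count of a cyclic shift, then finish with a $\gcd$ calculation. The paper observes that the boundary of $\Sigma_k$ consists of $8k$ arcs (four per annulus), checks by tracing that a boundary point $x$ lies on the same component as $r^4(x)$ (with the path from $x$ to $r^4(x)$ hitting each of the four arc-types once), and concludes that the number of components is the number of classes in $\{x,r(x),\ldots,r^{2k-1}(x)\}$ under $x\sim r^4(x)$, namely $\gcd(4,2k)$.

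What is missing from your proposal is precisely this content. You never write down the permutation $\sigma$ or compute its step size; you only assert that the parity of $k$ will govern the outcome and defer the verification to ``transcribing the junction identifications from the figure.'' But that transcription \emph{is} the proof of the proposition---everything else is formal. Your description of the gluing is also too coarse to carry it out as stated: saying ``each junction identifies an arc on a boundary circle of one annulus with an arc on a boundary circle of the next, merging those two circles'' models the plumbing as a single arc-identification per junction, whereas in fact each annulus is plumbed to two neighbours and each of its two boundary circles is cut into two arcs (hence the paper's $8k$ arcs, not the $4k$ labels you propose). If you follow your ``merge two circles per junction'' picture literally on $\Z/(4k)\Z$, you will not recover the correct step size. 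To make your plan into a proof, you need to track the $8k$ arcs (or, as the paper does, one representative per $r$-orbit of arcs) and actually verify that tracing the boundary advances by four clicks of the rotation.
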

\begin{proof}
  The boundary of $\Sigma_k$ is composed of $8k$ arcs, 4 arcs for each annulus.
  Our goal is to determine which of them belong to the same boundary component.

  Denote by $r$ the rotation of $\Sigma_k$ by one click. By tracing the
  boundary, one can see that every boundary point $x$ lies on the same boundary
  component as $r^4(x)$. Moreover, the path between $x$ and $r^4(x)$ traverses
  each of the 4 types of arcs exactly once. Therefore it suffices to pick any
  boundary point $x$ and determine into how many equivalence classes the set
  $\{x,r(x),\ldots, r^{2k-1}(x)\}$ falls apart. The number of such equivalence
  classes is 4 if $k$ is even and 2 if $k$ is odd.
\end{proof}

\begin{proposition}\label{prop:reversing-surface-type}
  The surface $\Sigma_k$ is homeomorphic to $S_{k-1,4}$ if $k$ is even and $S_{k,2}$
  if $k$ is odd.
\end{proposition}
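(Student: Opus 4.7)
The plan is to identify $\Sigma_k$ by computing its Euler characteristic, combining this with the boundary count from \Cref{prop:reversing-bdy}, verifying orientability, and then invoking the classification of compact surfaces.

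First I would compute $\chi(\Sigma_k)$. Write $\Sigma_k$ as the union of its $2k$ annuli $A_1,\dots,A_{2k}$, where $A_i$ and $A_{i+1}$ (indices taken cyclically mod $2k$) are glued along a single arc $\alpha_i$, and no three annuli share a common point. Since each annulus has Euler characteristic zero and each gluing arc has Euler characteristic one, inclusion--exclusion gives
\[
\chi(\Sigma_k) \;=\; \sum_{i=1}^{2k}\chi(A_i) \;-\; \sum_{i=1}^{2k}\chi(\alpha_i) \;=\; 2k\cdot 0 \;-\; 2k\cdot 1 \;=\; -2k.
\]

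Next I would argue orientability. Each annulus $A_i$ is orientable, and the chaining in \Cref{fig:even-cycle} attaches consecutive annuli along boundary arcs without introducing any orientation-reversing identification (in contrast with the twisted strips of \Cref{sec:construction-nonor}). Hence compatible orientations on the $A_i$ combine to a global orientation of $\Sigma_k$.

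Finally, let $b$ denote the number of boundary components of $\Sigma_k$. By \Cref{prop:reversing-bdy}, $b=4$ when $k$ is even and $b=2$ when $k$ is odd. Substituting into $\chi(S_{g,b})=2-2g-b$ and solving for $g$ yields $g=k-1$ in the even case and $g=k$ in the odd case. The classification of compact orientable surfaces with boundary then identifies $\Sigma_k$ with $S_{k-1,4}$ or $S_{k,2}$, respectively.

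The main obstacle, if any, is the combinatorial bookkeeping: one must unambiguously read off from \Cref{fig:even-cycle} that each junction of the cycle contributes exactly one gluing arc (so $2k$ arcs in total) and that none of these identifications reverses orientation. Once these two points are recorded, the remainder is a direct Euler characteristic computation together with the classification theorem.
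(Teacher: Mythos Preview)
Your proposal is correct and follows the same strategy as the paper: compute $\chi(\Sigma_k)$, combine with the boundary count from \Cref{prop:reversing-bdy}, and solve $\chi=2-2g-b$ for $g$. The paper's proof is a two-line version that simply asserts the value of $\chi(\Sigma_k)$ and takes orientability for granted; you supply the inclusion--exclusion computation and an explicit orientability check, which is a welcome addition. One minor quibble: in the chained-annuli picture of \Cref{fig:even-cycle}, adjacent annuli overlap in a square (a disk) rather than an arc, but since a disk and an arc both have Euler characteristic $1$, your computation is unaffected.
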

\begin{proof}
  We have $\chi(\Sigma_k) = 2k$. From the equation $\chi = 2-2g-b$, where $g$ is
  the genus and $b$ is the number of boundary components, it follows that $g =
  k+1-\frac{b}2$. The statement now follows from \Cref{prop:reversing-bdy}.
\end{proof}

As a consequence, the construction only produces odd genus examples.

\subsection{The curves}

From now on, suppose that $k$ is even. Consider the set
$C = \{c_1,\ldots,c_{2k}\}$ of core curves of the $2k$ annuli. Our numbering
will differ from the standard cyclic numbering; we will explain this shortly.
As in \Cref{sec:curves}, any rotationally symmetric marking of the curves is an
inconsistent marking.

The intersection graph of $C$ is a cycle of length $2k$. We draw this
cycle as on \Cref{fig:anternative-cycle}: the vertices are the vertices of a
regular polygon and every vertex is connected to the two vertices that are the
second furthest in the cyclic order. We number the curves according to the
cyclic orientation induced by this picture.
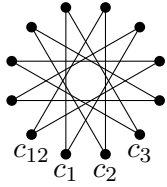
\begin{figure}[ht]
  \centering
  \begin{tikzpicture}
    \node[draw=none,minimum size=2cm,regular polygon,regular polygon sides=12] (a) {};
    \foreach \x in {1,2,...,12}{
      \fill (a.corner \x) circle[radius=2pt];
      \foreach \y in {5}{
        \pgfmathtruncatemacro{\j}{mod(\x+\y-1,12)+1}
        \draw (a.corner \x) -- (a.corner \j);
      }
    }
    \node[below] at (a.corner 7) {$c_1$};
    \node[below] at (a.corner 8) {$c_{2}$};
    \node[below] at (a.corner 9) {$c_3$};
    \node[below] at (a.corner 6) {$c_{12}$};
  \end{tikzpicture}
  \caption{Our unusual way of numbering the curves. For example, the curve $c_1$
    intersects $c_{k}$ and $c_{k+2}$, not $c_2$ and $c_{2k}$.}
  \label{fig:anternative-cycle}
\end{figure}

\subsection{The mapping classes}

Denote by $r$ the rotation of $\Sigma_k$ (see \Cref{fig:even-cycle}) by one
click in the clockwise direction. Since $c_i$ and $r(c_i)$ intersect for all
$i$, the rotation $r$ induces a rotation of the cycle on
\Cref{fig:anternative-cycle} by $k-1$ clicks. So $r^{k-1}$ rotates the cycle by
$(k-1)^2 = k^2-2k+1$ clicks, which is congruent to 1 modulo $2k$ if $k$ is
even. Therefore $r^{k-1}$ induces rotating the cycle on
\Cref{fig:anternative-cycle} by one click (in the clockwise direction, assuming
that we have chosen the numbering of the curves accordingly). In particular, we
have $r^{k-1}(c_{i+1}) = c_i$.

We are now ready to define the mapping class:
\begin{displaymath}
  \psi_{k} = r^{k-1}\circ T_{c_1}.
\end{displaymath}
Note that
\begin{displaymath}
  \psi_k^{2k} = T_{c_{2k}} \circ \cdots \circ T_{c_1},
\end{displaymath}
so $\psi_{k}^{2k}$ arises from Penner's construction. 
In particular, $\psi_{k}^{2k}$ is pseudo-Anosov and so is~$\psi_k$.
Note that while~$\psi_{k}^{2k}$ is orientation-preserving,~$\psi_k$ is orientation-reversing. 
This follows from the observation that~$T_{c_1}$ is orientation-preserving 
and~$r$ is orientation-reversing. 

\begin{proposition}\label{prop:reversing-defining-poly}
  The stretch factor of $\psi_k$ is the largest root of $x^{2k} - x^{k+1} -x^{k-1} - 1$.
\end{proposition}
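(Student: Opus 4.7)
The plan is to copy the strategy of the proof of \Cref{prop:nonor-defining-poly}. Smoothing every intersection of the curves in $C=\{c_1,\dots,c_{2k}\}$ gives an invariant bigon track $\tau$ (invariant under both $T_{c_1}$ and $r^{k-1}$, since $C$ is preserved by both up to reordering). Each $c_i$ determines a characteristic transverse measure $\mu_i$ assigning $1$ to every branch it traverses and $0$ otherwise. The cone $\mathcal{C}=\mathbf{R}_{\ge 0}\langle \mu_1,\dots,\mu_{2k}\rangle$ is preserved by both $T_{c_1}$ and $r^{k-1}$, hence by $\psi_k$. Because the bigon track carries the unstable foliation of $\psi_k$, and because $\mathcal C$ is invariant under $\psi_k$, the stretch factor equals the Perron--Frobenius eigenvalue of the matrix $M$ of $\psi_k|_\mathcal{C}$ in the basis $\{\mu_1,\dots,\mu_{2k}\}$.

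Next I will write down $M$ explicitly. The twist formula $T_{c_1}(\mu_i)=\mu_i+i(c_1,c_i)\mu_1$ shows that the matrix $[T_{c_1}]$ equals the identity plus a matrix whose only nonzero row is the first, and that row records intersection numbers with $c_1$. By our labeling convention in \Cref{fig:anternative-cycle}, $c_1$ meets exactly $c_k$ and $c_{k+2}$, so the first row of $[T_{c_1}]-I$ has $1$s in columns $k$ and $k+2$ and zeros elsewhere. Since $r^{k-1}(c_{i+1})=c_i$, the rotation acts on measures by $r^{k-1}(\mu_i)=\mu_{i-1}$ (indices modulo $2k$), i.e.\ by the cyclic permutation matrix sending $e_i$ to $e_{i-1}$. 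Composing, one finds that $[\psi_k]e_i=e_{i-1}+i(c_1,c_i)e_{2k}$, so the first $2k-1$ rows have the shape $M_{j,j+1}=1$ (and zeros elsewhere in those rows), while the last row has $1$s in columns $1$, $k$, and $k+2$ only. Concretely:
\begin{displaymath}
M=\begin{pmatrix}
0 & 1 & 0 & \cdots & 0 & 0 & 0 & \cdots & 0\\
0 & 0 & 1 & \cdots & 0 & 0 & 0 & \cdots & 0\\
\vdots & & & \ddots & & & & & \vdots\\
0 & 0 & 0 & \cdots & 0 & 0 & 0 & \cdots & 1\\
1 & 0 & \cdots & 0 & 1 & 0 & 1 & 0 & \cdots & 0
\end{pmatrix},
\end{displaymath}
where the three nonzero entries in the last row sit in columns $1$, $k$, and $k+2$.

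This is precisely the companion matrix of $x^{2k}-x^{k+1}-x^{k-1}-1$: recall that the companion matrix of $x^n-a_{n-1}x^{n-1}-\cdots-a_1x-a_0$ has last row $(a_0,a_1,\dots,a_{n-1})$, and here the only nonzero coefficients in the monic polynomial $x^{2k}-x^{k+1}-x^{k-1}-1$ correspond exactly to $a_0=1$, $a_{k-1}=1$, $a_{k+1}=1$. Hence the characteristic polynomial of $M$ is $x^{2k}-x^{k+1}-x^{k-1}-1$, and the stretch factor of $\psi_k$ is its largest root.

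The only genuinely nonroutine point is bookkeeping: one must verify that the numbering introduced around \Cref{fig:anternative-cycle} is set up so that $r^{k-1}$ really acts as the single shift $\mu_i\mapsto \mu_{i-1}$ in this basis (this is exactly the content of the calculation $(k-1)^2\equiv 1\pmod{2k}$ for even $k$ given in the preceding paragraph of the paper), and so that the two neighbors of $c_1$ in the intersection cycle are indexed $k$ and $k+2$. Once the indexing is fixed, the matrix computation and the identification with the desired companion matrix are immediate.
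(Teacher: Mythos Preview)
Your proof is correct and follows essentially the same approach as the paper's: both invoke Penner's bigon track and the cone spanned by the characteristic measures $\mu_i$, compute the action of $\psi_k$ on this cone as the product of a permutation matrix and the twist matrix, and recognize the result as the companion matrix of $x^{2k}-x^{k+1}-x^{k-1}-1$. The paper merely displays the matrices $i(C,C)$ and $M$ explicitly for $k=4$ and refers back to \Cref{prop:nonor-defining-poly}, whereas you have written out the general argument in more detail.
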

\begin{proof}
  The proof is similar to the proof of \Cref{prop:nonor-defining-poly}. We have
  \begin{displaymath}
    i(C,C) = \begin{pmatrix}
      0 & 0 & 0 & 1 & 0 & 1 & 0 & 0 \\
      0 & 0 & 0 & 0 & 1 & 0 & 1 & 0 \\
      0 & 0 & 0 & 0 & 0 & 1 & 0 & 1 \\
      1 & 0 & 0 & 0 & 0 & 0 & 1 & 0 \\
      0 & 1 & 0 & 0 & 0 & 0 & 0 & 1 \\
      1 & 0 & 1 & 0 & 0 & 0 & 0 & 0 \\
      0 & 1 & 0 & 1 & 0 & 0 & 0 & 0 \\
      0 & 0 & 1 & 0 & 1 & 0 & 0 & 0
    \end{pmatrix}
    \quad \mbox{and} \quad
    M = \begin{pmatrix}
      0 & 1 & 0 & 0 & 0 & 0 & 0 & 0 \\
      0 & 0 & 1 & 0 & 0 & 0 & 0 & 0 \\
      0 & 0 & 0 & 1 & 0 & 0 & 0 & 0 \\
      0 & 0 & 0 & 0 & 1 & 0 & 0 & 0 \\
      0 & 0 & 0 & 0 & 0 & 1 & 0 & 0 \\
      0 & 0 & 0 & 0 & 0 & 0 & 1 & 0 \\
      0 & 0 & 0 & 0 & 0 & 0 & 0 & 1 \\
      1 & 0 & 0 & 1 & 0 & 1 & 0 & 0
    \end{pmatrix}
  \end{displaymath}
  where $M$ is the matrix of the action of $\psi_k$ on the cone of measures (the
  product of a permutation matrix and the sum of the identity matrix and the
  first row of $i(C,C)$). The matrices above illustrate the case $k=4$. The
  matrix $M$ is the companion matrix of the polynomial in the proposition.
\end{proof}

\begin{proposition}\label{prop:singularity-type-rev}
  The pseudo-Anosov mapping class $\psi_k$ has four $k$-pronged singularities.
\end{proposition}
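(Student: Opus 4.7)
I would follow the strategy of the proof of \Cref{prop:singularity-type-nonor}. By \Cref{prop:reversing-bdy}, the surface $\Sigma_k$ has $4$ boundary components when $k$ is even; after capping off these boundaries to obtain the closed surface $S_{k-1}$, each former boundary component becomes a singularity of $\psi_k$. This already yields $4$ singularities, and the remaining task is to count the number of prongs at each.

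The plan is to analyze the cell decomposition of $S_{k-1}$ induced by the curves $C=\{c_1,\ldots,c_{2k}\}$. Since the intersection graph is the cycle of length $2k$, there are $V=2k$ vertices, and with each curve split into two arcs by its two intersection points, there are $E=4k$ edges. Because $\chi(S_{k-1})=4-2k$, Euler's formula yields $F=4$. The filling property of the curves forces each boundary component of $\Sigma_k$ to lie in its own complementary region, so all four faces are ``type-1'' regions---each an annulus in $\Sigma_k$ bounded on one side by the surface boundary---and there are no ``type-2'' rectangles. Moreover, by the rotational symmetry of $\Sigma_k$, the rotation $r$ acts transitively on the $4$ boundary components (evident from the proof of \Cref{prop:reversing-bdy}, which shows that $r^4$ is the first power of~$r$ preserving each component) and therefore also on the $4$ regions; so by equidistribution of the $2E=8k$ arc-region incidences, each region is bounded by $2k$ arcs.

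Finally, as in the proof of \Cref{prop:singularity-type-nonor}, the inconsistency of the marking means that at each of the $V=2k$ intersection points the smoothed bigon track contributes a cusp to exactly $2$ of the $4$ adjacent regions, giving a total of $2V=4k$ cusps distributed, by the transitivity above, as $k$ cusps in each of the $4$ regions. Since the number of prongs at the singularity in a region equals its cusp count, each singularity has $k$ prongs. The main obstacle in carrying this out is the global bookkeeping on the region decomposition---confirming that the $F=4$ faces coincide with the $4$ boundary components (and that no type-2 regions hide in the count) and that $r$ permutes them transitively---after which the cusp count and hence the prong count are immediate.
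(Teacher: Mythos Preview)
Your argument is correct and follows the same underlying strategy as the paper: identify the singularities with the boundary components of $\Sigma_k$ and count prongs by counting cusps (equivalently, half the arcs) along each complementary region. The mechanics differ, however. The paper's proof is two sentences: from the proof of \Cref{prop:reversing-bdy} one reads off directly that $\partial\Sigma_k$ consists of $8k$ arcs split evenly among the four components, so each boundary path has $2k$ arcs, and with a prong at every second corner this gives $k$ prongs. You instead recover the $2k$-arcs-per-region count indirectly, via the Euler-characteristic identity $V-E+F=\chi(S_{k-1})$ to get $F=4$ (hence no type-2 rectangles), and then use transitivity of $\langle r\rangle$ on the boundary components to equidistribute the $2E=8k$ arc-incidences and the $2V=4k$ cusps. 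Both routes are valid; the paper's is shorter because the arc count is already available from \Cref{prop:reversing-bdy}, while your Euler-characteristic argument is a more self-contained bookkeeping that does not depend on revisiting that earlier proof.
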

\begin{proof}
  By \Cref{prop:reversing-bdy} and its proof, each of the four boundary
  components of~$\Sigma_k$ consists of~$2k$ arcs if~$k$ is even. There is a
  prong for every second corner of the boundary path, therefore there are $k$
  prongs for each singularity.
\end{proof}

\begin{cor}\label{cor:construction-reversing}
  There exist orientation-reversing pseudo-Anosov mapping classes with
  orientable invariant foliations on the surfaces $S_g$ with the data below.
  All of these examples belong to the family $\psi_k$ for the $k$ shown in the
  table.
  \begin{center}
    \begin{tabular}[ht]{l|l|l|l|l|l}
      $g$ & $k$ & $\lambda(\psi_k)$ & largest root of & singularity type\\
      \hline
      1 & 2 & 1.61803 & $x^2 - x - 1 = \frac{x^4-x^3-x-1}{x^2+1}$ & no singularities\\
      3 & 4 & 1.25207 & $x^8 - x^5 - x^3 - 1$ & (4,4,4,4) \\
      5 & 6 & 1.15973 & $x^{12} - x^7 - x^5 - 1$ & (6,6,6,6) \\
      7 & 8 & 1.11707 & $x^{16} - x^9 - x^7 - 1$ & (8,8,8,8) \\
      9 & 10 & 1.09244 & $x^{20}- x^{11} - x^9 - 1$ & (10,10,10,10)\\
      11 & 12 & 1.07638 & $x^{24} - x^{13} - x^{11} - 1$ & (12,12,12,12) \\
    \end{tabular}
  \end{center}
\end{cor}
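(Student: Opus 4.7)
The plan is to assemble the preceding propositions of this section into the table; the corollary is a summary statement, so most of the work is done and what remains is bookkeeping plus a few qualitative verifications.

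First I would restrict to even $k$, since Proposition \ref{prop:reversing-surface-type} identifies $\Sigma_k$ with $S_{k-1,4}$ only in that case. Capping off the four boundary components by disks yields the closed orientable surface $S_{k-1}$; since $r^{k-1}$ and $T_{c_1}$ both preserve the boundary setwise (and $T_{c_1}$ is supported in a collar of $c_1$ disjoint from the boundary), the mapping class $\psi_k$ extends across the caps, placing a cone point at each cap center. Running $k$ through $\{2,4,6,8,10,12\}$ then gives $g=k-1\in\{1,3,5,7,9,11\}$, reproducing the first two columns of the table.

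The stretch factor column follows directly from Proposition \ref{prop:reversing-defining-poly}: $\lambda(\psi_k)$ is the largest root of $p_k(x) = x^{2k}-x^{k+1}-x^{k-1}-1$, with the numerical values then obtained by routine root-finding. For $k=2$ one factors $p_2(x)=(x^2-x-1)(x^2+1)$, which accounts for the displayed reduction in the $g=1$ row and yields the golden ratio. Proposition \ref{prop:singularity-type-rev} supplies the singularity data, namely four $k$-pronged singularities, one at each cap; for $k=2$ these are 2-pronged and hence regular points, so the torus entry correctly records no singularities.

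It remains to verify two qualitative assertions. That $\psi_k$ is orientation-reversing was observed immediately after its definition: $r$ reverses orientation, $T_{c_1}$ preserves it, and $k-1$ is odd. Orientability of the invariant foliation requires showing that $\lambda(\psi_k)$ appears as an eigenvalue of the action of $\psi_k$ on $H_1(S_{k-1};\R)$. My approach would be to identify a $\psi_k$-invariant subspace of the measure cone of Proposition \ref{prop:reversing-defining-poly} with a subspace of first homology, and then extract from $p_k(x)$ a factor of degree $2g=2k-2$ (expected to be $p_k(x)/(x^2+1)$) as the homological characteristic polynomial; the factor $x^2+1$ then accounts for relations among the $[c_i]$ in $H_1$. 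This last identification is the only delicate point of the argument --- the remainder is direct bookkeeping against propositions already proved.
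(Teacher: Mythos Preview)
Your proposal is correct and takes essentially the same route as the paper: assemble \Cref{prop:reversing-surface-type}, \Cref{prop:reversing-defining-poly}, and \Cref{prop:singularity-type-rev}, then note that for $k=2$ the four ``singularities'' are 2-pronged and hence not singularities at all. The paper's proof is exactly this, in two sentences. Your additional remarks about capping the boundary and about $\psi_k$ being orientation-reversing (citing the observation made right after its definition) are correct and are things the paper leaves implicit.

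The one place you go further than the paper is in flagging orientability of the invariant foliations as something to verify; the paper's proof does not address this and simply treats it as part of the construction. Your proposed homological route---map the measure cone to $H_1$ via $\mu_i \mapsto [c_i]$, check that the relations among the $[c_i]$ account precisely for the factor $x^2+1$ of $p_k(x)$, so that $\lambda(\psi_k)$ survives as an eigenvalue on $H_1$---is a valid way to close this, and the factorization $p_k(x)=(x^2+1)q_k(x)$ with $\deg q_k = 2k-2 = 2g$ does hold for even $k$ (one checks $p_k(i)=(-1)^k-1=0$). So the point you single out as ``the only delicate point'' is indeed the one substantive thing not already handled by the cited propositions, and your outline for it is sound.
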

\begin{proof}
  The statement follows from
  \Cref{prop:reversing-defining-poly,prop:singularity-type-rev,prop:reversing-surface-type}.
  The reason we have no singularities in the genus 1 case is that by
  \Cref{prop:singularity-type-rev} the ``singularities'' have two
  prongs, so they are not actually singularities.
\end{proof}

We remark that in the genus 1, there is an alternative, simpler construction that yields the
same stretch factor. Consider the matrix
\begin{displaymath}
  M =
  \begin{pmatrix}
    0 & 1 \\
    1 & 1 \\
  \end{pmatrix}
\end{displaymath} with determinant $-1$. The corresponding linear map $\R^2\to
\R^2$ maps $\Z^2$ to $\Z^2$, hence it descends to an Anosov diffeomorphism $f$ of
the torus $\R^2/\Z^2$. Its stretch factor is the largest root of the $x^2-x-1$,
the characteristic polynomial of $M$.

\section{Restrictions on polynomials}
\label{sec:poly-properties}

Pseudo-Anosov stretch factors are roots of integral polynomials. The properties
of these integral polynomials are similar, but slightly different depending on
whether a pseudo-Anosov mapping class is an orientation-preserving or
orientation-reversing mapping class on an orientable surface or a mapping class
on a nonorientable surface. In this section, we discuss these properties for
nonorientable surfaces and orientation-reversing mapping classes.

A polynomial $p(x)$ of degree $n$ is called \emph{reciprocal} if $p(x)
= \pm x^n p(x^{-1})$, in other words, when its coefficients are the same
in reverse order up to sign. Analogously, we define~$p(x)$ to be
\emph{skew-reciprocal} if $p(x) = \pm x^n p(-x^{-1})$.

\begin{proposition}\label{prop:nonor-poly-properties}
  Let $\psi: N_g\to N_g$ be a pseudo-Anosov map with a transversely orientable invariant
  foliation on the closed non-orientable surface $N_g$ of genus $g$. Then its
  stretch factor $\lambda$ is a root of a (not necessarily
  irreducible) polynomial $p(x) \in \Z[x]$ with the following properties:
  \begin{enumerate}
  \item\label{item:degree-nonor} $\deg(p) = g-1$.
  \item\label{item:monic} $p(x)$ is monic and its constant coefficient is $\pm 1$.
  \item\label{item:other-roots-nonor} The absolute values of the roots of
    $p(x)$ other than $\lambda$ lie in the open interval
    $(\lambda^{-1}, \lambda)$. In particular, $p(x)$ is not reciprocal or
    skew-reciprocal.
  \item\label{item:recip-mod-two} $p(x)$ is reciprocal mod 2.
  \end{enumerate}
\end{proposition}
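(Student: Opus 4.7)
The plan is to let $p(x)$ denote the characteristic polynomial of $\psi_*$ acting on the free quotient $H_1(N_g;\Z)/\mathrm{tors} \cong \Z^{g-1}$. Properties (1) and (2) fall out immediately from the structure $H_1(N_g;\Z) \cong \Z^{g-1} \oplus \Z/2$ together with the fact that $\psi$ is a homeomorphism (giving $\det \psi_* = \pm 1$), so $p$ has degree $g-1$, is monic, and has constant term $\pm 1$.

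For property (3), I would first note that the transverse orientability of $\calF^u$ promotes the transverse measure $\mu^u$ to a nonzero class $[\mu^u] \in H^1(N_g;\R)$ satisfying $\psi^*[\mu^u] = \lambda[\mu^u]$, so $\lambda$ is a root of $p$. The upper bound on the other roots is the standard Perron--Frobenius argument: the action on $H^1$ is a quotient of the transition matrix on an invariant train track, which is primitive non-negative with simple and strictly dominant eigenvalue $\lambda$. For the lower bound, I would apply the same reasoning to $\psi^{-1}$, whose unstable foliation is $\calF^s$. The essential geometric input is that on a nonorientable surface the two invariant foliations cannot both be transversely orientable (otherwise the surface would be oriented by them), so $\calF^s$ is \emph{not} transversely orientable. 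Consequently, the Perron eigenvector of the $\psi^{-1}$ transition matrix maps to zero in $H^1(N_g;\R)$, and the spectral radius of $(\psi^{-1})_*$ on $H^1$ is strictly less than $\lambda$; dualizing, every root of $p$ has modulus strictly greater than $\lambda^{-1}$. The non-reciprocality and non-skew-reciprocality are then immediate: either would force $\pm\lambda^{-1}$ to be a root, contradicting the strict bound.

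For property (4), I would pass to the action on $H_1(N_g;\Z/2) \cong (\Z/2)^g$, which preserves the non-degenerate symmetric $\Z/2$-intersection form. The standard computation for an isometry of a symmetric bilinear form shows that its characteristic polynomial is reciprocal (over $\Z/2$ the sign issues simplify). Using the decomposition induced by the universal coefficient theorem, this polynomial factors as $\bar p(x)(x-1)$, where the $(x-1)$ accounts for the trivial action of $\psi_*$ on the distinguished $\Z/2$-torsion class in $H_1(N_g;\Z)$. Since $(x-1)$ is itself reciprocal mod 2, dividing shows that $\bar p(x)$ is reciprocal mod 2.

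The main obstacle is the strict lower bound in property (3). The upper bound is fairly routine, but the lower bound relies on a more delicate claim: that the Perron eigenvector of the train track transition matrix for $\psi^{-1}$ has zero image in $H^1(N_g;\R)$ precisely because $\calF^s$ fails to be transversely orientable. Making this image-vanishing precise is where the geometric fact about transverse orientability on nonorientable surfaces enters, and it is the crucial input distinguishing the nonorientable case from the orientable one (where $\lambda^{-1}$ would be a root coming from the transversely orientable stable foliation).
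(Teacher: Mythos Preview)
Your arguments for properties (1), (2), and (4) are correct. In particular, for (4) you have essentially reproduced the argument behind the paper's citation to \cite{StrennerSAF}: the mod-2 intersection form on $H_1(N_g;\Z/2)\cong(\Z/2)^g$ is nondegenerate, an isometry has reciprocal characteristic polynomial over $\Z/2$, and the $\psi_*$-equivariant short exact sequence $0\to\Z/2\to H_1(N_g;\Z/2)\to (\Z/2)^{g-1}\to 0$ (torsion image as sub, free quotient mod~2 as quotient) gives the factorisation $(x+1)\bar p(x)$, from which $\bar p$ reciprocal follows. This is a genuine addition relative to the paper's proof, which simply cites the result.

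For property (3), your geometric insight---that the non-transverse-orientability of $\calF^s$ is exactly what prevents $\pm\lambda^{-1}$ from appearing---is the right one, but the train-track formulation has a gap. The linear map from the weight space of a train track to $H^1(N_g;\R)$ is defined by pairing weights against loops using a \emph{transverse} orientation of the track. For a track carrying $\calF^s$ there is no such co-orientation (that is precisely your hypothesis), so it is not that the Perron eigenvector ``maps to zero''---there is no equivariant map $W(\tau')\to H^1(N_g;\R)$ to speak of. The paper instead passes to the orientable double cover $S_{g-1}$: there both foliations are transversely orientable, the standard theory gives simple eigenvalues $\pm\lambda$ and $\pm\lambda^{-1}$ on $H^1(S_{g-1};\R)$ with all others strictly between, and $H^1(N_g;\R)$ is identified with the $+1$-eigenspace of the deck involution~$\iota$. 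Your observation then becomes the clean statement that $\iota^*[\tilde\calF^s]=-[\tilde\calF^s]$ (because the transverse orientation of $\tilde\calF^s$ does not descend), so the $\pm\lambda^{-1}$ eigenvector lies in $H^1_-$ and never contributes to $p$. This is the content of the result from \cite{StrennerSAF} that the paper cites.

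One minor omission: you assert $\psi^*[\mu^u]=\lambda[\mu^u]$, but $\psi$ may reverse the chosen transverse orientation of $\calF^u$, giving eigenvalue $-\lambda$ instead. The paper handles this at the end by replacing $p(x)$ with $\pm p(-x)$ if necessary; you should do the same.
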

\begin{proof}
  Note that exactly one of the stable and unstable foliations is transversely
  orientable (otherwise the surface itself would be orientable). We will assume
  that it is the stable foliation, otherwise we replace $\psi$ by its inverse.

  Consider the action $\psi^*:H^1(N_g;\R)\to H^1(N_g;\R)$ defined by pullback
  on cohomology with real coefficients. Since the stable foliation is transversely 
  orientable, it is represented by a closed real 1-form, that is, an element of
  $H^1(N_g;\R)$. The stable foliation $\calF^s$ is the one whose leaves are
  contracting and hence the surface is expanding in the transverse direction.
  Therefore the measure of a transverse arc in the pullback $\psi^*(\calF^s)$ is
  $\lambda$ times its measure in $\calF^s$. Hence $\calF^s$ is an eigenvector of
  the map $\psi^*$ with eigenvalue~$\lambda$ or~$-\lambda$.

  Let $p(x)$ be the characteristic polynomial of $\psi^*$. Note that
  $\dim(H^1(N_g,\R)) = g-1$, hence $\deg(p) = g-1$. This proves
  (\ref{item:degree-nonor}).

  The polynomial $p(x)$ has integral coefficients, since $\psi^*$ restricts to
  an action $H^1(N_g;\Z) \to H^1(N_g;\Z)$. This restriction is invertible,
  since the action of $\Mod(N_g)$ on $H^1(N_g;\Z)$ is a group representation,
  so the determinant of $\psi^*$ is $\pm 1$. Therefore the constant coefficient
  of $p(x)$ is $\pm 1$. Also, as a characteristic polynomial, $p(x)$ is monic.
  This proves (\ref{item:monic}).

  It is a standard fact from the theory of orientation-preserving pseudo-Anosov 
  mapping classes on orientable surfaces that stretch factors are strictly maximal among 
  their Galois conjugates. Moreover, in case the invariant foliations are orientable, the 
  spectral radius of the action induced on the first homology equals the stretch factor, and 
  every other eigenvalue is strictly smaller, see, for example,~\cite[Theorem 5.3
  (1)]{McMullen03a}. Applying this result to the orientation-preserving lift 
  $\tilde\psi: S_{g-1} \to S_{g-1}$ of $\psi$ to the orientable double cover 
  $S_{g-1}$ of $N_g$, we obtain that any root $\lambda'$ of
  $p(x)$ other than $\pm \lambda$ satisfies~$|\lambda'| < |\lambda|$. Applying
  the same theorem for $\tilde{\psi}^{-1}$, we conclude that any root
  $\lambda'$ of~$p(x)$ other than~$\pm \lambda^{-1}$ satisfies
  $|\lambda'^{-1}| < |\lambda|$. Therefore absolute values of the roots of~$p(x)$
  other than~$\pm \lambda$ and possibly~$\pm \lambda^{-1}$ lie in the open
  interval $(\lambda^{-1}, \lambda)$. However, it was shown in the proof of
  \cite[Proposition 2.3]{StrennerSAF} that if $\lambda$ or $-\lambda$ is a root
  of $p(x)$, then~$\lambda^{-1}$ and~$-\lambda^{-1}$ cannot be roots of $p(x)$, hence
  mentioning the edge case $\pm \lambda^{-1}$ in the previous sentence is not
  necessary.

  If $p(x)$ was reciprocal, then $\lambda$ and $\lambda^{-1}$ or $-\lambda$ and
  $-\lambda^{-1}$ would have to be roots. If it was skew-reciprocal, then
  $\lambda$ and $-\lambda^{-1}$ or $-\lambda$ and $\lambda^{-1}$ would have to be
  roots. As we have just shown, these scenarios are impossible, because
  $\pm \lambda^{-1}$ is not a root of $p(x)$. This proves
  (\ref{item:other-roots-nonor}).

  The fact that $p(x)$ is reciprocal mod 2 was shown in \cite[Proposition
  4.2]{StrennerSAF}. This justifies (\ref{item:recip-mod-two}).

  Finally, notice that we have not guaranteed that $\lambda$ is a root of
  $p(x)$---we have only shown that either $\lambda$ or $-\lambda$ is a root. If
  it is $-\lambda$, then the polynomial $p(-x)$ or $-p(-x)$ satisfies all
  the required properties.
\end{proof}

We call a $2n \times 2n$ matrix $A$ \emph{anti-symplectic} if it the
corresponding linear transformation sends the standard symplectic form on
$\R^{2n}$ to its negative. Formally, this can be written as
\begin{displaymath}
  AJA^T = -J
\end{displaymath}
where $J =
\begin{pmatrix}
  0 & I \\
  -I & 0 \\
\end{pmatrix}$ and $I$ is the $n\times n$ identity matrix.

\begin{proposition}\label{prop:anti-simplectic-charpoly}
  The characteristic polynomial $p(x)$ of a $2n\times 2n$ anti-symplectic
  matrix is skew-reciprocal.
\end{proposition}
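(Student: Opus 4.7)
The plan is to show that $-A^{-1}$ is similar to $A^{T}$, and then read off skew-reciprocity from a determinant computation. This is the standard ``symplectic implies reciprocal'' argument, with a sign tweak.

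First I would extract basic consequences from $AJA^{T}=-J$. Taking determinants gives $\det(A)^{2}\det(J)=(-1)^{2n}\det(J)=\det(J)$, hence $\det(A)=\pm 1$; in particular $A$ is invertible. Solving the defining identity for $A^{T}$ yields $A^{T}=-J^{-1}A^{-1}J$, which exhibits $-A^{-1}$ as the conjugate of $A^{T}$ by $J$. Since the characteristic polynomial is invariant under both transposition and conjugation, $A$, $A^{T}$ and $-A^{-1}$ share the same characteristic polynomial, namely $p(x)$.

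Next I would compute the characteristic polynomial of $-A^{-1}$ directly in terms of $p$. Factoring $A^{-1}$ on the left of $xI+A^{-1}$ gives
\begin{displaymath}
\det\!\bigl(xI-(-A^{-1})\bigr)=\det(xI+A^{-1})=\det(A^{-1})\,\det(xA+I).
\end{displaymath}
Pulling a factor of $x$ out of each column of $xA+I$ turns this into $x^{2n}\det(A+x^{-1}I)$, and since $\det(A+x^{-1}I)=\det(-(-x^{-1}I-A))=p(-x^{-1})$ (using that $2n$ is even), we obtain $\det(xA+I)=x^{2n}p(-x^{-1})$. Combining everything with $\det(A^{-1})=\pm 1$ gives $p(x)=\pm x^{2n}p(-x^{-1})$, which is exactly the skew-reciprocal condition.

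I do not anticipate any real obstacle in carrying this out. The only point that needs mild care is bookkeeping the signs coming from $\det(A)=\pm 1$ and from $\det(-M)=(-1)^{2n}\det(M)$ on even-dimensional space; both are harmless because the definition of skew-reciprocity already allows an overall $\pm$.
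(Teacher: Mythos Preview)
Your proof is correct and follows essentially the same idea as the paper's: both exploit the relation $AJA^{T}=-J$ together with $\det(A)=\pm 1$ to pass from $p(x)$ to $x^{2n}p(-x^{-1})$. The only difference is organizational: you first isolate the similarity $A^{T}=J^{-1}(-A^{-1})J$ and then compute the characteristic polynomial of $-A^{-1}$, whereas the paper runs a single chain of determinant identities $\det(A-xI)=\det(AJ-xJ)=\det(AJ+xAJA^{T})=\pm\det(I+xA)=\pm x^{2n}p(-x^{-1})$ without naming the similarity explicitly.
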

\begin{proof}
  Let $A$ be an $2n\times 2n$ anti-symplectic matrix. Since $A^2$ is
  symplectic, we have $\det(A^2) = 1$ and $\det(A) = \pm 1$. Since
  $\det(J) = 1$, we have
  \begin{align*}
    p(x) &= \det(A - xI) = \det(AJ - xJ) = \det(AJ + xAJA^T) \\
         &= \det(A)\det(J)\det(I+xA^T) =  \pm \det(I+xA)\\
         &= \pm x^{2n} \det\left(A + x^{-1}I\right) = \pm x^{2n} p(-x^{-1}),
  \end{align*}
  hence $p(x)$ is skew-reciprocal.
\end{proof}

The proof above is a straightforward modification of the standard proof of the
fact that the characteristic polynomials of symplectic matrices are reciprocal.

\begin{proposition}\label{prop:reversing-poly-properties}
  Let $\psi: S_g\to S_g$ be an orientation-reversing pseudo-Anosov map with
  transversely orientable invariant foliations. Then its stretch factor
  $\lambda$ is a root of a (not necessarily irreducible) polynomial
  $p(x) \in \Z[x]$ with the following properties:
  \begin{enumerate}
  \item\label{item:degree} $\deg(p) = 2g$.
  \item\label{item:const-coeff} $p(x)$ is monic and its constant coefficient is
    $(-1)^g$.
  \item\label{item:rev-poly} $p(x) = (-1)^g x^{2g}p(-x^{-1})$.
  \item\label{item:reciprocal-root} $p(-\lambda^{-1}) = 0$.
  \item\label{item:galois-conjugates-rev} The absolute values of the roots of
    $p(x)$ other than $\lambda$ and $-\lambda^{-1}$ lie in the open interval
    $(\lambda^{-1}, \lambda)$.
  \end{enumerate}
\end{proposition}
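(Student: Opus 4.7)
The plan is to take $p(x) = \det(xI - \psi^*)$, the characteristic polynomial of the action $\psi^* : H^1(S_g;\R) \to H^1(S_g;\R)$, and to read off each claim by combining basic linear algebra with two inputs: \Cref{prop:anti-simplectic-charpoly} for the skew-reciprocity and McMullen's \cite[Theorem~5.3(1)]{McMullen03a} applied to $\psi^2$ for the spectral bound. Since $\dim H^1(S_g;\R) = 2g$, part (\ref{item:degree}) is immediate; monicity and integrality are automatic because $\psi^*$ preserves $H^1(S_g;\Z)$, so $p(0) = \det(\psi^*)$ is $\pm 1$ a priori.

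The central observation is that $\psi^*$ is anti-symplectic with respect to the intersection form $\omega$ on $H^1(S_g;\R)$. Indeed, cup product compatibility gives $\psi^*(a\cup b) = \psi^*(a)\cup\psi^*(b)$, and since $\psi$ is orientation-reversing, it acts as $-1$ on $H^2(S_g;\R)\cong\R$, so $\psi^*\omega = -\omega$. Applying \Cref{prop:anti-simplectic-charpoly}, we get $p(x) = \epsilon x^{2g} p(-x^{-1})$ for some $\epsilon = \pm 1$. Now compute $\det(\psi^*)$ on the top exterior power: $\psi^*$ multiplies the volume form $\omega^g/g!$ by $(-1)^g$, so $\det(\psi^*) = (-1)^g$ and hence $p(0) = (-1)^g$. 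Matching leading coefficients in the skew-reciprocity relation pins down $\epsilon = (-1)^g$, establishing (\ref{item:const-coeff}) and (\ref{item:rev-poly}).

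For (\ref{item:reciprocal-root}) and (\ref{item:galois-conjugates-rev}), the transversely orientable invariant foliations $\calF^u$ and $\calF^s$ are represented by closed real 1-forms that are eigenvectors of $\psi^*$ with eigenvalues in $\{\pm\lambda^{-1}\}$ and $\{\pm\lambda\}$, respectively; because $\psi$ is orientation-reversing, exactly one of these eigenvalues carries a minus sign. Now apply McMullen's theorem to the orientation-preserving pseudo-Anosov $\psi^2$ on $S_g$ (which still has orientable invariant foliations and stretch factor $\lambda^2$): the spectral radius of $(\psi^2)^* = (\psi^*)^2$ on $H^1(S_g;\R)$ equals $\lambda^2$, which occurs as a simple eigenvalue, and all other eigenvalues have strictly smaller modulus; applying the same to $\psi^{-2}$ gives the symmetric bound below. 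Squaring then shows that at most one of $\lambda,-\lambda$ (and at most one of $\lambda^{-1},-\lambda^{-1}$) is an eigenvalue of $\psi^*$, each with multiplicity one, while all remaining eigenvalues have modulus strictly in $(\lambda^{-1},\lambda)$. The skew-reciprocity pairing $\mu\mapsto -\mu^{-1}$ then forces the two outer eigenvalues to form either $\{\lambda,-\lambda^{-1}\}$ or $\{-\lambda,\lambda^{-1}\}$. In the latter case we replace $p(x)$ by $p(-x)$, which a direct check shows still satisfies (\ref{item:degree})--(\ref{item:rev-poly}) and now has $\lambda$ as a root. This yields (\ref{item:reciprocal-root}) and (\ref{item:galois-conjugates-rev}).

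The main obstacle is the \emph{simplicity}, not merely strict maximality, of $\lambda^2$ as an eigenvalue of $(\psi^*)^2$: without it one could not exclude the possibility that $-\lambda$ or $\lambda^{-1}$ also shows up alongside $\lambda$ and $-\lambda^{-1}$. This is the analogue of the difficulty that \cite[Proposition 2.3]{StrennerSAF} resolved in the nonorientable setting during the proof of \Cref{prop:nonor-poly-properties}; here the fact that both invariant foliations are transversely orientable on an orientable surface lets us appeal to McMullen's result for $\psi^2$ directly, making the argument cleaner than its nonorientable counterpart.
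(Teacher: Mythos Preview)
Your proof is correct and follows essentially the same route as the paper: take the characteristic polynomial of the action on first (co)homology, invoke \Cref{prop:anti-simplectic-charpoly} for skew-reciprocity, apply McMullen's theorem to $\psi^2$ for the spectral bound, and replace $p(x)$ by $p(-x)$ at the end if needed. The one noteworthy difference is the computation of $\det(\psi^*)=(-1)^g$: the paper argues that the determinant is constant on the orientation-reversing coset of $\Mod(S_g)$ and then evaluates it on a single explicit reflection, whereas you observe directly that $\psi^*$ multiplies the symplectic volume form $\omega^{g}$ by $(-1)^g$, which is cleaner and avoids the auxiliary example.
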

\begin{proof}
  Let $p(x)$ be the characteristic polynomial of
  $\psi_*: H_1(S_g) \to H_1(S_g)$. Clearly,~(\ref{item:degree}) holds. 
  Similarly to the proof of~\Cref{prop:nonor-poly-properties}, we 
  obtain (\ref{item:galois-conjugates-rev}) by a reduction to the known statement for 
  orientation-preserving pseudo-Anosov mapping classes. This time, 
  we directly obtain (\ref{item:galois-conjugates-rev}) by applying
  \cite[Theorem 5.3]{McMullen03a} to the square of $\psi$.

  An orientation-reversing homeomorphism sends the intersection form on
  $H_1(S_g)$ to its negative. \Cref{prop:anti-simplectic-charpoly} implies that
  $p(x) = \pm x^{2g}p(-x^{-1})$. To decide which sign is right, we only need to
  compute the sign of the constant coefficient of $p(x)$. If the constant
  coefficient $1$, then the sign is positive. If the constant coefficient is
  $-1$, then the sign is negative. To put this in another way, we have
  \begin{equation}\label{eq:recip}
    p(x) = p(0) x^{2g}p(-x^{-1}).
  \end{equation}

  For orientation-preserving homeomorphisms, the action on homology is
  symplectic, hence its determinant is $+1$. It follows that, for fixed $g$,
  the determinant is either $+1$ for all orientation-reversing homeomorphisms
  of $S_g$ or $-1$ for all orientation-reversing homeomorphisms of $S_g$. It is
  sufficient to check only one homeomorphism to decide which one. For example,
  consider the reflection $i:S_g \to S_g$ about the plane containing the curves
  $b_i$ on \Cref{fig:homology-basis}.

  \begin{figure}[htb]
    \labellist
    \small\hair 2pt
    \pinlabel {$a_1$} [ ] at 14 37
    \pinlabel {$a_2$} [ ] at 77 37
    \pinlabel {$b_1$} [ ] at 47 45
    \pinlabel {$b_2$} [ ] at 111 44
    \endlabellist
    \centering
    \includegraphics[scale=1.0]{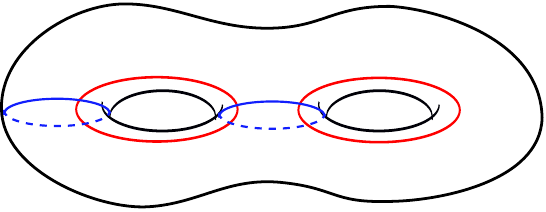}
    \caption{The standard homology basis for $S_2$.}
    \label{fig:homology-basis}
  \end{figure}

  The curves $\{a_1, \ldots, a_g, b_1, \ldots, b_g\}$ form a homology basis. We
  have $i(b_i) = b_i$ and~$i(a_i) = -a_i$ for all $i$. So the matrix of $A$ is
  a diagonal matrix whose diagonal contains $1$s and $-1$s, $g$ of each. Hence
  the determinant is $(-1)^g$ and we have shown~(\ref{item:const-coeff}).

  Item (\ref{item:rev-poly}) follows from equation (\ref{eq:recip}) and the
  fact $p(0) = (-1)^g$ we have just shown.

  Either $\lambda$ or $-\lambda$ is a root of $p(x)$. If it is $-\lambda$, we may replace
  $p(x)$ with $p(-x)$; the previously proven properties remain true. Finally,
  (\ref{item:reciprocal-root}) follows from (\ref{item:rev-poly}) by setting
  $x = \lambda$.
\end{proof}

We are now ready to prove \Cref{thm:Galois-nonor}. We emphasize that, unlike in
the previous propositions, in this theorem we are not assuming that the surface
is closed or that the pseudo-Anosov mapping class has a transversely orientable
invariant foliation.

\begin{proof}[Proof of \Cref{thm:Galois-nonor}]
  An irreducible polynomial $p(x) \in \Z[x]$ with a (complex) root~$\alpha$ on
  the unit circle is reciprocal. This is because then $\alpha^{-1}$ is also a
  root of $p(x)$, therefore $\alpha$ is a root of the polynomial
  $x^dp(x^{-1})$, where $d$ is the degree of $p(x)$. But the minimal polynomial
  is unique up constant factor, so $x^dp(x^{-1}) = \pm p(x)$. Hence~$p(x)$ is
  indeed reciprocal. So if a stretch factor $\lambda$ has a Galois conjugate on
  the unit circle, then the minimal polynomial of $\lambda$ is reciprocal and
  $\lambda^{-1}$ is also a root of the minimal polynomial.

  However, by \cite[Proposition 2.3]{StrennerSAF}, $\lambda$ and $\lambda^{-1}$
  are not Galois conjugates if $\lambda$ is a stretch factor of a pseudo-Anosov
  map (possibly with no orientable invariant foliations) on a nonorientable
  surface (possibly with punctures). This completes the proof in the case when
  the pseudo-Anosov map is supported on a nonorientable surface.

  We now prove the orientation-reversing case. If our surface is closed, then,
  by \Cref{prop:reversing-poly-properties}, $\lambda$ and $\lambda^{-1}$ are
  not Galois conjugates if $\lambda$ is a stretch factor of an
  orientation-reversing pseudo-Anosov map with orientable invariant foliations.
  If the foliations are not orientable, we can lift the map to the orientation
  double cover of the foliations to obtain an orientation-reversing
  pseudo-Anosov map with orientable invariant foliations and with the same
  stretch factor. Therefore $\lambda$ and $\lambda^{-1}$ are not Galois
  conjugates in this case, either.

  If our surface has punctures, then we can fill in the punctures after making
  the foliations orientable to obtain a pseudo-Anosov map with the same stretch
  factor on a closed surface, reducing to the closed case discussed in the
  previous paragraph. This completes the proof in the case when the
  pseudo-Anosov map is orientation-reversing.
\end{proof}

\section{Elimination of polynomials}
\label{sec:poly-elimination}

In this section we first prove bounds on the sum of $k$th powers of roots of a
polynomial when the absolute values of the roots are bounded by some $r > 1$.
These bounds are improved versions of Lemma~A.1.~of \cite{LanneauThiffeault11},
using the special properties of the polynomials in
\Cref{prop:nonor-poly-properties,prop:reversing-poly-properties}.

Then we describe how we use this lemma and
\Cref{prop:nonor-poly-properties,prop:reversing-poly-properties} in order to
systematically narrow down the set of
possible minimal polynomials of the minimal stretch factors.

\subsection{Power sum bounds}

We begin by proving two elementary lemmas.

\begin{lemma}\label{lemma:bounding-sum}
  Suppose $r > 1$ and $r^{-1} \le a_1, \ldots, a_d \le r$ are positive real numbers such that $a_1\cdots a_d = 1$. Then
  \begin{displaymath}
    \sum_{i=1}^d a_i \le
    \begin{cases}
      n(r + r^{-1}) & \mbox{if $d=2n$ is even}\\
      n(r + r^{-1}) + 1 & \mbox{if $d=2n+1$ is odd.}
    \end{cases}
  \end{displaymath}
\end{lemma}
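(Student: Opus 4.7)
The plan is to treat this as a constrained optimization problem on the compact feasible set $F = \{(a_1,\ldots,a_d) \in [r^{-1}, r]^d : \prod_i a_i = 1\}$ and show that a maximizer has a very rigid structure. Since $F$ is compact and nonempty (it contains $(1,\ldots,1)$), the continuous function $\sum_i a_i$ attains a maximum on $F$. It then suffices to compute the maximum value.

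The heart of the proof is a convexity/exchange argument. I would show that at any maximizer, \emph{at most one} coordinate lies in the open interval $(r^{-1}, r)$. The reason is that for fixed product $ab = c$ with $a, b > 0$, the function $a \mapsto a + c/a$ has second derivative $2c/a^3 > 0$, hence is strictly convex, so its maximum on any closed subinterval of $(0, \infty)$ is attained only at an endpoint. Consequently, if two coordinates $a_i, a_j$ were both strictly in $(r^{-1}, r)$, one could keep $a_i a_j$ fixed and slide one of them toward the boundary of $[r^{-1}, r]$, strictly increasing $a_i + a_j$ while remaining in $F$, contradicting maximality.

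Once the structure of a maximizer is pinned down, the proof finishes with a short case analysis. Let $k$ be the number of coordinates equal to $r$, let $\ell$ be the number equal to $r^{-1}$, and let $a$ denote the one possible interior coordinate (taking $a = 1$ formally if there is none). The constraint $\prod_i a_i = 1$ becomes $r^{k - \ell} a = 1$. If there is no interior coordinate, this forces $k = \ell$, so $d = 2n$ is even and $\sum_i a_i = n(r + r^{-1})$. If there is an interior coordinate $a$, then $a = r^{\ell - k}$, and $a \in (r^{-1}, r)$ forces $k = \ell$, whence $a = 1$, $d = 2n+1$ is odd, and $\sum_i a_i = n(r + r^{-1}) + 1$. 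In either case the stated bound is in fact attained.

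The main obstacle is essentially bookkeeping rather than depth: the exchange argument is standard, so the only real care is to verify that the convexity gives \emph{strict} improvement (so that we truly obtain a contradiction and conclude at most one interior coordinate), and then to match the two parity cases against the product constraint. Note that a byproduct of the argument is that the inequality is sharp, with equality realized by taking $n$ variables equal to $r$, $n$ equal to $r^{-1}$, and (in the odd case) one equal to $1$.
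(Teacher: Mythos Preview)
Your proof is correct and follows essentially the same approach as the paper: both use the convexity of $a \mapsto a + c/a$ (equivalently, the monotonicity of $x \mapsto x + x^{-1}$ for $x \ge 1$) to push all but at most one coordinate to the boundary $\{r^{-1}, r\}$, and then finish with the same product-constraint case analysis. The only cosmetic difference is that you invoke compactness to get a maximizer and argue by contradiction, whereas the paper phrases it as an explicit finite improvement procedure; the substance is identical.
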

\begin{proof}
  The function $x \mapsto x + x^{-1}$ is increasing on the interval $x \ge 1$.
  So if there are~$i\ne j$ so that $r^{-1} < a_i, a_j < r$, then we can
  increase the sum by moving $a_i$ and~$a_j$ away from each other by keeping
  their product unchanged, until at least one of them is $r^{-1}$ or $r$. After
  every such operation, the number of $a_i$ that are equal to~$r^{-1}$ or~$r$
  increases. So eventually we get to a point where at most one $a_i$ is not
  $r^{-1}$ or~$r$. When~$d=2n$, no such $a_i$ can exist, and exactly half of
  the $a_i$ equal $r$, the other half~$r^{-1}$, otherwise their product would not
  be 1. When $d=2n+1$, exactly one such$a_i$ exists, it equals 1 and exactly
  half of the remaining $a_i$ equal $r$, the other half $r^{-1}$.
\end{proof}

\begin{lemma}\label{lemma:bounding-diff}
  Suppose $r > 1$ and $a_1, \ldots, a_d$ are positive real numbers such
  that $r^{-1} \le a_1 \le \ldots \le a_d \le r$ and $a_1\cdots a_d = 1$ and
  $a_1 \ge a_d^{-1}$. Then
  \begin{displaymath}
    a_d - \sum_{i=1}^{d-1} a_i \ge
    \begin{cases}
      \min \{2-2n, -(n-2)r - nr^{-1}\} & \mbox{if $d=2n$ is odd.}\\
      \min \{1-2n, -(n-2)r -1- nr^{-1}\} & \mbox{if $d=2n+1$ is odd.}
    \end{cases}
  \end{displaymath}
  Moreover, the inequalities are strict if $a_1 > a_d^{-1}$.
\end{lemma}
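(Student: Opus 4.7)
The plan is to reduce to a single-variable optimization over $a := a_d$. First observe that $a \in [1, r]$: the upper bound is given, and $a \ge 1$ follows from $a_1 \le a_d$ together with $a_1 \ge a_d^{-1}$. Since $a \le r$, the effective lower bound on each $a_i$ is $\max(r^{-1}, a_d^{-1}) = 1/a$, so the ordering hypothesis combined with $a_1 \ge a_d^{-1}$ gives $a_i \in [1/a, a]$ for all $i$. For each fixed $a$, I would compute the maximum $M(a)$ of $\sum_{i=1}^{d-1} a_i$ subject to $\prod_{i=1}^{d-1} a_i = 1/a$ and $a_i \in [1/a, a]$, and then minimize $a - M(a)$ over $a \in [1, r]$.

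The key tool is a smoothing argument analogous to the one in the proof of \Cref{lemma:bounding-sum}, but now exploiting that for fixed product $xy = c$ the sum $x + y$ is \emph{minimized} at $x = y = \sqrt{c}$, so moving two coordinates apart along a hyperbola of constant product strictly increases their sum. Iterating on any pair lying strictly inside $(1/a, a)$ shows that at the maximum, at most one of $a_1, \dots, a_{d-1}$ is in the open interval $(1/a, a)$. Let $p$ be the number equal to $a$, $q$ the number equal to $1/a$, and let $\iota \in \{0, 1\}$ indicate whether an interior term $c \in (1/a, a)$ is present; then $p + q + \iota = d - 1$ and $a^{p - q} c^{\iota} = 1/a$. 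A parity analysis pins down the configuration: when $d = 2n$ there is no interior term, with $p = n - 1$ and $q = n$, giving $M(a) = (n - 1) a + n/a$; when $d = 2n + 1$ parity forces $\iota = 1$ and $q - p = 1$, hence $c = a^{q - p - 1} = 1$, giving $M(a) = (n - 1) a + n/a + 1$.

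Substituting, $g(a) := a - M(a) = (2 - n) a - n/a + \varepsilon$ with $\varepsilon = 0$ when $d$ is even and $\varepsilon = -1$ when $d$ is odd. Since $g''(a) = -2 n/a^3 < 0$, the function is strictly concave on $[1, r]$, so its minimum is attained at an endpoint; evaluating gives $g(1) = 2 - 2 n + \varepsilon$ and $g(r) = -(n - 2) r - n/r + \varepsilon$, which are precisely the two quantities in the claimed bound. For the strict inequality, note that at both minimizing configurations $a_1 = a_d^{-1}$: at $a = 1$ all $a_i$ equal $1$, and at $a = r$ the $n$ copies at $1/r$ include $a_1$. Conversely, if $a_1 > a_d^{-1}$, the product constraint together with $a \ge 1$ prevents all of $a_2, \dots, a_{d-1}$ from simultaneously equaling $a$, so some $a_j$ with $j \ge 2$ satisfies $a_j < a$, and one smoothing step applied to $(a_1, a_j)$ strictly increases $\sum a_i$, proving the current configuration is not a minimizer.

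The main obstacle is the case analysis in the second step: for even $d$ I need to confirm that introducing an interior term cannot beat the pure-boundary maximum, while for odd $d$ I need to show that no pure-boundary configuration satisfies the product constraint and that the unique interior term must equal $1$. Both issues reduce to tracking the parity of $p - q$ against $p + q = d - 1 - \iota$.
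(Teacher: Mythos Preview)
Your proposal is correct and follows essentially the same route as the paper: fix $a_d$, maximize $\sum_{i<d} a_i$ by the push-apart smoothing, obtain $g(a_d)=-(n-2)a_d-na_d^{-1}+\varepsilon$, and then use concavity on $[1,r]$ to reduce to the two endpoint values. The paper's write-up simply asserts the outcome of the smoothing, whereas you spell out the parity bookkeeping that forces $p=n-1$, $q=n$ (and $c=1$ in the odd case); your strict-inequality argument is likewise a slightly more explicit version of the paper's one-line observation that the optimal configuration must contain an $a_i=a_d^{-1}$.
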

\begin{proof}
  Similarly to the proof of \Cref{lemma:bounding-sum}, our approach is to change
  the numbers $a_1, \ldots, a_{d-1}$ to increase $\sum_{i=1}^{d-1} a_i$ as much
  as possible while keeping the hypotheses true. Whenever there are $i\ne j$
  such that $a_d^{-1} < a_i, a_j, < a_d$, we push $a_i$ and $a_j$ apart until
  at least one of them equals $a_d^{-1}$ or $a_d$. The end result is the same
  as before, so we have
  \begin{displaymath}
    a_d - \sum_{i=1}^{d-1} a_i \ge a_d - ((n-1)a_d + n a_d^{-1}) = -(n-2)a_d - na_d^{-1}.
  \end{displaymath}
  when $d = 2n$ and
  \begin{displaymath}
    a_d - \sum_{i=1}^{d-1} a_i \ge a_d - ((n-1)a_d + 1 + n a_d^{-1}) = -(n-2)a_d -1- na_d^{-1}
  \end{displaymath}
  when $d = 2n+1$. Since the function $x \mapsto -(n-2)x - nx^{-1}$ is concave,
  its minimum on the interval $[1,r]$ is taken at one of the endpoints.

  The inequalities in the case $a_1 > a_d^{-1}$ are strict, since in the
  optimal distribution there has to be an $a_i$ that takes the value $a_d^{-1}$.
\end{proof}

Now we apply \Cref{lemma:bounding-sum,lemma:bounding-diff} for roots of
polynomials.

\begin{cor}\label{cor:power-sum-improved-bounds}
  Suppose $P(x)$ is a monic polynomial of degree $d$ with constant coefficient
  $\pm1$. Let $z_1, \ldots, z_d$ be the roots of $P(x)$ and let
  \begin{displaymath}
    p_k = z_1^k + \cdots + z_d^k
  \end{displaymath}
  the $k$th power sum of the roots.

  Suppose there is a root $\lambda > 1$ such that all the other roots
  have absolute values in the interval $[\lambda^{-1},\lambda]$. For any
  $r>\lambda$, we have
  \begin{displaymath}
    \min \{2-2n, -(n-2)r^k - nr^{-k}\} \le p_k \le n(r^k + r^{-k})
  \end{displaymath}
  if $d = 2n$ is even and
  \begin{displaymath}
    \min \{1-2n, -(n-2)r^k -1-nr^{-k}\} \le p_k \le n(r^k + r^{-k}) + 1
  \end{displaymath}
  if $d = 2n+1$ is odd.

  Moreover, strict inequality holds in the lower bound when no eigenvalue
  equals $\lambda^{-1}$.
\end{cor}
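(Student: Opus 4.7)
The plan is to reduce to \Cref{lemma:bounding-sum} and \Cref{lemma:bounding-diff} by passing from the complex roots $z_i$ to the positive real numbers $a_i := |z_i|^k$. First I would check that the $a_i$ satisfy the hypotheses of both lemmas with the lemmas' parameter $r$ replaced by $r^k$: since $P(x)$ is monic of degree $d$ with constant coefficient $\pm 1$, we have $\prod_i z_i = \pm 1$ and hence $\prod_i a_i = 1$, while the condition $|z_i| \in [\lambda^{-1},\lambda]$ combined with $r > \lambda$ forces $a_i \in [r^{-k}, r^k]$.

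The upper bound then follows from the triangle inequality $p_k \le \sum_i |z_i|^k = \sum_i a_i$ and a direct application of \Cref{lemma:bounding-sum}. For the lower bound, I would isolate the contribution of $\lambda$ itself. After re-indexing so that $z_1 = \lambda$, the triangle inequality applied to the remaining summands yields
\[
  p_k = \lambda^k + \sum_{i>1} z_i^k \;\ge\; \lambda^k - \sum_{i>1} |z_i|^k.
\]
Because $\lambda$ realizes the maximum modulus, sorting the $a_i$ in increasing order produces $a_{(d)} = \lambda^k$, so the right-hand side equals $a_{(d)} - \sum_{j<d} a_{(j)}$; the auxiliary hypothesis $a_{(1)} \ge a_{(d)}^{-1}$ of \Cref{lemma:bounding-diff} is automatic, since $a_{(1)} \ge \lambda^{-k} = a_{(d)}^{-1}$. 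Applying \Cref{lemma:bounding-diff} with $r$ there replaced by $r^k$ then delivers the claimed lower bound.

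For the strict-inequality clause, the key observation is that $a_{(d)} = \lambda^k$ lies strictly in the open interval $(1, r^k)$ because $1 < \lambda < r$. The function $x \mapsto -(n-2)x - n x^{-1}$ featuring in the proof of \Cref{lemma:bounding-diff} is strictly concave on $[1, r^k]$, so its value at any interior point strictly exceeds the minimum over the two endpoints; this already upgrades the last link in the chain of inequalities to a strict one. The ``moreover'' clause of \Cref{lemma:bounding-diff} provides additional strictness under the hypothesis that no eigenvalue equals $\lambda^{-1}$, modulo a brief check of the edge case where some $z_i$ has modulus $\lambda^{-1}$ but $z_i \ne \lambda^{-1}$ (so $z_i$ is either $-\lambda^{-1}$ or a non-real complex number); a short case analysis on whether $z_i^k = -|z_i|^k$ resolves this. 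I expect this edge-case bookkeeping to be the only, and fairly minor, obstacle, since otherwise everything is a clean reduction to the two elementary lemmas.
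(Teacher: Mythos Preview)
Your proposal is correct and follows essentially the same approach as the paper: set $a_i = |z_i|^k$, verify the hypotheses of \Cref{lemma:bounding-sum} and \Cref{lemma:bounding-diff} with parameter $r^k$, bound $p_k$ above by $\sum a_i$ and below by $a_{(d)} - \sum_{j<d} a_{(j)}$ via the triangle inequality, and invoke the two lemmas. Your treatment of the strict-inequality clause is in fact more explicit than the paper's (which simply cites the lemmas); your concavity observation---that $a_d = \lambda^k$ lies strictly inside $(1,r^k)$ because $r>\lambda$, so the strictly concave function $x\mapsto -(n-2)x - nx^{-1}$ strictly exceeds its endpoint minimum there---already yields strictness unconditionally, making the edge-case bookkeeping you mention unnecessary.
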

\begin{proof}
  Let $z_1,\ldots,z_d$ be the roots of $P(x)$ and let $a_i = |z_i|^k$ for every
  $i$. Note that $a_1\cdots a_d = 1$ and $r^{-k} \le a_1, \ldots, a_d \le r^k$.
  Assuming $a_1 \le \cdots \le a_d$, we have $a_d = \lambda^k$. Since
  \begin{displaymath}
    a_d - \sum_{i=1}^{d-1} a_i \le p_k = z_1^k + \cdots + z_{d-1}^k + \lambda^k \le \sum_{i=1}^d a_i,
  \end{displaymath}
  the bounds follow from \Cref{lemma:bounding-sum} and \Cref{lemma:bounding-diff}
\end{proof}

\subsection{Newton's formulas}

In this section, we recall Newton's formulas that relate power sums of the
roots to the coefficients of the polynomial.

We will use the notation
\begin{displaymath}
  P(x) = x^d - c_1x^{d-1} - \cdots -c_{d-1}x \pm 1
\end{displaymath}
for the coefficients of monic polynomials of degree $d$. As in the statement of
\Cref{cor:power-sum-improved-bounds}, we denote by $p_k$ the $k$th power sum
of the roots of $P(x)$.

Newton's formulas relating power sums and symmetric polynomials can be stated
either as
\begin{equation}
  \label{eq:expressing-power-sum}
  p_k = - c_1p_{k-1} - c_2p_{k-2}- \cdots -  c_{k-1}p_1 - kc_k
\end{equation}
or as
\begin{equation}
  \label{eq:expressing-coeff}
  c_k = \frac{-c_1p_{k-1} - c_2p_{k-2}- \cdots -  c_{k-1}p_1 - p_k}k
\end{equation}
for all $1\le k \le d-1$.

As Lanneau and Thiffeault point out in Section~A.1 of \cite{LanneauThiffeault11},
is it more computationally efficient to bound the power sums $p_i$ and using
Newton's formulas to compute the coefficients $c_k$ from the $p_i$ than to
bound the coefficients directly. This is because many scenarios get ruled
out just because the numerator in equation (\ref{eq:expressing-coeff}) is not divisible
by~$k$.

\subsection{The polynomial elimination algorithm}
\label{sec:algorithm}

We give a lower bound on the minimal stretch factor $\delta^+(N_g)$ by a
systematic elimination of polynomials. We describe this process below. In order
to illustrate the effect of each step in the algorithm, we give the number of
candidate polynomials left after each step when $g=12$ (when the degree is 11).

\begin{algorithm}\label{alg}
  Let $d \ge 4$, $g = d-1$ and $r > 1$ such that $\delta^+(N_g) < r$. Perform
  the following steps in order to obtain a small set of polynomials of degree
  $g$ that include one polynomial whose root is $\delta^+(N_g)$:
  \begin{enumerate}
  \item\label{item:power-sum-bounds} Compute the possible values of
    $p_1, \ldots, p_{d-1}$ using the bounds given by
    \Cref{cor:power-sum-improved-bounds}. For $d=11$, the total number of
    combinations is
    $20 \cdot 20\cdot 21\cdot 23\cdot 24\cdot 27\cdot 30\cdot 34\cdot 38\cdot
    43 = 10,641,541,131,648,000$.
  \item\label{item:compute-coeffs} Compute the coefficients
    $c_1, \ldots, c_{d-1}$ using \Cref{eq:expressing-coeff}, keeping only the
    cases when all $c_i$ are integers. 57,643,952 cases remain.
  \item\label{item:mod-two} Discard all cases where the polynomial is not
    reciprocal mod 2. 1,808,922 cases remain.
  \item\label{item:add-constant-coeff} Try $\pm 1$ for the constant
    coefficient. We now doubled the number of cases to 3,617,844.
  \item\label{item:backward-powers} Consider the reciprocal polynomial
    $P^*(x) = \pm x^dP(x^{-1})$ (with the sign chosen so that the polynomial is
    monic), and use \Cref{eq:expressing-power-sum} to compute the power sums~
    $p^*_1, \ldots, p^*_{d-1}$ of this polynomial from the reversed sequence~
    $\pm c_{d-1}, \ldots, \pm c_1$ of coefficients, where the signs here depend
    on the sign chosen in the previous step. Discard the cases that do not
    satisfy the bounds of \Cref{cor:power-sum-improved-bounds}. 5075 cases
    remain.
  \item\label{item:newton} Test the remaining polynomials by Newton's method
    for finding roots. Start with the upper bound for the Perron root. Since
    the polynomial is increasing and convex in $[\lambda, \infty)$, we should
    get a decreasing sequence of $x$-values larger than~$\lambda$. Discard the
    cases when this fails. Stop when two consecutive $x$-values are very close
    to each other. 421 cases remain.
  \item Discard the polynomials where the largest eigenvalue in absolute value
    is not real. 86 cases remain.
  \item Discard the cases where the multiplicity of the largest eigenvalue is
    larger than 1. 54 cases remain.
  \item Discard the cases when there is a root with absolute value less than or
    equal to~$\lambda^{-1}$. 33 cases remain.
  \item Discard the cases where the largest eigenvalue is larger than our upper
    bound. 1 case remains.
  \end{enumerate}
\end{algorithm}

In practice, the first three steps are implemented in a more sophisticated way.
Our computers cannot handle as many as $10,641,541,131,648,000$ cases, so the
implementation does not actually consider all those combinations. It first
chooses a value for~$p_1$ and sets $c_1 = -p_1$ by (\ref{eq:expressing-coeff}).
Then it has to choose a value for $p_2$ of the same parity as~$p_1$, since
$c_2 = -\frac{c_1p_1-p_2}{2}$. Similarly, the value chosen for $p_3$ is then
determined mod 3, therefore a huge number of combinations for the $p_i$ are
never considered. Also, once more than half of the $c_i$ are computed, we
obtain additional constraints on the $p_i$, since our polynomial has to be
reciprocal mod 2. Since these divisibility checks are done early, and not after
the whole polynomial is constructed, a huge number of cases gets eliminated
early.

The idea of bounding the coefficients using power sums as in steps
(\ref{item:power-sum-bounds}) and (\ref{item:compute-coeffs}) instead of using
symmetric polynomials to express the coefficients in terms of the roots is due
to Lanneau and Thiffeault \cite{LanneauThiffeault11}. As the numbers suggest,
these are the steps that are responsible for bringing the size of the set of
possible polynomials down from an astronomical size to one that is approachable
by computers.

Step (\ref{item:mod-two}) is special to nonorientable surfaces and is also
crucial. Without this step, not only would the searching process be much
slower, but there are quite a few polynomials that pass all the other tests but
this is the only step that eliminates them. Perhaps this step is the main
reason for why we do not need Lefschetz number tests unlike Lanneau and
Thiffeault in the orientable case.

Step (\ref{item:backward-powers}) is also special to nonorientable surfaces,
since in the orientable case the polynomials are reciprocal, so the reciprocal
polynomial does not contain any additional information. This was one of the
last tests we added, and this reduced the running time of the algorithm for the
$d=11$ case from several hours to a few minutes. The reason this works so
effectively is that in most of the coefficient sequences at this point, the
last few coefficients ($c_{d-1}$, $c_{d-2}$, etc.) are much bigger than the
required bounds.

Step (\ref{item:newton}) is another computationally inexpensive test that
quickly eliminates a large fraction of the polynomials. The idea of this test
is also due to Lanneau and Thiffeault.

The most computationally expensive part is computing the roots. We only compute
the roots after Step (\ref{item:newton}), only in 421 cases. So in terms of
total time, actually steps (\ref{item:power-sum-bounds})--(\ref{item:newton}) take
more than 99\% of the running time.

We use a very similar algorithm in order to give a lower bound for the minimal
stretch factor~$\delta^+_{rev}(S_g)$ among orientation-reversing pseudo-Anosov
maps. The difference is that we use the properties from
Proposition~\ref{prop:reversing-poly-properties} instead of the ones from
Proposition~\ref{prop:nonor-poly-properties}. Our implementation of these
algorithms can be found at
\href{https://github.com/b5strbal/polynomial-filtering}{\texttt{https://github.com/b5strbal/polynomial-filtering}}.

\subsection{Minimal stretch factors}
\label{sec:final-section}

We are now ready to single out the minimal stretch factor $\delta^+(N_g)$ among
pseudo-Anosov homeomorphisms with an orientable invariant foliation for certain
nonorientable closed surfaces~$N_g$. \Cref{theorem:stretch_factors_nonor} is a
direct consequence of \Cref{cor:construction-nonor} and
\Cref{prop:elimination-nonor} below.

\begin{proposition}\label{prop:elimination-nonor}
  Let $g$ and $r$ be as in one of the rows in the table below. Let $f$ be a
  pseudo-Anosov mapping class with an orientable invariant foliation on $N_g$
  whose stretch factor $\lambda$ is smaller than $r$. Then $\lambda$ must be a
  root of the polynomial shown in the table.

  In the cases where no polynomial is given, we have indicated to how many
  polynomials we were able to restrict the list of candidate polynomials.
  \begin{center}
    \begin{tabular}{c|c|c|c}
      $g$ & $r$ & Polynomial candidates & largest root \\
      \hline
      4 & 1.84 & $x^3 - x^2 - x - 1$ & 1.83929 \\
      5 & 1.52 & $x^4 - x^3 - x^2 + x - 1$ & 1.51288\\
      6 & 1.43 & $x^5 - x^3 - x^2 - 1$ & 1.42911\\
      7 & 1.422 & $x^6 - x^5 - x^3 + x - 1$ & 1.42198\\
      8 & 1.2885 & $x^7 - x^4 - x^3 - 1$ & 1.28845\\
      9 & 1.3568 & \emph{18 candidates} & \\
      10 & 1.2173 & $x^9 - x^5 - x^4 - 1$ & 1.21728\\
      11 & 1.22262 & \emph{5 candidates}& \\
      12 & 1.1743 & $x^{11} - x^6 - x^5 - 1$ & 1.17429 \\
      13 & 1.2764 & \emph{288 candidates} & \\
      14 & 1.14552 & $x^{13} - x^7 - x^6 - 1$ & 1.14551\\
      15 & 1.1875 & \emph{84 candidates} & \\
      16 & 1.1249 & $x^{15} - x^8 - x^7 - 1$ & 1.12488 \\
      17 & 1.1426 & \emph{16 candidates} & \\
      18 & 1.10939 & $x^{17} - x^9 - x^8 - 1$ & 1.10938\\
      20 & 1.09731 & $x^{19} - x^{10} - x^9 - 1$ & 1.09730 \\
    \end{tabular}
  \end{center}
\end{proposition}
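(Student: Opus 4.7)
The plan is to apply Algorithm~\ref{alg} (or its by-hand version) once for each row of the table. The inputs are the structural constraints on the polynomial $p(x)\in\Z[x]$ provided by Proposition~\ref{prop:nonor-poly-properties}: it is monic of degree $g-1$, has constant coefficient $\pm 1$, is reciprocal modulo $2$, and all its roots other than the putative Perron root $\lambda$ lie strictly inside the open annulus $\lambda^{-1}<|z|<\lambda$. Combined with the power-sum bounds of Corollary~\ref{cor:power-sum-improved-bounds} applied at threshold $r$, every candidate polynomial is encoded by a tuple $(p_1,\ldots,p_{g-2})$ of integer power sums lying in explicit intervals, subject to the parity and divisibility constraints forced by Newton's identity~\eqref{eq:expressing-coeff}.

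For the smallest case $g=4$, the polynomial has degree~$3$ and the enumeration is tiny: the bound $r=1.84$ restricts $p_1$ and $p_2$ to short intervals of integers, and for each such pair one recovers $c_1,c_2$ from Newton's formulas, appends a constant $\pm 1$, and checks the mod-$2$ reciprocity and annulus conditions directly. This case can be done on paper. For $g=5,6$ the enumeration is still small enough to tabulate, but it is already more convenient to delegate to the implementation. For $g\ge 7$ computer assistance is essential.

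The principal obstacle is the size of the raw search space: as noted in Section~\ref{sec:algorithm}, for $g=12$ the naive product of the power-sum ranges is of order $10^{16}$, far too large to enumerate exhaustively. The fix is to interleave the tests of Algorithm~\ref{alg}, so that the divisibility conditions coming from~\eqref{eq:expressing-coeff}, the mod-$2$ reciprocity test, and the auxiliary power-sum test on the reversed coefficient sequence (Step~\ref{item:backward-powers}) are applied as soon as enough of the $p_i$ are known, pruning most branches early. With these optimizations the survivor count after each intermediate step drops rapidly, so that the final Newton-iteration screening and numerical root computation are performed on at most a few hundred polynomials per row; the output then matches the lists in the table.

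For the rows where a single polynomial remains, this polynomial is precisely the minimal polynomial of the Penner example $f_{n,k}$ constructed in Corollary~\ref{cor:construction-nonor} for the same $g$, so combining this proposition with that corollary yields the corresponding row of Theorem~\ref{theorem:stretch_factors_nonor}. The rows where only a list of candidates remains are not further reduced here; they furnish the evidence for Conjecture~\ref{conj:nine_eleven} to be discussed in Section~\ref{sec:final-section}.
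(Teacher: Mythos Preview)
Your proposal is correct and follows essentially the same approach as the paper: run Algorithm~\ref{alg} with the constraints of Proposition~\ref{prop:nonor-poly-properties} and the bounds of Corollary~\ref{cor:power-sum-improved-bounds}, relying on computer assistance except in the smallest case. The only difference is one of detail: the paper actually carries out the $g=4$ enumeration explicitly (listing the admissible $(p_1,p_2)$ pairs, writing down the twelve resulting cubics, and eliminating all but $x^3-x^2-x-1$ via irreducibility and root-location checks), whereas you only assert that this can be done on paper.
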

\begin{proof}
  The proof consists of running \Cref{alg} and is computer-assisted. However,
  we will prove the proposition by hand in genus 4. We follow the first four
  steps explicitly, then (since only a handful of polynomials remain) we finish
  the proof with an ad hoc but simple argument.

  Step (\ref{item:power-sum-bounds}): we have
  \begin{displaymath}
    -1 = \min\{-1,1.84 - 1 - 1/1.84\} < p_1 < 1.84 + 1 + 1/1.84 \approx 3.38,
  \end{displaymath}
  therefore the possible values for $p_1$ are 0, 1, 2 and 3. We have
  \begin{displaymath}
    -1 = \min\{-1,1.84^2 - 1 - 1/1.84^2\}< p_2 < 1.84^2 + 1 + 1/1.84^2 \approx 4.68,
  \end{displaymath}
  so the possible values for $p_2$ are 0, 1, 2, 3 and 4.

  Step (\ref{item:compute-coeffs}): By (\ref{eq:expressing-coeff}), we have
  $c_1 = -p_1$ and $c_2 = \frac{p_1^2-p_2}2$, therefore $p_1$ and $p_2$ have
  the same parity. Hence the possible pairs are
  $(0,0)$, $(0,2)$, $(0,4)$, $(1,1)$, $(1,3)$, $(2,0)$, $(2,2)$, $(2,4)$, $(3,
  1)$ and $(3,3)$.

  Step (\ref{item:mod-two}): The pair $(p_1, \frac{p_1^2-p_2}2)$ also has the
  same parity, since our polynomial is reciprocal mod 2. That leaves the
  choices $(0,0)$, $(0,4)$, $(1,3)$, $(2,0)$, $(2,4)$, $(3,3)$ for $(p_1,p_2)$.

  Step (\ref{item:add-constant-coeff}): We construct the list of possible polynomials.
  \begin{enumerate}
  \item $x^3 \pm 1$
  \item $x^3 -2x \pm 1$
  \item $x^3 - x^2-x\pm 1$
  \item $x^3-2x^2+2x\pm 1$
  \item $x^3-2x^2 \pm 1$
  \item $x^3-3x^2+3x\pm 1$
  \end{enumerate}
  The polynomial has to be irreducible, since the degree of a stretch factor on
  a nonorientable surface is at least three \cite[Proposition
  8.7.]{StrennerDegrees}. The polynomials where neither 1 nor $-1$ are roots are
  $x^3-x^2-x-1$, $x^3-2x^2+2x+1$, $x^3-2x^2-1$ and $x^3-3x^2+3x+1$. The second
  and fourth polynomial do not have a positive real root, and the third
  polynomial has a root that is approximately 2.2. That leaves us with
  $x^3-x^2-x-1$.
\end{proof}

We have stopped at genus 20 because of computational difficulties. The genus 18
case took about half a day to run on a single computer. In the genus 20 case
the algorithm took about a day to complete when run parallel on 30
computers. We estimate that the genus 22 case would need to run for a few
months on the same cluster of computers.

In the odd genus cases, the issue is not the running time, but the fact that
our tests are not good enough to eliminate all polynomials that should be
eliminated. In the hope of dealing with more odd genus cases, we have also
implemented the Lefschetz number tests used by Lanneau and Thiffeault
\cite[Section 2.3]{LanneauThiffeault11}. These tests help eliminate a large
percentage of the remaining polynomials, but, unfortunately, not all. \Cref{tab:noLefschetz} 
below shows the polynomials that we could not eliminate in the genus
9, 11 and 13 cases, in addition to the polynomials that we have constructed in
\Cref{cor:construction-nonor}.

\begin{table}[ht]
\begin{center}
  \begin{tabular}[ht]{c|c|c|c}
    $g$ & Polynomial & Stratum &  Stretch factor \\
    \hline
    9 & $(x^7 - x^4 - x^3 - 1)(x - 1)$ & $(4^7)$ & 1.28845 \\
    & $(x^7 - x^5 - x^2 - 1)(x - 1)$ & $(4^7)$ & 1.30740 \\
    \hline
    11 & $(x^9 - x^5 - x^4 - 1)(x - 1)$ & $(4^9)$ & 1.21728 \\
    \hline
    13 & 18 polynomials & & 
  \end{tabular}
  \caption{The polynomials and possible strata in genus 9, 11 that we cannot
    rule out using \Cref{alg} and Lefschetz arguments.
    The notation $a^b$ means an orbit of length $b$ consisting of $a$-pronged
    singularities. }
    \label{tab:noLefschetz}
\end{center}
\end{table}

Most of the polynomials that we are not able to eliminate are products of
polynomials that appear in some lower genus and cyclotomic polynomials. We
think that these polynomials should be possible to eliminate, but we do not
know how. In particular, we think that in the genus 9 and 11 cases all three
remaining polynomials could be eliminated, and we conjecture that the examples
constructed in \Cref{cor:construction-nonor} are the minimal stretch factor
examples (cf.~\Cref{conj:nine_eleven}).

Similarly, in the orientation-reversing case,
\Cref{theorem:stretch_factors_rev} follows directly from
\Cref{cor:construction-reversing} and
\Cref{prop:elimination-reversing} below.

\begin{proposition}\label{prop:elimination-reversing}
  Let $g$ and $r$ be as in one of the rows in the table below. Let $f$ be an
  orientation-reversing pseudo-Anosov mapping class with orientable invariant
  foliations on $S_g$ whose stretch factor $\lambda$ is smaller than $r$. Then
  $\lambda$ must be a root of the polynomial shown in the table.
  \begin{center}
    \begin{tabular}{c|c|c|c}
      $g$ & $r$ & Polynomial candidates & largest root \\
      \hline
      1 & 1.62 & $x^2-x-1$ & 1.61803 \\
      2 & 1.62 & $x^2-x-1$ & 1.61803 \\
      3 & 1.253 & $x^8-x^5-x^3-1$ & 1.25207\\
      4 & 1.253 & $x^8-x^5-x^3-1$ & 1.25207\\
      5 & 1.16 & $x^{12}-x^7-x^5-1$ & 1.15973\\
      6 & 1.16 & $x^{12}-x^7-x^5-1$ & 1.15973\\
      7 & 1.1171 & $x^{16}-x^9-x^7-1$ & 1.11707\\
      8 & 1.1171 & $x^{16}-x^9-x^7-1$ & 1.11707\\
      9 & 1.0925 & $x^{20}-x^{11}-x^9-1$ & 1.09244\\
      10 & 1.0925 & $x^{20}-x^{11}-x^9-1$ & 1.09244\\
      11 & 1.0764 & $x^{24}-x^{13}-x^{11}-1$ & 1.07638\\
    \end{tabular}
  \end{center}
\end{proposition}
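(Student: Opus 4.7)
The plan is to mirror the proof of \Cref{prop:elimination-nonor}: for each row of the table we run a close variant of \Cref{alg}, substituting the restrictions of \Cref{prop:reversing-poly-properties} for those of \Cref{prop:nonor-poly-properties}. We let $p(x)$ be the characteristic polynomial of the action of $\psi$ on $H_1(S_g;\R)$; by that proposition it has degree $2g$, is monic with constant term $(-1)^g$, satisfies the skew-reciprocal identity $p(x) = (-1)^g x^{2g} p(-x^{-1})$, and has all roots other than $\lambda$ and $-\lambda^{-1}$ inside the open annulus $\lambda^{-1} < |z| < \lambda$. The last property allows us to invoke \Cref{cor:power-sum-improved-bounds} to bound the power sums $p_1,\dots,p_{2g-1}$, and then Steps (\ref{item:compute-coeffs})--(\ref{item:newton}) of \Cref{alg} proceed essentially verbatim.

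The main structural difference is that the skew-reciprocal identity is an exact equality rather than the mod-$2$ congruence used in the nonorientable case. This forces $c_{2g-k} = \pm c_k$ for every $k$, so only about $g$ of the $2g-1$ nontrivial coefficients are independent. Hence we build the polynomial by choosing only the first $g$ coefficients and deducing the rest, rather than imposing the much weaker mod-$2$ test of Step (\ref{item:mod-two}). The extra fact that $-\lambda^{-1}$ is also a root gives an additional consistency check that can be exploited to prune candidates, playing a role analogous to the reversed-coefficient test in Step (\ref{item:backward-powers}). Together these observations essentially halve the effective search dimension and compensate for the degree $2g$ being larger than the degree $g-1$ that appeared in the nonorientable setting.

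For $g=1$ the proposition is handled by hand. \Cref{prop:reversing-poly-properties} forces $p(x) = x^2 + bx -1$ with $b \in \Z$, whose largest real root is $\lambda = \tfrac{1}{2}\bigl(-b + \sqrt{b^2+4}\bigr)$. The constraint $1 < \lambda < 1.62$ rules out every integer $b$ except $b = -1$, leaving only $p(x) = x^2 - x - 1$. For $g \ge 2$ the elimination is carried out by computer exactly as for \Cref{prop:elimination-nonor}, using the same code with the modifications above. The main obstacle I expect is purely computational: for $g = 11$ the polynomial degree is $24$, and even with the skew-reciprocal reduction the raw search space of power-sum tuples is large. However, since the skew-reciprocal constraint is far more restrictive than the mod-$2$ test of the nonorientable case, we expect the filtration to terminate with a single candidate per row, in analogy with the even-genus rows of \Cref{prop:elimination-nonor}; this matches the table, which in each row leaves exactly one surviving polynomial.
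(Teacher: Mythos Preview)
Your approach is essentially the same as the paper's: a computer-assisted run of a variant of \Cref{alg} with the constraints of \Cref{prop:reversing-poly-properties} in place of those of \Cref{prop:nonor-poly-properties}, exploiting the exact skew-reciprocal identity in place of the mod-$2$ reciprocity test. Your hand argument for $g=1$ is also fine.

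One point of clarification worth noting: you write that the filtration should terminate with the single polynomial listed in the table. In fact the paper reports that the algorithm does \emph{not} output the table polynomial directly. For odd $g\ge 3$ the surviving candidate is the irreducible factor of the table polynomial (the table polynomial divided by $x^2+1$); for even $g\ge 4$ the surviving candidate is that irreducible factor multiplied by $x^2-1$; and for $g=2$ it is $(x^2-x-1)(x^2-1)$. In every case $\lambda$ must be a root of the irreducible factor, hence of the table polynomial, which is how the proposition follows. This is a cosmetic rather than a conceptual discrepancy, but your expectation that the output ``matches the table'' literally is not quite what happens.
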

\begin{proof}
  Analogously to the proof of Proposition~\ref{prop:elimination-nonor},
  the proof of this statement is also computer-assisted. The algorithm used is
  a slight modification of \Cref{alg} as mentioned at the end of
  \Cref{sec:algorithm}.

  The polynomials in the table for $g\ge 3$ are not irreducible: they are
  products of $x^2+1$ and an irreducible factor. When $g\ge 3$ is odd, the
  polynomial we get as a result of the elimination process is this irreducible
  factor. When $g\ge 4$ is even, then the only polynomial left is the product
  of that irreducible factor and $x^2-1$. In either case, the stretch factor
  has to be a root of the irreducible factor, therefore a root of the
  polynomials in the table. (The reasons for why we have not listed the
  irreducible factors in the table is that they have many more terms and they do
  not show such a clear pattern as the polynomials in the table. Moreover, the
  polynomials in the table appear also in \Cref{cor:construction-reversing}.)

  Similarly, in genus 2, the polynomial remaining after the elimination process is
  $(x^2-x-1)(x^2-1)$, so the stretch factor would have to be a root of~$x^2-x-1$.
\end{proof}

By using the Lefschetz number arguments of Lanneau and Thiffeault, we think it
is possible to show that in genus 2 the only remaining polynomial, $(x^2-x-1)(x^2-1)$,
cannot actually be the characteristic polynomial of the action on the first homology for
an orientation-reversing pseudo-Anosov map with orientable invariant
foliations. This would imply that $\delta^+_{rev}(S_2) > \delta^+_{rev}(S_1)$.
Since \Cref{prop:elimination-reversing} shows a very clear pattern, we
conjecture that the stretch factor candidates in
\Cref{prop:elimination-reversing} cannot be realized for any even genus,
leading to \Cref{conj:zigzag}.

\bibliographystyle{alpha}
\bibliography{mybibfile}

\end{document}